\documentclass[12pt]{article}
\usepackage{amssymb, amsmath, hyperref}
\usepackage{amsthm}
\usepackage{bm}
\usepackage[margin=0.9in]{geometry}
\usepackage{enumerate}
\allowdisplaybreaks[1]
\numberwithin{equation}{section}


\newtheorem{notation}{Notation}[section]
\newtheorem{example}{Example}[section]
\newtheorem{thm}{Theorem}[section]
\newtheorem{cor}{Corollary}[section]
\newtheorem{note}{Note}[section]

\newtheorem{defn}{Definition}[section]

\newtheorem{lemma}{Lemma}[section]
\usepackage{graphicx}

\begin{document}
\vspace{-2cm}
\markboth{R.Vishnupriya and R. Rajkumar}{New matrices for spectral hypergraph theory, I}
\title{\LARGE\bf 
New matrices for spectral hypergraph theory, I}
\author{R. Vishnupriya\footnote{e-mail: {\tt rrvmaths@gmail.com},} \footnote{First author is supported by University Grands Commission (U.G.C.), Government of India under the Fellowship grant No. 221610053976,},~
R. Rajkumar\footnote{e-mail: {\tt rrajmaths@yahoo.co.in}.} \\
{\footnotesize Department of Mathematics, The Gandhigram Rural Institute (Deemed to be University),}\\ \footnotesize{Gandhigram -- 624 302, Tamil Nadu, India}\\[3mm]
}
\date{}
\maketitle
\begin{abstract}
	We introduce a hypergraph matrix, named the unified matrix, and use it to represent the hypergraph as a graph.  We show that the unified matrix of a hypergraph is identical to the adjacency matrix of the associated graph. This enables us to use the spectrum of the unified matrix of a hypergraph as a tool to connect the structural properties of the hypergraph with those of the associated graph.
Additionally, we introduce certain hypergraph structures and invariants during this process, and relate them to the eigenvalues of the unified matrix.\\
	\textbf{Keywords:} Hypergraphs, Unified matrix, Spectra of hypergraphs. \\
	\textbf{2010 Mathematics Subject Classification:}   05C50, 05C65, 05C76, 15A18
	
\end{abstract}

\section{Introduction} \label{sec1}
Spectral graph theory studies the properties of graphs through the eigenvalues and eigenvectors of matrices associated with the graph, such as adjacency matrix, Laplacian matrix, signless Laplacian matrix and normalized Laplacian matrix (see~\cite{bapat,chung,cvetko}). A hypergraph is a generalization of a graph where edges (also called hyperedges) can connect more than two vertices.
In the study of spectral hypergraph theory, tensors, in addition to matrices, have also been associated with hypergraphs, over the past decades.
The association of tensors with hypergraphs was made by  Bul\`{o} and Pelillo~\cite{Bulo}, Cooper and Dutle~\cite{cooper2012spectra}, Hu and Qi~\cite{Hu}, Li et al.~\cite{Li}, Xie and Chang~\cite{Xie},  Qi~\cite{Qi} and Anirban Banerjee et al.~\cite{Banarjee tensor}. For further information on the study of the spectra of tensors associated with hypergraphs, we direct the reader to~\cite{Qi tensor bk}. However,  the computational complexity of calculating eigenvalues is an NP-hard problem. Additionally, not all aspects of spectral graph theory can be seamlessly extended to spectral hypergraph theory.  These limitations are challenges when studying spectral hypergraph theory using tensors. Association of matrices with hypergraphs was made by Feng and Li~\cite{ad mat 1}, Rodr\'{\i}guez~\cite{Rod 2002},  Reff and Rusnak~\cite{reff2012}, Anirban Banerjee~\cite{banerjee2021spectrum}, and Cardoso  and Trevisan~\cite{cardoso}. Analyzing the structural properties of a hypergraph through the spectra of its associated matrix addresses the limitations inherent in tensor-based methods. In addition, the representation of a hypergraph as a weighted graph using the associated matrix has sometimes been used to derive spectral properties of the hypergraph. It is known that a graph is uniquely determined by its adjacency matrix, and similarly, a hypergraph can be uniquely determined by the associated tensors mentioned above. However, this may not hold true for the associated matrices mentioned above.

To address this, in this paper, we introduce the unified matrix for a hypergraph, which is identical to the adjacency matrix of the associated graph. This allows us to use the unified matrix's spectrum to link the structural properties of the hypergraph with those of the graph. We also introduce certain hypergraph structures and invariants, relating them to the eigenvalues of the unified matrix.

The rest of the paper is organized as follows. Section~\ref{sec2} presents some basic definitions and notations related to graphs, hypergraphs and matrices.
 In Section~\ref{sec3}, we introduce the unified matrix of a hypergraph, and define the associated graph of this matrix. Section~\ref{sec4} provides some basic results on the unified matrix and unified eigenvalues of a hypergraph. In Section~\ref{sec5}, we define new structures and invariants on hypergraphs, such as exact walk, exact path, exact cycle, unified path, unified cycle, exactly connectedness, exact girth, exact distance, and exact diameter. We also provide bounds on the exact diameter of a hypergraph using the eigenvalues of its unified matrix. Further we characterize simple hypergraphs having no odd exact cycle using the characteristic polynomial and the spectrum of its unified matrix. Section~\ref{sec6} establishes a formula for the determinant of the unified matrix, and describes some relationships between the characteristic polynomial of the unified matrix of a hypergraph and its structural properties. In Section~\ref{sec7}, we determine the spectrum of unified cycles and unified paths. Section~\ref{sec8} presents the characteristic polynomial of the unified matrix of a hypergraph constructed by some elementary hypergraph operations. In Section~\ref{sec9}, we bound the unified spectral radius of a hypergraph by some hypergraph invariants. We also bound the strong chromatic number and weak chromatic number of a hypergraph by the largest and smallest eigenvalues of its unified matrix. Finally, we bound the independence number and the complete clique number of a hypergraph using its number of non-negative, positive, and negative unified eigenvalues.


\section{Preliminaries}\label{sec2}
Let $S$ be a non-empty set. We denote the set of all non-empty subsets of $S$ by $\mathcal{P}^*(S)$.
A \textit{hypergraph} $H$ consists of a non-empty set $V(H)$ and a multiset $E(H)$ of non-empty subsets of $V(H)$.
The elements of $V(H)$; are called \textit{vertices} and the elements of $E(H)$ are called \textit{hyperedges}, or simply \textit{edges} of $H$. $H$ is said to be \textit{uniform} if all of it’s edges have the same cardinality; If it is $m$, then $H$ is said
to be \textit{$m$-uniform}. The \textit{rank} of $H$, denoted by $rank(H)$, is defined as $\displaystyle \underset{e\in E(H)}{\max}\{|e|\}$, if $E(H)\neq\Phi$; 0, otherwise.
 An edge $e$ of $H$ is called \textit{included} if there exist an edge $e'(\neq e)\in E(H)$ such that $e\subset e'$.
  The \textit{multiplicity} of an edge $e$ in $H$, denoted by $m(e)$, is the number of occurrences of that edge in $H$. An edge $e$ of $H$ is called \textit{multiple} if $m(e)\geq2$.  An edge of $H$ having cardinality one is called a \textit{loop}. The \textit{degree} of a vertex $v$ in $H$, denoted by $d_H(v)$, is the number of edges in $H$ containing $v$.
$\delta(H)$ and $\Delta(H)$ denote the minimum and the maximum of the degrees of vertices in $H$, respectively. 

$H$ is said to be \textit{simple} if it has neither loops nor multiple edges. $H$ is said to be \textit{complete} if $E(H)=\mathcal{P}^*(V(H))$.  A \textit{subhypergraph} of $H$ is a hypergraph $H'$ with $V(H')\subseteq V(H)$ and $E(H')\subseteq E(H)$. A subhypergraph $H'$ of $H$ is called \textit{induced} if all the edges of $H$ which are completely contained in $V(H')$ forms the edge set $E(H')$.  
For each $T\subseteq V(H)$, $H\backslash T$ denotes the subhypergrph of $H$ obtained from $H$ by deleting all the vertices in $T$ and all the edges $e\in E(H)$ such that $T\subseteq e$.
Let $H_1,H_2,\dots,H_r$ be mutually disjoint hypergraphs. Then the \emph{disjoint union of $H_is$}, denoted by $\dot{\bigcup}H_i$, is the hypergraph having the vertex set $\underset{i=1}{\overset{r}{\bigcup}}V(H_i)$ and the edge set $\underset{i=1}{\overset{r}{\bigcup}}E(H_i)$. 

A hypergraph $H$ is said to be a \textit{path} if its edges $e_1,e_2,\dots, e_{t}$ and some of its distinct vertices  $v_0,v_1,v_2,\dots,v_{t-1}$ can be arranged in a sequence 
$v_0, e_1,v_1, e_2, v_2,\dots,v_{t-1}, e_{t}, v_t$
in such a way that $v_{i-1},v_{i} \in e_i$ for all $i=1,2,\ldots,t$. $H$ is said to be a \textit{cycle} if its edges $e_1,e_2,\dots, e_{t}$ and some of its distinct vertices  $v_0,v_1,v_2,\dots,v_{t-1}$ can be arranged in a sequence 
$v_0, e_1,v_1, e_2, v_2,\dots,v_{t-1}, e_{t}, v_0$
in such a way that $v_{i-1},v_{i} \in e_i,$ for all $i=1,2,\ldots,t-1$
and $v_{t-1},v_{0} \in e_1$.

Now, we state some definitions and notations on graphs:  The \textit{distance} between the vertices $u$ and $v$ in a graph $G$, denoted by $d_G(u,v)$, is the length of a shortest path joining $u$ and $v$. If there is no path joining $u$ and $v$ in $G$, then $d_G(u,v)$ is defined to be $\infty$. The \textit{diameter} of a connected graph $G$ is the greatest distance between any pair of vertices in $G$.
A subgraph $G'$ of a graph $G$ is called an \textit{elementary subgraph} if each component of $G'$ is either an edge or a cycle. 
$P_n$, $C_n$ and $K_n$ denote the path, the cycle and the complete graph on $n$ vertices, respectively. The complete bipartite graph with bipartitions of cardinality $m$ and $n$ is denoted by $K_{m,n}$.

Let $G$ be a finite, loopless graph with $V(G)=\{v_1,v_2,\dots,v_n\}$.  
 The \textit{adjacency matrix} of $G$, denoted by $A(G)$, is the matrix of order $n$ whose rows and columns are indexed by the vertices of $G$ and for all $v_i, v_j\in V(G)$,
	\begin{center}
		the $(v_i, v_j)^{th}$ entry of $A(G)=$
		$\begin{cases}
		m(\{v_i,v_j\}), &\text{if}~i\neq j~\text{and}~\{v_i,v_j\}\in E(G);\\
		0,&\text{otherwise}.
		\end{cases}$ 
	\end{center}
 
We denote the identity matrix of order $n$ by $I_n$, and the all-ones matrix of size $n\times m$ by $J_{n\times m}$.  We denote $\mathbf{0}$ as the zero matrix of appropriate size. 
Let $M$ be an $m\times n$ matrix. For $S\subseteq \{1,2\dots,m\}$ and $T \subseteq \{1,2\dots,n\}$, $M[S|T]$ denotes the submatrix of $M$ determined by the rows corresponding to $S$ and the columns corresponding to $T$. If $S$ and $T$ are proper subsets, then the submatrix obtained by deleting the rows in $S$ and the columns in $T$ is denoted by $M(S|T)$. If $S=\{i\}$ and $T=\{j\}$, then we denote $M[S|T]$ and $M(S|T)$ simply by $M[i|j]$  and $M(i|j)$, respectively. $r(M)$ denotes the rank of $M$. If $M$ is a square matrix of order $n$, then the trace of $M$ is denoted by $tr(M)$, and the characteristic polynomial of $M$ is denoted by $P_M(x)$. The eigenvalues of $A$ are denoted by $\lambda_i(M)$, $i=1,2,\dots,n$. The spectrum of $M$ is the multiset of eigenvalues of $M$ and is denoted by $\sigma(M)$. 

In the rest of the paper we consider only hypergraphs having finite number of vertices.


\section{Unified matrix of a hypergraph}\label{sec3}
In this section we introduce the unified matrix of a hypergraph and we use it to associate a graph to the hypergraph.

Let $S$ be a non-empty set. If $\{S_1,S_2\}$ is a $2$-partition of $S$, then we call $S_1$ and $S_2$ as \textit{parts} of $S$. Let $\tau(S)$ denote the set of all $2$-partitions of $S$. 
\begin{defn}\normalfont
	Let $H$ be a hypergraph. Let $I(H)$ denote the set of all the parts of each edge of $H$ together with all the singleton subsets of $V(H)$. We call the cardinality of $I(H)$ as the \textit{edge index} or simply the \textit{$e$-index} of $H$. 
\end{defn}
\begin{defn}\normalfont
	Let $H$ be a hypergraph. Let $S$ and $S'\in I(H)$. We say that \textit{$S$ is a neighbor of $S'$ with multiplicity} $c~(\geq1)$, if there is an edge $e$ in $H$ with multiplicity $c$ for which $S$ and $S'$ are parts. We denote it by $S\overset{c}{\sim}S'$. Also, we simply write $S\sim S'$, whenever there is no necessity to mention the multiplicity of $S$ explicitly. 
\end{defn}

Now, we define the unified matrix of a hypergraph as follows.
\begin{defn}\normalfont
	Let $H$ be a hypergraph with $e$-index $k$. Let $I(H)=\{S_1, S_2,\dots, S_{k}\}$. Then \textit{the unified matrix of $H$}, denoted by $\mathbf{U}(H)$, is the matrix of order $k$ whose rows and columns are indexed by the elements of $I(H)$ and for all $S_i, S_j\in I(H)$, 
	\begin{center}
	 the $(S_i, S_j)^{th}$ entry of $\mathbf{U}(H)=$
		$\begin{cases}
		m(\{S_i\}), &\text{if}~i=j,~|S_i|=1~\text{and}~S_i\in E(H);\\
		c, &\text{if}~S_i \overset{c}{\sim} S_j;\\
		0,&\text{otherwise}.
		\end{cases}$
		\end{center}
\end{defn}
\begin{example}\normalfont
Consider the hypergraph $H$ shown in Figure~\ref{fig}. 
	\begin{figure}[ht]
	\begin{center}
		\includegraphics[scale=1]{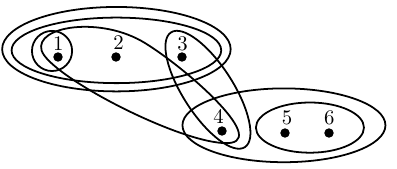}
	\end{center}\caption{The hypergraph $H$}\label{fig}
\end{figure}
Notice that $I(H)=\{\{1\}, \{2\}, \{3\},$ $ \{4\}, \{5\}, \{6\}, \{1,2\}, \{1,3\}, \{1,4\}, \{2,3\},\{2,4\}, \{4,5\},$ $\{4,6\}, \{5,6\}\}$. We name the elements of $I(H)$ as $S_1, S_2, \dots, S_{14}$ in the order in which they appear in it. Then for $i,j\in \{1,2,\dots,14\}$,
	\begin{center}
	the $(S_i, S_j)^{th}$ entry of $\mathbf{U}(H)=$
	$\begin{cases}
	1, &\text{if}~(i,j)\in A_1;\\
	2, &\text{if}~(i,j)\in A_2;\\
	0,&\text{otherwise},
	\end{cases}$
\end{center} where $A_1=\{(1,1),(1,11),(11,1),(2,9),(9,2),(3,4),(4,3),(4,7),(7,4),(4,14),(5,6),(6,5),(5,13),$\\$(6,12),(12,6)\}$ and $A_2=\{(1,10),(10,1),(2,8),(8,2),(3,7),(7,3)\}$.
\end{example}
 Since $\mathbf{U}(H)$ is a symmetric matrix, its eigenvalues are all real. We denote them by $\lambda_1(H)$, $\lambda_2(H)$,$\dots$, $\lambda_k(H)$ and we shall assume that $\lambda_1(H)\geq\lambda_2(H)\geq\dots\geq\lambda_{k}(H)$. The characteristic polynomial of $\mathbf{U}(H)$ is said to be the \emph{unified characteristic polynomial of $H$}. An eigenvalue of $\mathbf{U}(H)$ is said to be an \emph{unified eigenvalue of $H$}, and
the spectrum of $\mathbf{U}(H)$ is said to be the \emph{unified spectrum of $H$}, or simply \emph{$\mathbf{U}$-spectrum of $H$}.  

Let $H$ be a loopless hypergraph. Since $\mathbf{U}(H)$ is a symmetric matrix of order $k$, it gives raise to a unique loopless graph $G_H$ whose vertex set is $I(H)$ and two vertices $S$ and $S'$ are joined by $c~(\geq1)$ number of edges in $G_H$ if and only if $S \overset{c}{\sim} S'$ in $H$. We call $G_H$ as the \textit{associated graph} of $H$. It is clear that $A(G_H)=\mathbf{U}(H)$, and so they have the same spectrum. 
Moreover, if $H$ is a loopless graph, then $A(H)$ and $\mathbf{U}(H)$ have the same eigenvalues. Thereby, we denote the eigenvalues of these two matrices commonly as $\lambda_i(H)$, $i=1,2,\dots,k$.
These reveals that the unified matrix of a loopless hypergraph is a natural generalization of the adjacency matrix of a loopless graph. Furthermore, it can be seen that a hypergraph can be uniquely determined by its unified matrix, as long as the indices of the matrix are specified.
\begin{example}\normalfont
	Let $H'$ be the hypergraph obtained by deleting the loop $\{1\}$ in the hypergraph $H$ shown in Figure~\ref{fig}. The associated graph $G_{H'}$ of $H'$ is shown in Figure~\ref{Fig}. 
	\begin{figure}[ht]
		\begin{center}
			\includegraphics[scale=1]{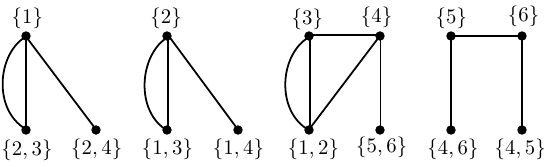}
		\end{center}\caption{The associated graph $G_{H'}$ of $H'$}\label{Fig}
	\end{figure}
\end{example}
From this point forward, the elements of $I(H)$ will only be referenced explicitly when necessary.


 \section{Basic results on the unified matrix of a hypergraph}\label{sec4}
 \begin{defn}\normalfont
	Let $H$ be a hypergraph. For each $S\in I(H)$, we define the \textit{unified degree of $S$ in $H$}, denoted by $d_H(S)$, as the cardinality of the multiset $\{e\in E(H)~|~S\subseteq e\}$. Further, we define the \textit{minimum unified degree of $H$}, denoted by $\delta^*(H)$, as $\delta^*(H)=\min\{d_H(S)\mid S\in I(H)\}$.
\end{defn}
 It is clear that if $S=\{v\}$, then $d_H(\{v\})=d_H(v)$.

\begin{notation}\normalfont
	For a hypergraph $H$, we introduce the following notations.
	\begin{itemize}
		\item Let $\partial(H)$ denote the cardinality of the multiset of all included edges of $H$ with cardinality at least 2.
		\item Let $E^*(H)$ denote the set of all elements in the multiset $E(H)$. 
	\end{itemize}
\end{notation}
\begin{lemma}\normalfont
	Let $H$ be a hypergraph with $e$-index $k$ and let $\mathbf{U}(H)=(\mathbf{U}_{S_iS_j})_{1\leq i,j\leq k}$ be its unified matrix. Then we have the following.
\begin{itemize}
    	\item[(i)]   	
     	The row sum of $\mathbf{U}(H)$ corresponding to a singleton element $\{v\}$ in $I(H)$ gives the degree of the vertex $v$ in $H$. 
	\item[(ii)]  $\underset{v\in V(H)}{\sum}\mathbf{U}_{\{v\}S'}=\underset{v\in V(H)}{\sum}d_H(v)=\underset{e\in E(H)}{\sum}|e|$.  
	
	If $H$ is $m$-uniform, then 
	\begin{align*}
	\underset{v\in V(H)}{\sum}\mathbf{U}_{\{v\}S'}=\underset{v\in V(H)}{\sum}d_H(v)=m\cdot|E(H)|.
	\end{align*}
	\item[(iii)] $ \underset{\lambda\in \sigma(\mathbf{U}(H))}{\sum}\lambda^2=2\left(\underset{e\in E^*(H)}{\sum} m(e)^2|\tau(e)|\right)+\underset{\{v\}\in E^*(H)}{\sum}m(\{v\})^2$; 
	
	In particular, if $H$ has no multiple edges, then 
	\begin{align}\label{L eq2}
\underset{\lambda\in \sigma(\mathbf{U}(H))}{\sum}\lambda^2=\left(\underset{S\in I(H)}{\sum}d_H(S)\right)-\partial(H).
\end{align}
\end{itemize}
\end{lemma}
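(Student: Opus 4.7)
My plan is to verify the three parts by a direct computation with the definition of $\mathbf{U}(H)$, and then to invoke the trace identity $\sum_\lambda\lambda^2=tr(\mathbf{U}(H)^2)$ for part (iii).

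For (i), I fix a singleton $\{v\}\in I(H)$ and inspect the nonzero entries in its row. The diagonal entry contributes $m(\{v\})$ precisely when $\{v\}$ is itself an edge. An off-diagonal entry $\mathbf{U}_{\{v\}S'}$ is nonzero exactly when there is an edge $e\in E(H)$ with $\{v\}$ and $S'=e\setminus\{v\}$ as its two parts; each edge $e\ni v$ with $|e|\ge 2$ therefore contributes $m(e)$ to exactly one entry of the row. Summing these contributions recovers $\sum_{e\ni v}m(e)=d_H(v)$. Part (ii) is then immediate: sum (i) over $v\in V(H)$ and interchange the summation to get $\sum_v d_H(v)=\sum_{e}m(e)|e|=\sum_{e\in E(H)}|e|$ (with $E(H)$ read as a multiset); specialising $|e|=m$ yields the $m$-uniform formula.

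For the general formula in (iii), I apply $\sum_\lambda\lambda^2=tr(\mathbf{U}(H)^2)=\sum_{i,j}\mathbf{U}_{S_iS_j}^2$ and split diagonal from off-diagonal contributions. Diagonal squares can arise only from loops, giving $\sum_{\{v\}\in E^*(H)}m(\{v\})^2$. Off-diagonally, each edge $e\in E^*(H)$ with $|e|\ge 2$ gives $|\tau(e)|$ unordered part-pairs $\{S,S'\}$, each producing two symmetric entries equal to $m(e)$; loops contribute nothing here since $|\tau(\{v\})|=0$. Collecting everything produces the stated formula.

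The genuinely substantive step, and the one I expect to be the main obstacle, is the reformulation (\ref{L eq2}) for the multiplicity-free case. I would handle it by a double count: $\sum_{S\in I(H)}d_H(S)=\sum_{e\in E(H)}|\{S\in I(H):S\subseteq e\}|$. For $e$ with $|e|\ge 2$ the subsets of $e$ lying in $I(H)$ are the $|e|$ singletons (all singletons of $V(H)$ lie in $I(H)$), the $2^{|e|}-2-|e|$ non-singleton proper subsets of $e$ (automatically parts of $e$, hence in $I(H)$), and $e$ itself, which lies in $I(H)$ iff $e$ is a part of a strictly larger edge, i.e.\ iff $e$ is included. Hence $|\{S\in I(H):S\subseteq e\}|=2^{|e|}-2+[e\text{ is included}]=2|\tau(e)|+[e\text{ is included}]$, using $|\tau(e)|=2^{|e|-1}-1$. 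Loops contribute $1$ each. Summing, $\sum_{S\in I(H)}d_H(S)-\partial(H)$ collapses to $2\sum_{|e|\ge 2}|\tau(e)|+|\{\{v\}\in E(H)\}|$, which is exactly the general formula in the case $m(e)\equiv 1$.
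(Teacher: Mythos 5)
Your proof is correct and follows essentially the same route as the paper's: parts (i)--(ii) by direct inspection of the singleton rows, part (iii) via $\sum_{\lambda}\lambda^2 = tr(\mathbf{U}(H)^2)$ split into diagonal (loop) and off-diagonal (part-pair) entries, and the multiplicity-free identity by double-counting $\sum_{S\in I(H)} d_H(S)$. The only cosmetic difference is that you organize that double count by edges (counting the subsets of each $e$ lying in $I(H)$, namely $2|\tau(e)|$ proper nonempty subsets plus $e$ itself when $e$ is included), whereas the paper sums over $S\in I(H)$ and cases on whether $S\subset e$ or $S\in E(H)$; both yield the same identity $\sum_{S\in I(H)} d_H(S)=2\sum_{e\in E^*(H)} m(e)|\tau(e)|+\sum_{\{v\}\in E^*(H)} m(\{v\})+\partial(H)$.
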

\begin{proof}
	\begin{itemize}
		\item[(i)] Let $\{v\}\in I(H)$. Notice that each $e\in E(H)$ with multiplicity $m$ with $v\in e$ contributes $m$ to the entry $\mathbf{U}_{\{v\}S}$, where $S=\{v\}$ or $e\backslash\{v\}$ according as $e$ is a loop or not. Hence the result follows.
		\item[(ii)] From part~(i), the first equality is clear. Notice that an edge $e$ contributes $|e|$ in the sum of the degrees of all the vertices in $e$. Thus, we have the second equality. 
		In particular, if $H$ is $m$-uniform, then the result follows by substituting $|e|=m$ in $\underset{e\in E(H)}{\sum}|e|$.
		\item[(iii)] Let $S_i\in I(H)$, where $i\in\{1,2,\dots,k\}$. Since $\mathbf{U}(H)$ is symmetric, we have
		\begin{equation}\label{EQ1}
		\mathbf{U}(H)^2_{S_iS_i}=\sum_{t=1}^{k}\mathbf{U}_{S_iS_t}\mathbf{U}_{S_tS_i}=\sum_{t=1}^{k}\mathbf{U}_{S_iS_t}^2.
		\end{equation}
		Using \eqref{EQ1}, we have
			\begin{align}\label{EQ2}
		\underset{\lambda\in \sigma(\mathbf{U}(H))}{\sum}\lambda^2=tr(\mathbf{U}(H)^2)&=\sum_{i=1}^{k}\mathbf{U}(H)^2_{S_iS_i}\nonumber\\
		&=\sum_{i=1}^{k}\sum_{t=1}^{k}\mathbf{U}_{S_iS_t}^2\nonumber\\
		&=2\sum_{1\leq i<t\leq k}\mathbf{U}_{S_iS_t}^2+\sum_{i=1}^{k}\mathbf{U}_{S_iS_i}^2.
		\end{align}
		Notice that $$\mathbf{U}_{S_iS_t}=\begin{cases}
		m(e),& \text{if}~S_i\neq S_t~\text{and}~\{S_i,S_t\}\in\tau(e)~\text{for some}~e\in E^*(H);\\
		m(\{v\}),&\text{if}~S_i=S_t=\{v\}~\text{and}~\{v\}\in E^*(H);\\
		0,& \text{otherwise}.
		\end{cases}$$	
		The first part of this result follows by substituting $\mathbf{U}_{S_iS_t}$ in~\eqref{EQ2}.
			
			Now, we shall compute 
			\begin{align}\label{dsum}
			\underset{S\in I(H)}{\sum}d_H(S).
			\end{align} Let $S\in I(H)$. We have the following cases to consider.
			
				\textbf{Case a.} Suppose $S\nsubseteq e$ for any $e\in E(H)$. Then $d_H(S)=0$.
			
		\textbf{Case b.} Suppose $S\subset e$. Then the set $\{S, e\backslash S\}\in \tau(e)$ contributes $m(e)$ in~\eqref{dsum}	corresponding to the counting of $d_H(S)$. Since $e\backslash S$ is also a subset of $e~(\neq e\backslash S)$, the same set $\{S, e\backslash S\}\in\tau(e)$ contributes $m(e)$ in~\eqref{dsum} corresponding to the counting of $d_H(e\backslash S)$. Since, each element in $\tau(e)$ is of the form $\{S, e\backslash S\}$, where $S\subset e$ for some $e\in E(H)$, so each element in $\tau(e)$ contributes $2m(e)$ in~\eqref{dsum}.
			
		\textbf{Case c.} Suppose $S\in E(H)$. Then $S$ is either a loop or an included edge. So, $S$ contributes $1$ in~\eqref{dsum} corresponding to the counting of $d_H(S)$.

	By the above cases,~\eqref{dsum} becomes
		
		 \begin{align}\label{EQ3}
			\underset{S\in I(H)}{\sum}d_H(S)=2\left(\underset{e\in E^*(H)}{\sum}m(e)|\tau(e)|\right)+\underset{\{v\}\in E^*(H)}{\sum}m(\{v\})+\partial(H).
		\end{align}
		
		In particular, if $H$ has no multiple edges, then $E^*(H)=E(H)$ and $m(e)=1$ for all $e\in E(H)$. Substituting these in~\eqref{EQ3} and in the first part of this result and then combining them, we get the result as desired.		
			\end{itemize}
\end{proof}
\begin{thm}
	  If $H$ is a loopless hypergraph, then
	\begin{equation*}
	\underset{T}{\sum} \sqrt{|det(\mathbf{U}(H)[T|T])|}=\underset{e\in E^*(H)}{\sum} m(e)|\tau(e)|,
	\end{equation*} where the summation is over all the $2$-sets $T$ in $I(H)$.
	If $H$ is simple, then the sum of all the $2\times 2$ principal minors of $\mathbf{U}(H)$ is $-\underset{e\in E(H)}{\sum} |\tau(e)|$. 
\end{thm}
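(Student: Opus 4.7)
The plan is to compute each $2\times 2$ principal minor of $\mathbf{U}(H)$ directly and then reorganise the resulting sum according to the edges of $H$. First, since $H$ is loopless, the defining case $|S_i|=1$ with $S_i\in E(H)$ never occurs, so every diagonal entry $\mathbf{U}_{S_iS_i}$ of $\mathbf{U}(H)$ vanishes. For a $2$-set $T=\{S_i,S_j\}\subseteq I(H)$, this gives
\[
\det(\mathbf{U}(H)[T|T]) \;=\; -\,\mathbf{U}_{S_iS_j}^{2},
\]
so $\sqrt{|\det(\mathbf{U}(H)[T|T])|} = \mathbf{U}_{S_iS_j}$, using that the entries of $\mathbf{U}(H)$ are non-negative integers.

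Next I would interpret the off-diagonal entry via the definition of the unified matrix: $\mathbf{U}_{S_iS_j}$ equals $m(e)$ exactly when $\{S_i,S_j\}$ is a $2$-partition of some edge $e\in E^{*}(H)$, and is $0$ otherwise. The key structural point is that the edge $e$ is uniquely determined by the $2$-set $T$ via $e=S_i\cup S_j$, since the two parts of a $2$-partition must be disjoint; hence each contributing pair $T$ is counted against exactly one edge. Reversing the order of summation therefore gives
\[
\sum_{T}\sqrt{|\det(\mathbf{U}(H)[T|T])|} \;=\; \sum_{e\in E^{*}(H)}\;\sum_{\{S_i,S_j\}\in\tau(e)} m(e) \;=\; \sum_{e\in E^{*}(H)} m(e)\,|\tau(e)|,
\]
which is the first identity.

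For the second claim, when $H$ is simple one has $m(e)=1$ for every $e\in E(H)$ and $E^{*}(H)=E(H)$, so every non-zero $2\times 2$ principal minor equals $-1$; summing the minors themselves (with signs) and applying the same reorganisation yields $-\sum_{e\in E(H)}|\tau(e)|$, as required. The argument is essentially a careful rearrangement, and the only subtle point I expect to handle is the bijection between $2$-sets $\{S_i,S_j\}\subseteq I(H)$ that form a partition of some edge and pairs $(e,\{S_i,S_j\})$ with $\{S_i,S_j\}\in\tau(e)$; this is guaranteed because every part of every edge lies in $I(H)$ by definition, so no required index is missing from the index set of $\mathbf{U}(H)$.
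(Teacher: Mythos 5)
Your proposal is correct and follows essentially the same route as the paper: compute each $2\times 2$ principal minor directly (it is $-\mathbf{U}_{S_iS_j}^2$ since the diagonal vanishes for loopless $H$), observe that the off-diagonal entry is $m(e)$ precisely when $\{S_i,S_j\}\in\tau(e)$ for the unique edge $e=S_i\cup S_j$, and regroup the sum over $2$-sets as a sum over edges; the simple case then follows by setting $m(e)=1$. If anything, your write-up is slightly more explicit than the paper's about the bijection between contributing $2$-sets and pairs $(e,\{S_i,S_j\})$ with $\{S_i,S_j\}\in\tau(e)$.
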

	\begin{proof}
		Let $T=\{S_i, S_j\}$ be a $2$-set in $I(H)$. Since $H$ is loopless, the $2\times 2$ principal minor of $\mathbf{U}(H)$ corresponds to the rows and the columns indexed by $S_i$ and $S_j$ equals either $-(m(S_i\cup S_j))^2$ or zero according as $S_i\cup S_j$ is an edge in $H$ or not. So $\sqrt{|det(\mathbf{U}(H)[T|T])|}$ equals $m(e)$ only if $S\cup S'=e$ for some $e\in E^*(H)$.
		Thus, for an edge $e\in E^*(H)$, the sum of the square root of the modulus value of the $2\times2$ principal minor of $\mathbf{U}(H)$ corresponds to all the $2$-subset partitions of $e$ equals $|\tau(e)|$$m(e)$. 
		
		 Notice that if $H$	is simple, then the principal minor of $\mathbf{U}(H)$ corresponds to each $\{S_i, S_j\}\in \tau(e)$ equals $-1$ and all other $2\times 2$ principal minors are zero. This completes the proof.
		\end{proof}


\section{New structures and invariants of hypergraphs and unified eigenvalues}\label{sec5}
In this section, we introduce new structures and invariants on hypergraphs and we establish some results by relating them with the unified matrix of a hypergraph and its eigenvalues.

\begin{defn}\normalfont
Let $H$ be a hypergraph. An \textit{exact walk} in $H$ is a sequence $EW:(S=)S_0,$ $e_1,S_1,e_2,S_2,$ $\dots,e_{n-1},$ $S_{n-1},e_n,S_n(=S')$, where $\{S_{i-1}, S_i\}$ is a $2$-partition of the edge $e_i$ in $H$ for $i=1,2,\dots,n$. We say $EW$ joins $S$ and $S'$. Each $S_i$ occurs in $EW$ is a \textit{part} of $EW$. $S$ and $S'$ are \textit{the initial part} and \textit{the terminal part} of $EW$, respectively; Other parts are called \textit{internal parts} of $EW$. We call the set of all parts of $EW$ as its \textit{cover}. If $S=\{u\}$ and $S'=\{v\}$, then we say that $EW$ joins $u$ and $v$. The \textit{length of $EW$}, denoted by $l(EW)$, is the number of edges in $EW$, i.e., $n$.  
	We denote the exact walk simply by $EW:(S=)S_0,S_1,S_2,\dots,S_{n-1},S_n(=S')$, when the edges of the walk are self evident. 	
	Two exact walks $EW_1$ and $EW_2$ in $H$ are \textit{disjoint} if the set of all parts of $EW_1$ and the set of all parts of $EW_2$ are disjoint.
	
	An exact walk in $H$ is said to be an \textit{exact trail}, if $\{S_{i-1}, S_i\}\neq \{S_{j-1}, S_j\}$ for $i,j=1,2,\dots,n$ and $i \neq j$.	
	An exact walk in $H$ is said to be an \textit{exact path} if all its parts are distinct.
	Notice that in an exact path $EP:S_0,e_1,S_1,e_2,S_2,\dots,e_{n-1},S_{n-1},e_n,S_n$, an edge $e_i$ can be repeated as $e_j$ only when $\{S_{i-1}, S_i\}$ and $\{S_{j-1}, S_j\}$ are disjoint for $1\leq i<j\leq n$.	
	
	An exact walk in $H$ with at least three distinct edges is said to be an \textit{exact cycle} if its initial and terminal parts are the same and all internal parts are distinct. We say an \textit{exact cycle as odd (even)} if its length is odd (even).
We say an exact cycle of length three as an \textit{$e$-triangle}. 

Let $H$ be a loopless hypergraph. From the construction of $G_H$, it is clear that an exact walk (exact path) $S_0,S_1,S_2,\dots,S_{n-1},S_n$ in $H$ induces a walk (path) $S_0,S_1,S_2,\dots,S_{n-1},S_n$ in $G_H$ and vice versa. In a similar way,  an exact cycle $S_0,S_1,S_2,\dots,S_{n-1},S_0$ in $H$ induces a cycle $S_0,S_1,S_2,\dots,S_{n-1},S_0$ in $G_H$ and vice versa. 

\begin{example}\normalfont
	Consider the hypergraph $H$ shown in Figure~\ref{fig}. Then we have the following. 
	\begin{itemize}
		\item[(i)] $\{3\},\{4\},\{1,2\},\{4\},\{5,6\}$ is an exact walk joining $\{3\}$ and $\{5,6\}$ in $H$.
		\item[(ii)] 	$\{3\},\{1,2\},\{4\},\{5,6\}$ is an exact path joining $\{3\}$ and $\{5,6\}$ in $H$.
		\item[(iii)] $\{1,2\},\{3\},\{4\},\{1,2\}$ is an exact cycle in $H$.
	\end{itemize}
\end{example}

\begin{defn}\normalfont
	The \textit{exact girth} of a hypergraph $H$, denoted by $egr(H)$, is the minimum of the length of all exact cycles in $H$. 
	The \textit{odd exact girth} of $H$, denoted by $oegr(H)$, is defined as the minimum of the length of all odd exact cycles in $H$.
\end{defn}

An exact walk, an exact trail, an exact path and an exact cycle in a graph, respectively become a walk, a trail, a path and a cycle. Also, the exact girth of a graph is the same as the girth.

\begin{defn}\normalfont A hypergraph with at least two (three) distinct edges is said to be an \textit{unified path} (\textit{unified cycle}) if its edges and some of their parts can be arranged in an exact path (exact cycle) sequence whose parts are pairwise disjoint.
\end{defn}
Notice that an unified path (unified cycle) in $H$ is a path (cycle) in $H$.
\end{defn}

\begin{lemma}\normalfont\label{pathlem}
	Let $H$ be a unified path whose edges and some of their parts are arranged as an exact path $EP:S_0,e_1,S_1,e_2,S_2,\dots,e_{n-1},S_{n-1},e_n,S_n$, $n>1$ and $S_i$s are pairwise disjoint. Suppose the edges and some of their parts of $H$ are arranged in to another exact path $EP_1$, whose parts are pairwise disjoint. Then $EP_1$ must be the reverse of $EP$, i.e., $EP_1=S_n,e_n,S_{n-1},e_{n-1},S_{n-2},\dots,S_{1},e_1,S_0$.
\end{lemma}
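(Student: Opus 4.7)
The plan is to reduce the problem to a rigidity statement about Hamiltonian paths in a certain auxiliary graph on the edge set of $H$. The key observation is that pairwise disjointness of the parts in an exact-path arrangement forces a very rigid intersection pattern among the edges, which leaves essentially no freedom to reorder them.

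First, I exploit the hypothesis on $EP$. Since $e_i = S_{i-1} \cup S_i$ and $S_0, S_1, \ldots, S_n$ are pairwise disjoint, a direct computation gives $e_i \cap e_{i+1} = S_i \neq \emptyset$ for $1 \le i \le n-1$, while $e_i \cap e_j = \emptyset$ whenever $|i-j| \ge 2$. Thus the intersection graph $G^*$ on $\{e_1, \ldots, e_n\}$ (with $e_i$ adjacent to $e_j$ iff $e_i \cap e_j \neq \emptyset$) is precisely the path $P_n$ with vertex sequence $e_1, e_2, \ldots, e_n$.

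Next, I apply the same reasoning to $EP_1 : T_0, f_1, T_1, \ldots, f_n, T_n$. Expanding $f_j \cap f_{j+1} = (T_{j-1} \cup T_j) \cap (T_j \cup T_{j+1})$ and using pairwise disjointness of the $T_j$s yields $f_j \cap f_{j+1} = T_j$ for $1 \le j \le n-1$; in particular, consecutive edges in $EP_1$ must be adjacent in $G^*$. Since $EP_1$ uses the same edge set as $H$, the sequence $f_1, f_2, \ldots, f_n$ is a Hamiltonian path in $G^*$. But $P_n$ admits exactly two Hamiltonian paths, namely $e_1, e_2, \ldots, e_n$ and its reverse. In the forward case, $T_j = e_j \cap e_{j+1} = S_j$ for each interior $j$, and then $T_0 = f_1 \setminus T_1 = (S_0 \cup S_1)\setminus S_1 = S_0$, $T_n = f_n \setminus T_{n-1} = S_n$, forcing $EP_1 = EP$, contrary to the assumption that $EP_1$ is \emph{another} exact path arrangement. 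Hence $f_j = e_{n+1-j}$, and the analogous computation yields $T_j = e_{n+1-j}\cap e_{n-j} = S_{n-j}$ for interior $j$, together with $T_0 = S_n$ and $T_n = S_0$. This exhibits $EP_1$ as the reverse of $EP$, as required.

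The most delicate point is the identity $T_j = f_j \cap f_{j+1}$. The inclusion $T_j \subseteq f_j \cap f_{j+1}$ is immediate from $T_j \subseteq f_j$ and $T_j \subseteq f_{j+1}$; the reverse inclusion is where pairwise disjointness of the $T_j$s does the work, killing every cross term of the form $T_{j-1} \cap T_{j+1}$, $T_{j-1} \cap T_j$, $T_j \cap T_{j+1}$ in the expansion. Once this bookkeeping is in place, the remainder of the argument collapses to the triviality that $P_n$ has only two Hamiltonian paths, so I do not anticipate any further difficulty.
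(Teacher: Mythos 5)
Your proof is correct, and its core computation is the same as the paper's: once the order of the edges in $EP_1$ is pinned down, you recover the parts via $T_j=f_j\cap f_{j+1}$ (using pairwise disjointness to kill the cross terms) and then peel off the two endpoints by set difference, exactly as the paper does with $T_1=e_1\cap e_2=S_1$. Where you genuinely add something is at the step the paper disposes of with ``the edges in $EP_1$ must clearly follow one of the two sequences'': you actually prove this, by observing that pairwise disjointness of the $S_i$ forces $e_i\cap e_j=\emptyset$ for $|i-j|\geq 2$ and $e_i\cap e_{i+1}=S_i\neq\emptyset$, so the intersection graph of the edge set is the path $P_n$, and any admissible arrangement of $EP_1$ induces a Hamiltonian path in it, of which $P_n$ has only the two traversals. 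That dichotomy is arguably the only non-obvious content of the lemma, so your version is more complete; implicitly your argument also uses (and the disjointness hypothesis does guarantee) that the $e_i$ are pairwise distinct as sets, which is what makes the $f_j$'s a bijective enumeration of the vertices of $P_n$ --- worth a sentence if you write this up. Both proofs handle the forward case the same way, by reading ``another'' as ``distinct from $EP$'' so that $T_j=S_j$ for all $j$ is excluded.
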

 \begin{proof} 
 Since $H$ has $n$ edges, $EP_1$ must have exactly $n$ edges. 
 The edges in $EP_1$ must clearly follow one of the two sequences: either in the order $e_1, e_2,\dots,e_n$ or in the reverse, $e_n,e_{n-1},\dots,e_1$. 
 
 \textbf{Case~1.} If the edges of $EP_1$ comes as per the first order, then $EP_1$ is of the form $T_0,e_1,T_1,e_2,T_2,\dots,e_{n-1},T_{n-1},e_n,T_n$, where $\{T_{i-1}, T_i\}$ is a $2$-partition of the edge $e_i$ in $H$ for $i=1,2,3,\dots,n$.
 Since $n>1$, we have $T_1= e_1\cap e_2$. From $EP$, we have $e_1\cap e_2=S_1$. So, $T_1=S_1$. This implies that the complements of $S_1$ and $T_1$ in $e_1$ (resp. $e_2$) are the same. i.e., $T_0=S_0$ (resp. $T_2=S_2$).  In the same way, we can prove $T_i=S_i$ for all $i=3,4,\dots,n$.
 
\textbf{Case~2.} This case is similar to Case 1. In this case, we get $EP_1=S_n,e_n,S_{n-1},e_{n-1},$ $S_{n-2},\dots,S_{1},e_1,S_0$, the reverse sequence of $EP$. 	
 \end{proof}

As per Lemma~\ref{pathlem}, the edges and their parts of an unified path can be arranged exactly as two different exact paths whose parts are pairwise disjoint, which are reverse of each other. Conversely, an exact path of length at least two, whose parts are pairwise disjoint is simply the sequence of parts and edges of an unified path. Even though it is not a $1-1$ correspondence, we often specify an unified path by describing an associated exact path whose parts are pairwise disjoint and refer to that exact path as the unified path itself.

 In a similar way, if the edges and their parts of an unified cycle $H$ are arranged as an exact cycle $EC$ of length $n$ whose parts are pairwise disjoint, then the edges and their parts of $H$ are exactly arranged as $2n$ different exact cycles, each of which may start and end at any part of $EC$ and traverse in either direction. Conversely, an exact cycle whose parts are pairwise disjoint is simply the sequence of parts and edges of an unified cycle. Even though it is not a $1-1$ correspondence, we often specify an unified cycle by describing an associated exact cycle whose parts are pairwise disjoint and refer to that exact cycle as the unified cycle itself.

\begin{note}\normalfont
Two unified paths (unified cycles) of length $n$ need not be isomorphic. For instance, $EP_1:\{1\},\{2,3\},\{4,5\}$ is an unified path of length two joining $\{1\}$ and $\{4,5\}$, and $EP_2: \{1,2\}, \{3,4\}, \{5,6,7\}$ is an unified path of length two joining $\{1,2\}$ and $\{5,6,7\}$; 
But the cardinality of the vertex sets of these two unified paths are not equal.  So $EP_1$ and $EP_2$ are not isomorphic. In a similar way, we can get two non-isomorphic unified cycles of same length whose vertex sets are not equal.
\end{note}
	
	\begin{defn}\normalfont
		A hypergraph $H$ is said to be \textit{exactly connected} if for any two distinct vertices $u$ and $v$ in $H$, there exists $S, S'\in I(H)$ with $u\in S$ and $v\in S'$ such that $S$ and $S'$ are joined by an exact path in $H$.
	\end{defn}
\begin{example}\normalfont
	An exactly connected hypergraph $H$ is shown in Figure~\ref{fig2}.  
	\begin{figure}[ht]
		\begin{center}
			\includegraphics[scale=1.25]{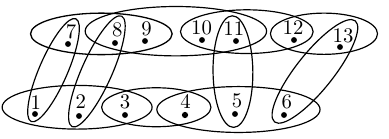}
		\end{center}\caption{An exactly connected hypergraph $H$}\label{fig2}
	\end{figure}
\end{example}
\begin{defn}\normalfont
	Let $H$ be a hypergraph. The \textit{exact distance} or simply the \textit{$e$-distance} between distinct vertices $u$ and $v$ in $H$, denoted by $ed_H(u,v)$, is the length of a shortest exact path joining $S, S'\in I(H)$ with $u\in S$ and $v\in S'$, if such an exact path exists; $\infty$, otherwise. For a vertex $u$ in $H$, we define $ed_H(u,u)=0.$ 
\end{defn}
\begin{defn}\normalfont
	The \textit{exact diameter} of an exactly connected hypergraph $H$ is the maximum of the $e$-distance between all the pairs of vertices in $H$, and we denoted it by $ED(H)$,   
	i.e., $ED(H)=\max\{ed_H(u,v)~|~u,v\in V(H)\}$.
\end{defn}

	It is clear that the exact distance defined on a hypergraph $H$ is a semi-metric. However, it is not a metric, as the triangle inequality may fail.
	For instance, in the hypergraph $H$ shown in Figure~\ref{fig2}, $ed_H(1,6)=3$ and $ed_H(6,13)=1$; but $ed_H(1,13)=5$.
	
	\begin{lemma}
	If $H$ is a loopless hypergraph, then 	the $(S_i, S_j)^{th}$ entry of $\mathbf{U}(H)^\ell$ is the number of exact walks in $H$ of length $\ell$ from $S_i$ to $S_j$.
	\end{lemma}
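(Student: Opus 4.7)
The plan is a straightforward induction on $\ell$, following the classical template used to show that powers of the adjacency matrix of a graph count walks, with careful bookkeeping for edge multiplicities and use of the looplessness assumption.

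For the base case $\ell=1$, an exact walk of length $1$ from $S_i$ to $S_j$ is a triple $(S_i, e, S_j)$ with $\{S_i, S_j\}$ a $2$-partition of some $e \in E(H)$. Since $H$ is loopless, no singleton set is an edge, so $\mathbf{U}(H)_{S_iS_i}=0$ for every $i$; this is consistent with the fact that no edge admits $\{S_i, S_i\}$ as a $2$-partition, so there are zero walks of length one from $S_i$ to itself. For $S_i \neq S_j$, the definition of the unified matrix gives $\mathbf{U}(H)_{S_iS_j}=c$ exactly when the (unique) edge $e = S_i \cup S_j$ admitting $\{S_i, S_j\}$ as a $2$-partition has multiplicity $c$, and each of the $c$ parallel copies of $e$ yields a distinct exact walk of length $1$ from $S_i$ to $S_j$. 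This establishes the base case.

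For the inductive step, assume the claim for $\ell-1$. Standard matrix multiplication gives
\begin{equation*}
\mathbf{U}(H)^{\ell}_{S_iS_j} \;=\; \sum_{S_t \in I(H)} \mathbf{U}(H)^{\ell-1}_{S_iS_t}\,\mathbf{U}(H)_{S_tS_j}.
\end{equation*}
By the induction hypothesis, the first factor counts exact walks of length $\ell-1$ from $S_i$ to $S_t$, and by the base case, the second factor counts exact walks of length $1$ from $S_t$ to $S_j$. Any exact walk of length $\ell$ from $S_i$ to $S_j$ decomposes uniquely by cutting off its final edge, producing an exact walk of length $\ell-1$ ending at a uniquely determined penultimate part $S_t$, followed by an exact walk of length $1$ from $S_t$ to $S_j$. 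Conversely, any such concatenation is again an exact walk since the defining $2$-partition condition of the appended edge is preserved. Summing the product over all $S_t \in I(H)$ therefore yields exactly the total number of exact walks of length $\ell$ from $S_i$ to $S_j$, completing the induction.

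The only potential obstacle is bookkeeping around parallel edges: each parallel copy of an edge must give rise to a distinct exact walk in the count. This is handled transparently because $\mathbf{U}(H)_{S_iS_j}$ records the multiplicity $c$ of the underlying edge rather than merely the indicator of its presence, so the product $\mathbf{U}(H)^{\ell-1}_{S_iS_t}\,\mathbf{U}(H)_{S_tS_j}$ correctly enumerates ordered pairs consisting of a length-$(\ell-1)$ prefix walk and a choice of one of the $c$ parallel final edges. The looplessness hypothesis is used solely to guarantee that $\mathbf{U}(H)$ has zero diagonal, so no spurious fixed-part contributions arise from singleton edges in either the base case or in the sum above.
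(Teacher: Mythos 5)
Your proof is correct, but it takes a different route from the paper. The paper does not run the induction at all: it observes that exact walks in $H$ correspond bijectively to walks in the associated graph $G_H$ (by the construction of $G_H$ in Section~3) and that $\mathbf{U}(H)=A(G_H)$, and then simply cites the classical fact that the $(v_i,v_j)^{th}$ entry of $A(G)^{\ell}$ counts walks of length $\ell$ in a graph $G$. You instead reprove that classical fact directly for $\mathbf{U}(H)$ by induction on $\ell$, decomposing a length-$\ell$ exact walk at its penultimate part. Your argument is self-contained and makes explicit two points the paper leaves implicit: that looplessness forces a zero diagonal (no singleton of $I(H)$ is an edge, and $\{S_i,S_i\}$ is never a $2$-partition), and that parallel copies of an edge must be counted as distinct exact walks, which is exactly what the multiplicity entry $c$ in $\mathbf{U}(H)$ encodes. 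The paper's reduction is shorter and reuses the correspondence with $G_H$ that drives most of its later results; your version costs a little more writing but does not depend on the associated-graph machinery, and in particular verifies on the nose that the base case of the induction matches the definition of the unified matrix. One small point worth keeping in mind either way: the statement implicitly adopts the convention that an exact walk remembers which parallel copy of an edge it uses (equivalently, that walks in the multigraph $G_H$ distinguish parallel edges); your bookkeeping paragraph handles this correctly.
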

	\begin{proof} Since any exact walk in a loopless hypergraph $H$ induces a walk in its associated graph $G_H$ and $\mathbf{U}(H)=A(G_H)$, so the proof follows from the result \cite[p. 26]{bapat}: ``Let $G$ be a graph and let $v_i, v_j\in V(G)$. Then the $(v_i,v_j)^{th}$ entry of $A(G)^\ell$ equals the number of walks of length $\ell$ from $v_i$ to $v_j$ in $G$".
	\end{proof}

\begin{lemma}\label{diameter lemma}
	Let $H$ be a simple hypergraph with $e$-index $k$. If $u$ and $v$ are two vertices of $H$ such that $ed_H(u,v)=t$, then the matrices $I_k,\mathbf{U}(H), \mathbf{U}(H)^2,$ $\dots,\mathbf{U}(H)^t$ are linearly independent over $\mathbb{C}$. 
\end{lemma}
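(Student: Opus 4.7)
The plan is to transfer the classical argument that in an ordinary graph $G$, the powers $I, A(G), \ldots, A(G)^d$ are linearly independent whenever some pair of vertices is at distance $d$. The bridge is the preceding lemma, which identifies $(\mathbf{U}(H)^\ell)_{SS'}$ with the number of exact walks of length $\ell$ from $S$ to $S'$, together with the bijection between exact walks/paths in the loopless hypergraph $H$ and walks/paths in its associated graph $G_H$ noted in Section~\ref{sec3}. The problem therefore reduces to working inside $G_H$ with the adjacency matrix $A(G_H) = \mathbf{U}(H)$.

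By hypothesis there exist $S_0, S_t \in I(H)$ with $u \in S_0$, $v \in S_t$, and a shortest exact path $S_0, S_1, \ldots, S_t$ of length $t$ joining them. No exact walk of length less than $t$ can connect $S_0$ and $S_t$, since any walk in $G_H$ contains a path of no greater length, which would contradict the definition of $ed_H(u,v)$. The standard fact that every prefix of a shortest path is itself a shortest path yields $d_{G_H}(S_0, S_i) = i$ for $i = 0, 1, \ldots, t$. Now form the $(t+1) \times (t+1)$ matrix $M$ with entries $M_{\ell, i} := (\mathbf{U}(H)^\ell)_{S_0 S_i}$, for $0 \le \ell \le t$ and $0 \le i \le t$. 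Whenever $i > \ell$, no walk of length $\ell$ in $G_H$ joins $S_0$ to $S_i$, so $M_{\ell, i} = 0$; while $M_{\ell, \ell} \ge 1$ since the shortest path realising $d_{G_H}(S_0, S_\ell) = \ell$ provides at least one walk of length $\ell$. Hence $M$ is lower triangular with nonzero diagonal, and therefore invertible.

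Given a hypothetical linear dependence $\sum_{\ell = 0}^{t} c_\ell \mathbf{U}(H)^\ell = \mathbf{0}$, reading off the $(S_0, S_i)$ entry for each $i = 0, 1, \ldots, t$ yields the linear system $\sum_{\ell} c_\ell M_{\ell, i} = 0$, and the invertibility of $M$ forces $c_0 = c_1 = \cdots = c_t = 0$ (most transparently by back-substitution from $i=t$ down to $i=0$). The only delicate point is confirming that the quantity $ed_H(u,v)$, defined via exact paths, also rules out shorter exact walks between the chosen $S_0$ and $S_t$; this is handled cleanly by the walk/path correspondence in $G_H$, after which the remainder is routine linear algebra.
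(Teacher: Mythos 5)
Your proof is correct and follows essentially the same route as the paper: both reduce to the associated graph $G_H$ via $\mathbf{U}(H)=A(G_H)$, establish that $d_{G_H}(S_0,S_t)=t$ by the same contradiction argument, and then rely on the standard triangular-system argument for powers of an adjacency matrix along a shortest path. The only difference is that you unfold that standard argument explicitly (which, as a bonus, makes the connected/disconnected case distinction in the paper's proof unnecessary), whereas the paper cites it as a known lemma and handles disconnected $G_H$ separately via the block-diagonal structure.
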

\begin{proof}
	Since $ed_H(u,v)=t,$ there exists $S_u, S_v\in I(H)$ with $u\in S_u$ and $v\in S_v$ such that $S_u$ and $S_v$ are joined by an exact path of length $t$ in $H$. Corresponding to this exact path, there exist a path joining $S_u$ and $S_v$ in $G_H$. We show that $d_{G_H}(S_u, S_v)=t$. Suppose not. Then there exists a path joining $S_u$ and $S_v$ in $G_H$ of length $t'<t$. Corresponding to this path we get an exact path joining $S_u$ and $S_v$ of length $t'$ which is less than $t$ in $H$. This contradicts the fact that $ed_H(u,v)=t$. 
	
	Suppose $G_H$ is connected. Then the result follows from \cite[Lemma~3.2]{bapat}. 
	
	Suppose $G_H$ is disconnected. Then $G_H$ is the disjoint union of $k(\geq2)$ components, say $G_1, G_2,\dots, G_k$, and so $A(G_H)$ is a block diagonal matrix whose $i$-th diagonal block is $A(G_i)$ for $i=1,2, \ldots, k$. 	
	Since there exists a path joining the vertices $S_u$ and $S_v$ in $G_H$, it follows that $S_u, S_v\in V(G_i)$ for some $i\in \{1,2,\dots,k\}$. Hence by \cite[Lemma~3.2]{bapat}, $I_{n_i}, A(G_i), A(G_i)^2,$ $\dots,A(G_i)^t$ are linearly independent over $\mathbb{C}$, where $n_i=|V(G_i)|$. It follows that $I_k, A(G_H),$ $A(G_H)^2, \dots, A(G_H)^t$ are linearly independent over $\mathbb{C}$. Since $A(G_H)=\mathbf{U}(H)$, the result follows.
\end{proof}
\begin{thm}
	If $H$ is an exactly connected hypergraph having $\ell$ distinct unified eigenvalues, then $ED(H)\leq\ell-1$.
\end{thm}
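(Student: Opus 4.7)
The plan is to pick a pair of vertices $u,v$ realizing the exact diameter and feed them into the previous lemma to obtain a large linearly independent family of powers of $\mathbf{U}(H)$, and then cap the size of this family using the minimal polynomial of $\mathbf{U}(H)$, whose degree is controlled by the number of distinct eigenvalues.

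First I would set $t := ED(H)$ and choose vertices $u,v\in V(H)$ with $ed_H(u,v)=t$. Since $H$ is exactly connected, this value is finite. Lemma~\ref{diameter lemma} then guarantees that the $t+1$ matrices
\begin{equation*}
I_k,\ \mathbf{U}(H),\ \mathbf{U}(H)^2,\ \dots,\ \mathbf{U}(H)^{t}
\end{equation*}
are linearly independent over $\mathbb{C}$.

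Next I would invoke the fact that $\mathbf{U}(H)$ is real symmetric and hence diagonalizable. For any diagonalizable matrix, the degree of the minimal polynomial equals the number of distinct eigenvalues, so the minimal polynomial of $\mathbf{U}(H)$ has degree exactly $\ell$. Consequently $\mathbf{U}(H)^{\ell}$ can be written as a $\mathbb{C}$-linear combination of $I_k,\mathbf{U}(H),\dots,\mathbf{U}(H)^{\ell-1}$, which means that $\{I_k,\mathbf{U}(H),\dots,\mathbf{U}(H)^{\ell}\}$ is linearly dependent. In particular, any linearly independent family of consecutive non-negative powers of $\mathbf{U}(H)$ starting from $I_k$ can have at most $\ell$ members.

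Combining the two facts, $t+1\leq \ell$, i.e., $ED(H)=t\leq \ell-1$, which is the desired inequality. The only substantive step is the application of Lemma~\ref{diameter lemma} (already proved), so no new obstacle arises; the mild point to be careful about is simply the justification that the minimal polynomial of a symmetric matrix has degree equal to the count of distinct eigenvalues, which is standard.
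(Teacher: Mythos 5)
Your proposal is correct and follows essentially the same route as the paper: apply Lemma~\ref{diameter lemma} to a pair of vertices realizing the exact diameter, then bound the length of the linearly independent family of powers by the degree $\ell$ of the minimal polynomial of $\mathbf{U}(H)$. Your explicit justification that the minimal polynomial has degree $\ell$ because a real symmetric matrix is diagonalizable is a small but welcome addition the paper leaves implicit.
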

\begin{proof}
	By Lemma~\ref{diameter lemma}, $I, \mathbf{U}(H),$ $\dots, \mathbf{U}(H)^d$ are linearly independent, where $d=ED(H)$. Since, $H$ has $l$ distinct unified eigenvalues, the degree of the minimal polynomial of $\mathbf{U}(H)$ has degree $\ell$. It follows that there is no $m\geq \ell$ such that $I, \mathbf{U}(H),\dots,\mathbf{U}(H)^m$ are linearly independent. Thus $d<\ell$.
\end{proof}

\begin{thm}
Let $H$ be a hypergraph of $e$-index $k$ with eigenvalues $\lambda_1(H), \lambda_2(H), \dots, \lambda_k(H)$ such that $|\lambda_1(H)|\geq|\lambda_2(H)|\geq\dots\geq|\lambda_k(H)|$. Let $(x_1,x_2,\dots,x_k)$ be an orthonormal eigenvector corresponding to $\lambda_1(H)$ and let $\omega=\underset{i}{\min}\{x_i\}$. Then
	\begin{align*}
		ED(H)\leq \left\lceil \frac{\log{\frac{1-\omega^2}{\omega^2}}}{\log \frac{|\lambda_1(H)|}{|\lambda_2(H)|}} \right\rceil.
	\end{align*} 
\end{thm}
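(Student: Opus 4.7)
The plan is to adapt the classical spectral diameter bound for graphs to the hypergraph setting via the identification $\mathbf{U}(H)=A(G_H)$ from Section~\ref{sec3} together with the walk-counting lemma immediately preceding the statement. First I would set $d := ED(H)$ and fix vertices $u,v \in V(H)$ realising $ed_H(u,v)=d$, together with $S_u, S_v \in I(H)$ containing $u$ and $v$ respectively so that $S_u, S_v$ are joined by a shortest exact path of length $d$. As in the proof of Lemma~\ref{diameter lemma}, this forces $d_{G_H}(S_u,S_v)=d$. Consequently $[\mathbf{U}(H)^{\ell}]_{S_u S_v}=0$ for every $\ell < d$, because this entry counts walks of length $\ell$ from $S_u$ to $S_v$ in $G_H$.

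Next I would expand that vanishing entry via the spectral decomposition. Let $x_1,\ldots,x_k$ be an orthonormal eigenbasis of $\mathbf{U}(H)$ with $x_1$ the given principal eigenvector, so that $\mathbf{U}(H)^{\ell} = \sum_{i=1}^{k} \lambda_i(H)^{\ell}\, x_i x_i^{\top}$. Isolating the $i=1$ term converts the identity $[\mathbf{U}(H)^{\ell}]_{S_u S_v}=0$ into
$$\lambda_1(H)^{\ell}\, x_1(S_u)\, x_1(S_v) \;=\; -\sum_{i=2}^{k}\lambda_i(H)^{\ell}\, x_i(S_u)\, x_i(S_v).$$
Taking absolute values, bounding $|\lambda_i(H)|\le |\lambda_2(H)|$ for $i\ge 2$, and applying the Cauchy–Schwarz inequality together with the orthonormality identity $\sum_{i=1}^{k} x_i(S)^{2}=1$ (which forces $\sum_{i\ge 2}x_i(S)^{2}\le 1-\omega^{2}$ whenever $|x_1(S)|\ge\omega$) yields
$$|\lambda_1(H)|^{\ell}\,\omega^{2} \;\le\; |\lambda_2(H)|^{\ell}\,(1-\omega^{2}).$$

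Taking logarithms on both sides and rearranging gives $\ell \le \log((1-\omega^{2})/\omega^{2})/\log(|\lambda_1(H)|/|\lambda_2(H)|)$ for every $\ell<d$; feeding in $\ell = d-1$ and rounding up yields the claimed inequality. The main obstacle I expect is the positivity of the minimum coordinate $\omega$: the step $|x_1(S)|\ge\omega$ silently treats $\omega$ as nonnegative, which is guaranteed only when $G_H$ is connected, via the Perron–Frobenius theorem applied to the nonnegative symmetric matrix $\mathbf{U}(H)$; this same appeal also furnishes the strict inequality $|\lambda_1(H)|>|\lambda_2(H)|$ needed in order to divide by $\log(|\lambda_1(H)|/|\lambda_2(H)|)$. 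A minor secondary issue is a possible off-by-one when the logarithmic expression happens to be an integer, but the ceiling operator cleanly absorbs it in the generic case.
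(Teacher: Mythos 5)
Your argument is correct and reaches the same bound, but it takes a different route from the paper: the paper's entire proof is the observation that $\{ed_H(u,v)\mid u,v\in V(H)\}\subseteq\{d_{G_H}(S,S')\mid S,S'\in I(H)\}$, hence $ED(H)\le \mathrm{diam}(G_H)$, followed by a citation of Chung's diameter--eigenvalue theorem \cite{chung} applied to $G_H$ via $\mathbf{U}(H)=A(G_H)$. You share the reduction step (passing from the extremal pair $u,v$ to a pair $S_u,S_v$ with $d_{G_H}(S_u,S_v)=ED(H)$, exactly as in Lemma~\ref{diameter lemma}), but then you unpack the cited result and reprove it: vanishing of $[\mathbf{U}(H)^{\ell}]_{S_uS_v}$ for $\ell<d$, spectral decomposition, isolation of the Perron term, and the Cauchy--Schwarz estimate $\sum_{i\ge 2}|x_i(S_u)||x_i(S_v)|\le 1-\omega^2$. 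What your version buys is self-containedness and, more importantly, an explicit identification of the hypotheses the paper leaves implicit: the bound is meaningful only when $G_H$ is connected (so that the Perron vector is strictly positive and $\omega>0$, and $|\lambda_1(H)|>|\lambda_2(H)|$), a condition that is not guaranteed even for exactly connected $H$, since $G_H$ can have small extra components coming from parts of large edges; the paper silently inherits these hypotheses from \cite{chung}. Your off-by-one caveat at integer values of the logarithmic ratio is likewise an artifact already present in the cited theorem, so it does not put you at any disadvantage relative to the paper's proof.
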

\begin{proof}
	It can be seen that $\{ed_H(u,v)~|~u,v\in V(H)\}\subseteq\{d_{G_H}(S,S')~|~S,S'\in I(H)\}$. Taking maximum on both sides, we have that $ED(H)$ is less than or equal to the diameter of $G_H$. So the result follows from~\cite[Theorem~2]{chung} and using the fact that $\mathbf{U}(H)=A(G_H)$.
\end{proof}
\begin{thm}
	Let $H$ be a simple hypergraph with $e$-index $k$. Let $P_{\mathbf{U}(H)}(x)=x^{k}+c_1x^{k-1}+\dots+c_k$. Then, we have the following.
	\begin{itemize}
		\item[(i)] The sum of all the $2\times2$ principal minors of $\mathbf{U}(H)=c_2=\frac{1}{2}(\partial(H)-\underset{S\in I(H)}{\sum}d_H(S))$.
		\item[(ii)] The sum of all the $3\times3$ principal minors of $\mathbf{U}(H)=-c_3=2t$, where $t$ is the number of $e$-triangles in $H$. 
	\end{itemize}
\end{thm}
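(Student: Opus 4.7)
The proof proposal has two parts, corresponding to the two coefficients. Throughout I will use the standard identity for the characteristic polynomial of a square matrix $M$ of order $k$: if $P_M(x)=x^k+c_1 x^{k-1}+\cdots+c_k$, then $(-1)^i c_i$ equals the sum of all $i\times i$ principal minors of $M$. In particular $c_2$ equals the sum of all $2\times 2$ principal minors and $-c_3$ equals the sum of all $3\times 3$ principal minors. Since $H$ is simple, $\mathbf{U}(H)$ has zero diagonal and off-diagonal entries in $\{0,1\}$.

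For part (i), the plan is to combine the preceding theorem with the identity in equation~\eqref{EQ3}. The preceding theorem already gives that, for a simple hypergraph, the sum of all $2\times 2$ principal minors of $\mathbf{U}(H)$ equals $-\sum_{e\in E(H)}|\tau(e)|$, so it remains to rewrite this sum in terms of $\partial(H)$ and $\sum_{S\in I(H)} d_H(S)$. Applying \eqref{EQ3} to the simple case (where $E^*(H)=E(H)$, $m(e)=1$ for every $e$, and there are no loops so $\sum_{\{v\}\in E^*(H)} m(\{v\})=0$) yields
\begin{equation*}
\sum_{S\in I(H)} d_H(S)\;=\;2\sum_{e\in E(H)}|\tau(e)|\;+\;\partial(H),
\end{equation*}
and rearranging gives $-\sum_{e\in E(H)}|\tau(e)|=\tfrac{1}{2}\bigl(\partial(H)-\sum_{S\in I(H)} d_H(S)\bigr)$, which equals $c_2$ by the principal-minor identity. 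This step is essentially bookkeeping, so no real obstacle arises.

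For part (ii), the plan is to evaluate a generic $3\times 3$ principal minor directly. For any three distinct indices $S_i, S_j, S_\ell\in I(H)$, writing $a=\mathbf{U}_{S_iS_j}$, $b=\mathbf{U}_{S_iS_\ell}$, $c=\mathbf{U}_{S_jS_\ell}$, the principal minor indexed by $\{S_i,S_j,S_\ell\}$ is
\begin{equation*}
\det\begin{pmatrix}0 & a & b\\ a & 0 & c\\ b & c & 0\end{pmatrix}=2abc.
\end{equation*}
Since $a,b,c\in\{0,1\}$, this minor equals $2$ if and only if each pair among $\{S_i,S_j,S_\ell\}$ is adjacent in $G_H$, i.e.\ they form a triangle in $G_H$; otherwise the minor is $0$. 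Using the fact recorded in Section~\ref{sec3} that cycles in $G_H$ correspond bijectively to exact cycles in $H$ (modulo direction and starting point), I will argue that unordered triangles $\{S_i,S_j,S_\ell\}$ in $G_H$ are in bijective correspondence with $e$-triangles of $H$.

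The one subtle point — and the main thing that needs care — is verifying that this correspondence is indeed a bijection with the notion of ``$e$-triangle'' as counted by $t$, i.e.\ that each unordered triple of pairwise-adjacent parts of $I(H)$ yields exactly one $e$-triangle and vice versa. For simplicity of $H$, the edges forced by the three pairwise partitions are $e_1=S_i\cup S_j$, $e_2=S_j\cup S_\ell$, $e_3=S_i\cup S_\ell$, and one checks these are automatically three distinct edges of $H$ (if two coincided, then two of the $S$'s would be forced equal, contradicting distinctness). Summing, the total of all $3\times 3$ principal minors is $2t$, which equals $-c_3$.
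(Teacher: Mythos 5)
Your proof is correct, and it reaches the paper's conclusions by a somewhat different route. For part (i) the paper never touches the $2\times 2$ minors directly: it writes $c_2=\sum_{i<j}\lambda_i(H)\lambda_j(H)$, uses $tr(\mathbf{U}(H))=0$ together with the trace formula \eqref{L eq2} in the identity $\bigl(\sum_i\lambda_i\bigr)^2=\sum_i\lambda_i^2+2\sum_{i<j}\lambda_i\lambda_j$, and solves for $c_2$. You instead invoke the coefficient/principal-minor identity, take the value $-\sum_{e\in E(H)}|\tau(e)|$ of the $2\times2$ minor sum from the preceding theorem, and convert it via \eqref{EQ3}; since \eqref{L eq2} is itself obtained by combining \eqref{EQ3} with the same counting of nonzero entries, the two arguments are equivalent in content, but yours avoids the detour through eigenvalues and makes the first equality of the statement (minor sum $=c_2$) explicit rather than implicit. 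For part (ii) the paper simply cites the graph-theoretic fact that the sum of the $3\times3$ principal minors of $A(G)$ is twice the number of triangles and transports it through $\mathbf{U}(H)=A(G_H)$; you reprove that fact from scratch by computing the generic minor $2abc$ and then carry out the same triangle/$e$-triangle correspondence, with the useful extra observation that the three edges $S_i\cup S_j$, $S_j\cup S_\ell$, $S_i\cup S_\ell$ forced by a triangle in $G_H$ are automatically distinct, which is exactly what is needed for the induced exact walk of length three to qualify as an exact cycle. The paper's version is shorter by citation; yours is self-contained and verifies the one point (the bijection with $e$-triangles as defined) that the paper leaves to the reader.
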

\begin{proof}
	\begin{itemize}
		\item [(i)] It is clear that
		$c_2=\underset{1\leq i\leq j\leq k}{\sum}\lambda_i(H)\lambda_j(H)$ and $tr(\mathbf{U}(H))=0$. Applying these along with \eqref{L eq2} in the following identity, we get the result.
		\begin{align*}
			\big(\sum_{i=1}^{k}\lambda_i(H)\big)^2=\sum_{i=1}^{k}\lambda_i(H)^2+~2\sum_{1\leq i< j\leq k}\lambda_i(H)\lambda_j(H).
		\end{align*} 
		\item[(ii)] 
		It is known that $-c_3$ is the sum of all $3\times 3$ principal minors of $\mathbf{U}(H)$. Notice that corresponding to an $e$-triangle of $H$, there is a triangle in $G_H$ and vice versa. Also, $\mathbf{U}(H)=A(G_H)$. So, the proof follows from the result on graphs which states that for a simple graph $G$, the sum of all the $3\times3$ principal minors of $A(G)$ is twice the number of triangles in $G$~\cite[p.~26]{bapat}.
	\end{itemize}
\end{proof}	
In the next result, we characterize all the simple hypergraphs having no odd exact cycles in terms of its unified spectrum and its characteristic polynomial.
\begin{thm}
	Let $H$ be a simple hypergraph with $e$-index $k$. Then the following are equivalent.
	\begin{itemize}
		\item[(i)] $H$ has no odd exact cycle;
		\item[(ii)] If $P_{\mathbf{U}(H)}(x)=x^{k}+c_1x^{k-1}+\dots+c_{k-1}x+c_k$, then $c_{2r+1}=0$ for all $r=0,1,2,\dots$;
		\item[(iii)] The eigenvalues of $\mathbf{U}(H)$ are symmetric with respect to the origin, i.e., if $\lambda$ is an
		eigenvalue of $\mathbf{U}(H)$ with multiplicity $r$, then $-\lambda$ is also an eigenvalue of $\mathbf{U}(H)$ with
		the same multiplicity $r$.
	\end{itemize}
\end{thm}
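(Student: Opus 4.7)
The plan is to reduce the three-way equivalence to the classical spectral characterization of bipartite graphs by invoking the identity $\mathbf{U}(H)=A(G_H)$ established in Section~\ref{sec3}. Since $H$ is simple, every edge of $H$ has multiplicity $1$ and $H$ is loopless, so the diagonal of $\mathbf{U}(H)$ is zero and all off-diagonal entries lie in $\{0,1\}$; hence $G_H$ is itself a simple loopless graph. Moreover, as noted in Section~\ref{sec5}, exact cycles of $H$ correspond bijectively (preserving length) to cycles of $G_H$. Consequently, (i) is equivalent to the statement that $G_H$ is bipartite.

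For (i)$\Leftrightarrow$(iii), I would invoke the standard spectral theorem for bipartite graphs. In the forward direction, writing a bipartition $V(G_H)=V_1\cup V_2$, the matrix $A(G_H)$ takes the block form $\left(\begin{smallmatrix}\mathbf{0}&B\\B^{T}&\mathbf{0}\end{smallmatrix}\right)$, and conjugating by the signature matrix $D=\mathrm{diag}(I_{|V_1|},-I_{|V_2|})$ yields $D\,A(G_H)\,D^{-1}=-A(G_H)$. Thus $A(G_H)$ and $-A(G_H)$ are similar, which gives (iii). For the converse, one applies Perron--Frobenius on each connected component of $G_H$: symmetry of the spectrum forces $-\lambda_{\max}$ to appear on every component, from which the signs of a suitable eigenvector recover the bipartition.

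The equivalence (ii)$\Leftrightarrow$(iii) is purely algebraic. Writing $P_{\mathbf{U}(H)}(x)=\prod_{i=1}^{k}(x-\lambda_i(H))$, the multiset of roots is invariant under $\lambda\mapsto -\lambda$ exactly when $P_{\mathbf{U}(H)}(-x)=(-1)^{k}P_{\mathbf{U}(H)}(x)$; comparing coefficients of $x^{k-j}$ on both sides, this holds if and only if $c_j=0$ for every odd $j$, which is (ii).

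The main obstacle is not technical difficulty but careful bookkeeping: I must verify that the correspondence between exact cycles of $H$ and cycles of $G_H$ is really length-preserving and one-to-one at the level of lengths (in particular, that an ``exact cycle'' in the hypergraph sense, requiring three distinct edges and distinct internal parts, matches a ``cycle of length $\geq 3$'' in $G_H$), and that the standard bipartite--spectrum theorem is applied componentwise so that disconnectedness of $G_H$ causes no trouble. Once these are confirmed, the theorem follows immediately from the graph-theoretic result combined with $\mathbf{U}(H)=A(G_H)$.
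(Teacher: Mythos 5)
Your proposal is correct and follows essentially the same route as the paper: both reduce the theorem to the classical spectral characterization of bipartite graphs via the identity $\mathbf{U}(H)=A(G_H)$ and the length-preserving correspondence between exact cycles of $H$ and cycles of $G_H$. The only difference is that you spell out the proof of the bipartite--spectrum equivalence (signature conjugation, Perron--Frobenius componentwise, coefficient comparison), whereas the paper simply cites the corresponding result from Bapat.
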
	
\begin{proof}
	It is clear that for each exact cycle of length $k$ in $H$, there exists a cycle of length $k$ in $G_H$ and vice versa. It follows that $H$ has no odd exact cycle if and only if $G_H$ has no odd cycle. Consequently, $G_H$ is bipartite if and only if $H$ has no odd exact cycle. So, the result follows from \cite[Theorem~3.14]{bapat} and from the fact that the spectrum of $\mathbf{U}(H)$ and $A(G_H)$ are the same.
\end{proof}


\section{Elementary $q$-subhypergraphs and the determinant of the unified matrix of a hypergraph}\label{sec6}
In this section, we introduce a special subhypergraph of a hypergraph, namely, an elementary $q$-subhypergraph. Then we study the interplay between these subhypergraphs and the invariants of the unified matrix such as its determinant, coefficients of its characteristic polynomial.

\begin{defn}\normalfont\label{elementary}
Let $H$ be a simple hypergraph with $e$-index $k$. For $q\in\{2,3,\dots,k\}$, a triplet $(H', \mathcal{C}(H'), \mathcal{E}(H'))_q$ is said to be an \emph{elementary $q$-subhypergraph of $H$} if the following conditions are satisfied:
\begin{itemize}
	\item[(i)] $H'$ is a subhypergraph of $H$;
	\item[(ii)] $\mathcal{C}(H')$ is a set, which is either empty or having some disjoint exact cycles $C_1, C_2,\dots,C_r$ in $H'$,
	 and $\mathcal{E}(H')$ is a set, which is either empty or having $2$-partitions of some edges $e_1, e_2,\dots,e_t$ in $H'$ such that $\underset{C\in \mathcal{C}(H')}{\sum}l(C)+2|\mathcal{E}(H')|=q$;
	\item[(iii)] Let $\mathcal{C}^*(H')$ denote the union of the covers of the disjoint exact cycles in $\mathcal{C}(H')$.  If $S\in \mathcal{C}^*(H')\cup \left(\underset{T\in \mathcal{E}(H')}{\bigcup}T\right)$, then $S$ belongs to either $\mathcal{C}^*(H')$ or at most one set in $\mathcal{E}(H')$.
\end{itemize}
\end{defn}
Notice that $\mathcal{C}(H')$ and $\mathcal{E}(H')$ cannot be empty simultaneously. For otherwise, $\underset{C\in \mathcal{C}(H')}{\sum}l(C)+2|\mathcal{E}(H')|\neq q$, since $q\geq 2$.
Further, notice that if $\mathcal{C}(H')\neq \Phi$, then $|\mathcal{C}(H')|=\sum_{i=1}^{r}l(C_i)$.

	As an edge in a graph $G$ has exactly one $2$-partition and an exact cycle of length $n$ in $G$ is a cycle of length $n$, it is clear that an elementary $q$-subhypergraph of $G$ becomes an elementary subgraph of $G$ on $q$ vertices.
\begin{example}\normalfont
	\begin{itemize}
		\item[(i)] 	Consider the hypergraph $H_1$ shown in Figure~\ref{fig3}. 	
Take $\mathcal{C}(H_1)=\{C_1\}$, where $C_1$ is the exact cycle $\{1\},\{2\},\{3\},\{1\}$. Then $\mathcal{C}^*(H_1)=\{\{1\},\{2\},\{3\}\}$. Take $\mathcal{E}(H_1)=\{\{\{4\},\{5,6\}\}, \{\{5\},\{4,6\}\}, \{\{6\},\{4,5\}\}\}$, which is the set of all $2$-partitions of the edge $e=\{4,5,6\}$.
		Notice that $\underset{C\in \mathcal{C}(H_1)}{\sum}l(C)+2|\mathcal{E}(H_1)|=9$.
		Also, any $S\in \mathcal{C}^*(H_1)\cup \left(\underset{T\in \mathcal{E}(H_1)}{\bigcup}T\right)$ belongs to either $\mathcal{C}^*(H_1)$ or at most one set in $\mathcal{E}(H_1)$. Thus, $(H_1,\mathcal{C}(H_1),\mathcal{E}(H_1))_9$ is an elementary $9$-subhypergraph of $H_1$. 
		
		\item[(ii)] 	Let $H_1'$ be the induced subhypergraph of $H_1$ induced by $\{1,2,3\}$. Take $\mathcal{C}(H_1')=\{C_1\}$, where $C_1$ is the exact cycle as given in part~(i). Take $\mathcal{E}(H_1')=\Phi$. Notice that  $\underset{C\in \mathcal{C}(H_1')}{\sum}l(C)+2|\mathcal{E}(H_1')|=3$. Thus, $(H_1',\mathcal{C}(H_1'),\mathcal{E}(H_1'))_3$ is an elementary $3$-subhypergraph of $H$.
		\begin{figure}[ht]
			\begin{center}
				\includegraphics[scale=1.2]{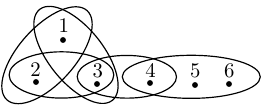}
			\end{center}\caption{The hypergraph $H_1$}\label{fig3}
		\end{figure}
	\end{itemize}
	\end{example}
\begin{example}\normalfont 
	Consider the hypergraph $H_2$ shown in Figure~\ref{fig1}. 
	Take $\mathcal{C}_1(H_2)=\{C_1, C_2\}$, where $C_1$ and $C_2$ are the exact cycles $\{1\},\{2,3\},$ $\{7\},\{8\},\{1\}$ and $\{3\},\{4,5\},\{6\},\{1,2\},\{3\}$, respectively.
	Then, $\mathcal{C}_1^*(H_2)=\{\{1\},\{3\},\{6\}, \{7\},\{8\},\{1,2\},\{2,3\},\{4,5\}\}$. Take $\mathcal{E}_1(H_2)=\{\{\{2\},\{1,3\}\}, \{\{4\},\{3,5\}\},$ $\{\{5\},\{3,4\}\}\}$, which is a set of $2$-partitions of the edges $e_1=\{1,2,3\}$ and $e_2=\{3,4,5\}$. Notice that $\underset{C\in \mathcal{C}_1(H_2)}{\sum}l(C)+2|\mathcal{E}(H_2)|=14$.
	Also, any $S\in \mathcal{C}_1^*(H_2)\cup \left(\underset{T\in \mathcal{E}(H_2)}{\bigcup}T\right)$ belongs to either $\mathcal{C}_1^*(H_2)$ or at most one set in $\mathcal{E}_1(H_2)$. Thus, $(H_2,\mathcal{C}_1(H_2),\mathcal{E}_1(H_2))_{14}$ is an elementary $14$-subhypergraph of $H_2$. 
	
	For the same hypergraph $H_2$, take $\mathcal{C}_2(H_2)=
	\{C\}$, where $C$ is the exact cycle $\{1\},\{8\},\{7\},\{2,3\},\{1\}$. Take $\mathcal{E}_2(H_2)=\{\{\{2\},\{3,7\}\},\{\{3\},\{1,2\}\},\{\{4\},\{3,5\}\},\{\{5\},\{3,4\}\},$ $\{\{6\},\{4,5\}\}\}$, which is a set of $2$-partitions of the edges $e_1$, $e_2$, $e_3=\{2,3,7\}$ and $e_4=\{3,4,5\}$. Then $(H_2,\mathcal{C}_2(H_2),\mathcal{E}_2(H_2))_{14}$ is an elementary $14$-subhypergraph of $H_2$.
	\begin{figure}[ht]
	\begin{center}
			\includegraphics[scale=1.3]{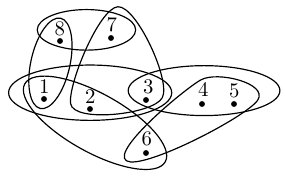}
		\end{center}\caption{The hypergraph $H_2$}\label{fig1}
	\end{figure}
\end{example}
\begin{lemma}\label{det lem}
	Let $H$ be a simple hypergraph with $e$-index $k$. For $t=2,3,\dots,k$, there is a one to one correspondence between the set of all elementary $t$-subhypergraphs of $H$ and the set of all elementary subgraphs of $G_H$ on $t$ vertices. 
\end{lemma}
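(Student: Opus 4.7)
The plan is to exhibit explicit mutually inverse maps between the two sets. Given an elementary $t$-subhypergraph $(H', \mathcal{C}(H'), \mathcal{E}(H'))_t$ of $H$, I will associate the subgraph $G'$ of $G_H$ whose vertex set is $\mathcal{C}^*(H') \cup \bigcup_{T \in \mathcal{E}(H')} T \subseteq I(H)$ and whose edges are: (a) the edge $SS'$ of $G_H$ for each $\{S,S'\} \in \mathcal{E}(H')$ (this is indeed an edge of $G_H$, since $\{S,S'\}$ being a $2$-partition of some $e \in E(H)$ means $S \sim S'$); and (b) for each exact cycle $S_0, e_1, S_1, \ldots, e_\ell, S_0$ in $\mathcal{C}(H')$, the $\ell$ edges $S_0S_1, S_1S_2, \ldots, S_{\ell-1}S_0$, which likewise lie in $G_H$. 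Condition (iii) of Definition~\ref{elementary} forces the cycle-covers and the $2$-partitions to be pairwise disjoint as subsets of $I(H)$, so the components of $G'$ are vertex-disjoint; each such component is either a single edge (from (a)) or a cycle (from (b)), and hence $G'$ is elementary. Its vertex count is $\sum_{C \in \mathcal{C}(H')} l(C) + 2|\mathcal{E}(H')| = t$.

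Conversely, given an elementary subgraph $G'$ of $G_H$ on $t$ vertices, decompose $G'$ into its components, each of which is either an edge or a cycle of $G_H$. For each edge component $SS'$, the adjacency in $G_H$ means $\{S,S'\}$ is a $2$-partition of the edge $S \cup S' \in E(H)$; collect these $2$-partitions into $\mathcal{E}(H')$. For each cycle component $S_0 S_1 \cdots S_{\ell-1} S_0$ of $G_H$, the explicit correspondence in Section~\ref{sec5} between cycles in $G_H$ and exact cycles in $H$ yields an exact cycle $S_0, e_1, S_1, \ldots, e_\ell, S_0$ in $H$ with $e_i = S_{i-1} \cup S_i$; collect these into $\mathcal{C}(H')$. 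Take $H'$ to be the subhypergraph of $H$ whose edges are exactly the $e_i$ thus produced. The counting identity $\sum_C l(C) + 2|\mathcal{E}(H')| = t$ then just restates that $G'$ has $t$ vertices, while the vertex-disjointness of the components of the elementary subgraph $G'$ translates directly into condition (iii) of Definition~\ref{elementary}.

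Verifying that the two maps are mutual inverses is then routine, since each map is read off the components: starting from a triplet, passing to $G'$, and returning reproduces the same $2$-partitions and the same exact cycles, and vice versa. The main obstacle I foresee is not in the mathematical content but in the bookkeeping: strictly speaking, the triplet records an auxiliary subhypergraph $H'$ that is not uniquely recovered from the subgraph $G'$ alone, so for the correspondence to be a genuine bijection one has to fix $H'$ canonically as the subhypergraph of $H$ generated by the edges appearing in $\mathcal{C}(H')$ and $\mathcal{E}(H')$. This is consistent with the examples preceding the lemma and with the role $H'$ plays in the subsequent determinantal applications.
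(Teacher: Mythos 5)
Your proof is correct and follows essentially the same route as the paper's: both build the bijection from the edge-component $\leftrightarrow$ $2$-partition and cycle $\leftrightarrow$ exact-cycle correspondences, you merely present the two directions in the opposite order. Your closing remark about fixing $H'$ canonically as the subhypergraph generated by the edges appearing in $\mathcal{C}(H')$ and $\mathcal{E}(H')$ is a genuine subtlety that the paper's proof also relies on (its map $f$ always outputs that induced $H'$, and surjectivity is only asserted for such triplets) but does not state explicitly.
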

\begin{proof}
	Let $X$ be the set of all elementary subgraphs of $G_H$ on $t$ vertices and let $Y$ be the set of elementary $t$-subhypergraphs of $H$. We shall show that there is a bijection from $X$ into $Y$.
	
		Let $G'\in X$ . We shall construct an elementary $t$-subhypergraph $(H',\mathcal{C}(H'), \mathcal{E}(H'))_t$ of $H$ using $G'$. 
			 For this, first we form the subhypergraph $H'$ of $H$ induced by all the edges $S\cup S'$, where $SS'\in E(G')$.
		 The method of choosing $\mathcal{C}(H')$ and $\mathcal{E}(H')$ is given below.
		\begin{itemize}
			\item If $G'$ has no cycle, then we take $\mathcal{C}(H')=\Phi$. 
			\item If $G'$ has a cycle of length $n$, then corresponding to this cycle there exists a unique exact cycle of length $n$ in $H$. So, we take $\mathcal{C}(H')$ as the set of all exact cycles of $H$ corresponding to all the cycles in $G'$. Notice that $\mathcal{C}(H')$ is a set of disjoint exact cycles in $H$, since all the cycles in $G'$ are pairwise disjoint.
			\item If $G'$ has no edge component, then we take $\mathcal{E}(H')=\Phi$.
			\item  If $G'$ has an edge component, say $SS'$, then $\{S, S'\}$ is a $2$-partition of an edge in $H'$. So, we take $\mathcal{E}(H')=\{\{S, S'\}~|~SS'~\text{is an edge component in}~G'\}$.
		\end{itemize} 
Note that $\underset{C\in\mathcal{C}(H')}{\sum}l(C)+2|\mathcal{E}(H')|=t$, since $G'$ has $t$ vertices. Also, $(H',\mathcal{C}(H'), \mathcal{E}(H'))_t$ satisfies the condition (iii)~of~Definition~\ref{elementary}, since each vertex in $G'$ belongs to at most one cycle or at most one edge. Thus $(H',\mathcal{C}(H'), \mathcal{E}(H'))_t\in Y$.

Now, by using the above construction, we define a map $f:X\to Y$ by $f(G')=(H',\mathcal{C}(H'), \mathcal{E}(H'))_t$ for all $G'\in X$.
	
It is clear that the constructions of $\mathcal{C}(H')$ and $\mathcal{E}(H')$ mentioned above depends only on the cycles and the edge components of $G'$. This shows the uniqueness of $(H',\mathcal{C}(H'), \mathcal{E}(H'))_t$ and so the map $f$ is well-defined.

To prove that the map $f$ is $1-1$. 
Let $G', G''\in X$ with $G\neq G'$. Let $(H',\mathcal{C}(H'), \mathcal{E}(H'))_t$ and $(H'',\mathcal{C}(H''), \mathcal{E}(H''))_t$ be the elementary $t$-subhypergraphs that are associated to $G'$ and $G''$ respectively, under $f$.
 Since $G'\neq G''$, they differ in at least one of their components. So, either $\mathcal{C}(H')\neq \mathcal{C}(H'')$ or $\mathcal{E}(H')\neq \mathcal{E}(H'')$ according as the cycle components or the edge components of $G'$ and $G''$ are different. Consequently, $(H',\mathcal{C}(H'), \mathcal{E}(H'))_t$ and $(H'',\mathcal{C}(H''), \mathcal{E}(H''))_t$ are distinct. So the map $f$ is $1-1$. 
 
Next, we show that the map $f$ is onto.
Let $(H',\mathcal{C}(H'), \mathcal{E}(H'))_t\in Y$. We shall construct an elementary subgraph  $G'$ of $G_H$ using $(H',\mathcal{C}(H'), \mathcal{E}(H'))_t$.

First we form two sets $A$ and $B$ as follows.
\begin{itemize}
	\item If $\mathcal{C}(H')=\Phi$, then we take $A=\Phi$.
	\item If $\mathcal{C}(H')\neq \Phi$, then corresponding to each exact cycle in $\mathcal{C}(H')$, we get a cycle in $G_H$. We take $A$ as the set of all such cycles in $G_H$.
	\item  If $\mathcal{E}(H')= \Phi$, then we take $B=\Phi$.
	\item If $\mathcal{E}(H')\neq \Phi$, then for each $2$-partition in $\mathcal{E}(H')$ corresponds to an edge in $G_H$. We take $B$ as the set of all such edges in $G_H$. 
\end{itemize}  

Since $C(H')$ and $E(H')$ cannot be empty simultaneously, it follows that $A$ and $B$ cannot be empty simultaneously.  Also, since $(H',\mathcal{C}(H'), \mathcal{E}(H'))_t$ satisfies the condition (iii)~of~Definition~\ref{elementary} and the exact cycles in $\mathcal{C}(H')$ are disjoint, we have the elements of $A$ and $B$ are mutually disjoint.
Now, we take $G'$ as the disjoint union of the disjoint cycles in $A$ and the disjoint edges in $B$. Then $G'$ has $t$ vertices, since $\underset{C\in \mathcal{C}(H')}{\sum}l(C)+2|\mathcal{E}(H')|=t$. Thus, $G'\in X$.

Also, notice that $f(G')=(H',\mathcal{C}(H'), \mathcal{E}(H'))_t$. Therefore, $f$ is onto. This completes the proof.
\end{proof}
\begin{notation}\normalfont
Let $H$ be a simple hypergraph. Let $(H',\mathcal{C}(H'), \mathcal{E}(H'))_t$ be an elementary $t$-subhypergraph of $H$. Then we denote $c(H'):=|\mathcal{C}(H')|$ and $e(H'):=|\mathcal{E}(H')|$.

It is clear that if $H$ is a simple graph and $H'$ is its elementary subgraph, then $c(H')$ is the number of cycle components in $H'$ and $e(H')$ is the number of edge components in $H'$. 
\end{notation}

In the next result, we express the determinant of the unified matrix of a hypergraph using some parameters of its elementary $k$-subhypergraphs.
\begin{thm}
	Let $H$ be a simple hypergraph with $e$-index $k$. Then,
	\begin{equation}\label{det eq}
	det(\mathbf{U}(H))=\displaystyle\sum(-1)^{k-c(H')-e(H')}2^{c(H')},
	\end{equation} where the summation is over all elementary $k$-subhypergraphs $(H',\mathcal{C}(H'), \mathcal{E}(H'))_k$ of $H$.
\end{thm}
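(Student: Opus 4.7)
The plan is to reduce the statement to the classical Harary/Sachs coefficient formula for the determinant of the adjacency matrix of a graph, and then to translate the sum via the bijection established in Lemma~\ref{det lem}.

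First, I would recall that $\mathbf{U}(H) = A(G_H)$, where $G_H$ is the associated graph on the vertex set $I(H)$, which has cardinality $k$. Hence $\det(\mathbf{U}(H)) = \det(A(G_H))$. Next, I would invoke Harary's classical formula: for any loopless graph $G$ on $n$ vertices,
$$\det(A(G)) = \sum_{G'}(-1)^{n - c(G') - e(G')}\, 2^{c(G')},$$
where the sum runs over all spanning elementary subgraphs $G'$ of $G$, and $c(G')$, $e(G')$ denote the numbers of cycle and edge components of $G'$, respectively. Applying this to $G_H$ (with $n=k$) gives
$$\det(\mathbf{U}(H)) = \sum_{G'}(-1)^{k - c(G') - e(G')}\, 2^{c(G')},$$
where $G'$ now ranges over elementary subgraphs of $G_H$ on $k$ vertices.

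To finish, I would invoke the bijection $f$ of Lemma~\ref{det lem} with $t=k$, which matches elementary $k$-subhypergraphs $(H',\mathcal{C}(H'),\mathcal{E}(H'))_k$ of $H$ with elementary subgraphs $G'=f(G')$ of $G_H$ on $k$ vertices. From the construction of $f$ in the proof of that lemma, the cycle components of $G'$ are in one-to-one correspondence with the exact cycles in $\mathcal{C}(H')$ and the edge components of $G'$ are in one-to-one correspondence with the $2$-partitions in $\mathcal{E}(H')$. In particular
$$c(G') = |\mathcal{C}(H')| = c(H') \quad\text{and}\quad e(G') = |\mathcal{E}(H')| = e(H').$$
Substituting these equalities into the displayed sum and reindexing by the elementary $k$-subhypergraphs of $H$ yields \eqref{det eq}.

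There is really no hard step here: the only thing to verify carefully is that the statistics $c(\cdot)$ and $e(\cdot)$ are preserved under $f$, which is immediate from the description of $f$, and that ``elementary subgraph of $G_H$ on $k$ vertices'' coincides with ``spanning elementary subgraph of $G_H$,'' which holds because $G_H$ has exactly $k$ vertices. The mild subtlety worth spelling out is that condition (iii) of Definition~\ref{elementary} is precisely what is needed so that the edges obtained from $\mathcal{E}(H')$ together with the cycles obtained from $\mathcal{C}(H')$ form a vertex-disjoint family in $G_H$, which is exactly the defining property of an elementary subgraph; without this, $f$ would fail to produce a legitimate elementary subgraph.
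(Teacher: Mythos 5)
Your proposal is correct and follows essentially the same route as the paper: identify $\det(\mathbf{U}(H))$ with $\det(A(G_H))$, apply the classical Harary/Sachs determinant formula to $G_H$, and transfer the sum through the bijection of Lemma~\ref{det lem} using $c(G')=c(H')$ and $e(G')=e(H')$. The only difference is that you spell out the role of condition (iii) of Definition~\ref{elementary} more explicitly than the paper does, which is a welcome clarification rather than a deviation.
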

\begin{proof}
Consider an elementary subgraph $G'$ of $G_H$.  By Lemma~\ref{det lem}, there exists a unique elementary $k$-subhypergraph $H'$ of $H$ corresponding to $G'$.
	Also, notice that $c(G')=c(H')$ and $e(G')=e(H')$. So, the proof follows from the result~\cite[Theorem~3.8]{bapat}: ``For a graph $G$ with $n$ vertices,
	$det(A(G))=\displaystyle\underset{G'}{\sum}(-1)^{n-c(G')-e(G')}2^{c(G')}$, where the summation is over all elementary spanning subgraphs $G'$ of $G$". 
\end{proof}
\begin{example}\normalfont
	Let $H$ be a hypergraph shown in Figure~\ref{Det}. Clearly, the $e$-index of $H$ is $10$. 
		\begin{figure}[ht]
		\begin{center}
			\includegraphics[scale=1.3]{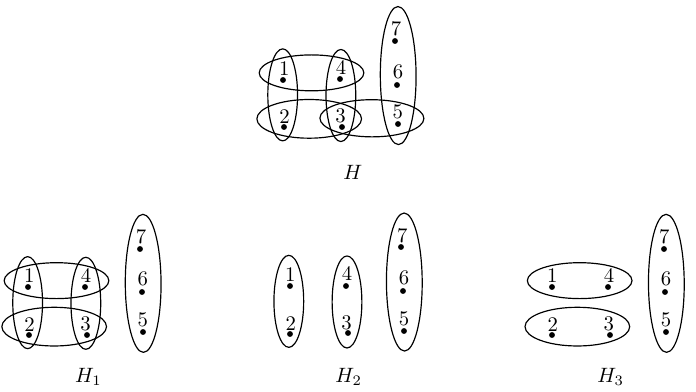}
		\end{center}\caption{The hypergraph $H$ and its elementary $10$-subhypergraphs $H_1$, $H_2$, $H_3$.}\label{Det}
	\end{figure}
	The elementary $10$-subhypergraphs of $H$ are the following:
	\begin{itemize}
		\item[(i)] $(H_1, \mathcal{C}(H_1), \mathcal{E}(H_1))_{10}$, where $H_1$ is shown in Figure~\ref{Det}, $\mathcal{C}(H_1)=\{\{1\},\{2\},\{3\},\{4\}\}$ and $\mathcal{E}(H_1)=\tau(\{5,6,7\})$; 
		\item[(ii)] $(H_2, \mathcal{C}(H_2), \mathcal{E}(H_2))_{10}$, where $H_2$ is shown in Figure~\ref{Det}, $\mathcal{C}(H_2)=\Phi$ and $\mathcal{E}(H_2)=\tau(\{5,6,7\})\cup \tau(\{1,2\})\cup \tau(\{3,4\})$;
		\item[(iii)] $(H_3, \mathcal{C}(H_3), \mathcal{E}(H_3))_{10}$, where $H_3$ is shown in Figure~\ref{Det}, $\mathcal{C}(H_3)=\Phi$ and $\mathcal{E}(H_3)=\tau(\{5,6,7\})\cup \tau(\{1,4\})\cup \tau(\{2,3\})$.
	\end{itemize}

 From the formula~\eqref{det eq}, $det(\mathbf{U}(H))=(-1)^{10-1-3}(2)+(-1)^{10-0-5}(2)^0+(-1)^{10-0-5}(2)^0=0$. This fact is also evident since $\mathbf{U}(H)$ has linearly dependent columns.
\end{example}
\begin{thm}
	Let $H$ be a simple hypergraph with $e$-index $k$ and let $P_{\mathbf{U}(H)}(x)=x^{k}+c_1x^{k-1}+\dots+c_k$. Then for $t\in\{1,2,\dots,k\}$,
	\begin{align*} c_t=\displaystyle\sum(-1)^{c(H')+e(H')}2^{c(H')},
	\end{align*} where the summation is over all elementary $t$-subhypergraphs $(H',\mathcal{C}(H'), \mathcal{E}(H'))_t$ of $H$.
\end{thm}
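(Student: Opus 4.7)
The plan is to reduce the statement to the classical Sachs coefficient theorem for the characteristic polynomial of the adjacency matrix of a simple graph and then translate through the bijection furnished by Lemma~\ref{det lem}. The key preliminary observations are that, since $H$ is simple, every multiplicity is $1$ and hence $G_H$ is itself a simple graph, and that $\mathbf{U}(H)=A(G_H)$ by the construction in Section~\ref{sec3}. In particular $P_{\mathbf{U}(H)}(x)=P_{A(G_H)}(x)$, so the coefficients $c_t$ of the two polynomials agree.

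First I would invoke the general Sachs--Harary coefficient theorem for $G_H$: for a simple graph $G$ on $n$ vertices with $P_{A(G)}(x)=x^{n}+c_1x^{n-1}+\dots+c_n$ and for each $t\in\{1,2,\dots,n\}$,
\begin{equation*}
c_t \;=\; \sum_{G'}(-1)^{c(G')+e(G')}\,2^{c(G')},
\end{equation*}
where $G'$ ranges over the elementary subgraphs of $G$ with exactly $t$ vertices, and $c(G')$, $e(G')$ denote the numbers of cycle and edge components. This is the natural extension of~\cite[Theorem~3.8]{bapat} already cited in the previous proof for the case $t=k$; the standard derivation is to apply the Leibniz expansion of $\det(xI-A(G))$ to each principal $t\times t$ submatrix of $A(G)$ and recognise the surviving terms as disjoint unions of edges and cycles. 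Applied to $G=G_H$ (so $n=k$), this yields a formula for $c_t$ summed over elementary subgraphs of $G_H$ on $t$ vertices.

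Next I would use Lemma~\ref{det lem} to rewrite that sum as a sum over elementary $t$-subhypergraphs of $H$. The bijection $f:X\to Y$ constructed in the proof of Lemma~\ref{det lem} sends the cycle components of $G'$ to the exact cycles of $\mathcal{C}(H')$ (length-preservingly) and the edge components of $G'$ to the $2$-partitions in $\mathcal{E}(H')$. Consequently $c(G')=c(H')$ and $e(G')=e(H')$ for corresponding pairs, and substituting into the Sachs formula gives exactly the desired identity.

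The only mild obstacle is that the preceding theorem only invoked the $t=k$ case of Sachs' theorem (the determinant), whereas here we need the full coefficient version. If one prefers a self-contained route, one can expand $P_{\mathbf{U}(H)}(x)=\sum_t (-1)^t e_t\, x^{k-t}$ where $e_t$ is the sum of $t\times t$ principal minors of $\mathbf{U}(H)$, apply the determinant argument of Lemma~\ref{det lem} to each principal submatrix indexed by a $t$-subset of $I(H)$, and assemble the contributions; sign bookkeeping then reproduces $(-1)^{c(H')+e(H')}2^{c(H')}$ exactly as in the $t=k$ case. Either way, the theorem follows immediately.
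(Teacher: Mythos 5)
Your proposal is correct and follows essentially the same route as the paper: identify $\mathbf{U}(H)$ with $A(G_H)$, apply the classical coefficient (Sachs) theorem for simple graphs, and transfer the sum via the bijection of Lemma~\ref{det lem} using $c(G')=c(H')$ and $e(G')=e(H')$. The only detail worth adding is the trivial case $t=1$, where the sum is empty and $c_1=0$ because $H$ (hence $G_H$) has no loops, which the paper notes explicitly.
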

\begin{proof}	
Consider an elementary $t$-subhypergraph $(H',\mathcal{C}(H'), \mathcal{E}(H'))_t$ of $H$. Correspondingly, there exists an elementary subgraph $G'$ of $G$ on $t$ vertices, by Lemma~\ref{det lem}. Since, there is a one-to-one correspondence between $\mathcal{C}(H')$ and the set of all the cycle components of $G'$, it follows that $c(G')=c(H')$. Similarly, we have $e(G')=e(H')$. Hence the proof follows from \cite[Theorem~3.10]{bapat}, since $\mathbf{U}(H)=A(G_H)$. Note that $c_1=0$, since $H$ has no loops.
\end{proof}
\begin{thm}
		Let $H$ be a simple hypergraph with $e$-index $k$ and let $P_{\mathbf{U}(H)}(x)=x^{k}+c_1x^{k-1}+\dots+c_{k-1}x+c_{k}$.
	\begin{itemize}
		\item[(i)]The odd exact girth $oegr(H)$ is equal to the index of the first non-zero coefficient from the sequence $c_1, c_3, c_5,\dots$. Moreover, the number of exact cycles of length $d$ in $H$ is $-\frac{1}{2}c_d,$ where $d=oegr(H)$.
		
		\item[(ii)]The exact girth $egr(H)$ is equal to the index of the first non-zero coefficient from the sequence ${\gamma_1}, {\gamma_2},{\gamma_3}\dots$, where for $i=1,2,\dots,k,$ 
		 \begin{align*}\label{c_i}
		{\gamma_i}=\begin{cases}
		c_i, & \text{if}~ i~ \text{is odd};\\
		c_i-(-1)^{i/2}\omega_{i}, & \text{if}~ i~ \text{is even},
		\end{cases}
		\end{align*}
		and $\omega_{i}$ is the number of elementary $i$-subhypergraphs $(H',\Phi, \mathcal{E}(H'))_i$ of $H$.
		Also, the number of exact cycles of length $t$ in $H$ is $-\frac{1}{ 2}{\gamma_t}$, where $t=egr(H)$. 
	\end{itemize}
\end{thm}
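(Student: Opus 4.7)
The plan is to apply the coefficient formula $c_i=\sum(-1)^{c(H')+e(H')}2^{c(H')}$ (summed over elementary $i$-subhypergraphs of $H$) from the preceding theorem, and carefully track which elementary $i$-subhypergraphs can possibly appear when $i$ is small relative to the (odd) exact girth. Throughout, the key size identity is
\[
\sum_{C\in\mathcal{C}(H')}l(C)+2\,e(H')=i.
\]

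For part~(i), I would first observe that if $i$ is odd, then $\sum_{C\in\mathcal{C}(H')}l(C)$ must be odd, so $\mathcal{C}(H')$ contains at least one odd exact cycle; in particular, $l(C)\ge oegr(H)$ for that cycle. Consequently, whenever $i<oegr(H)$ is odd, there exist no elementary $i$-subhypergraphs at all, and the coefficient formula forces $c_i=0$. For $i=d:=oegr(H)$, the same size identity combined with $l(C)\ge d$ and the parity constraint forces $|\mathcal{C}(H')|=1$, the single cycle has length exactly $d$, and $\mathcal{E}(H')=\Phi$; moreover two distinct elementary $d$-subhypergraphs of this form arise precisely from distinct odd exact cycles of length $d$. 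Each contributes $(-1)^{1+0}2^{1}=-2$, yielding $c_d=-2\cdot\#\{\text{exact cycles of length }d\}$, which gives the stated count.

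For part~(ii), I would next introduce the correction $\gamma_i$ and re-examine the sum. Split the elementary $i$-subhypergraphs into those with $\mathcal{C}(H')=\Phi$ (counted by $\omega_i$) and those with $\mathcal{C}(H')\ne\Phi$. The former occur only for even $i$ (since $2e(H')=i$), and each contributes $(-1)^{0+i/2}2^{0}=(-1)^{i/2}$; hence subtracting $(-1)^{i/2}\omega_i$ (for even $i$) exactly removes their contribution. Writing $\gamma_i$ as above, we therefore have, for every $i$,
\[
\gamma_i=\sum_{(H',\mathcal{C}(H'),\mathcal{E}(H'))_i:\ \mathcal{C}(H')\ne\Phi}(-1)^{c(H')+e(H')}2^{c(H')}.
\]
For $i<t:=egr(H)$, any cycle in $\mathcal{C}(H')$ would have length $\ge t>i$, which is impossible; so this restricted sum is empty and $\gamma_i=0$. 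For $i=t$, the size identity with every cycle having length $\ge t$ forces $|\mathcal{C}(H')|=1$ with a single exact cycle of length $t$ and $\mathcal{E}(H')=\Phi$; such subhypergraphs are in bijection with exact cycles of length $t$, each contributing $-2$. This gives $\gamma_t=-2\cdot\#\{\text{exact cycles of length }t\}$, proving both claims of part~(ii).

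The only genuine subtlety, and what I would check most carefully, is the bijection between elementary $d$-subhypergraphs (respectively $t$-subhypergraphs) at the critical index and exact cycles of that length: one must verify that two distinct exact cycles of length $d$ really give two distinct elementary subhypergraphs $(H',\mathcal{C}(H'),\mathcal{E}(H'))_d$, and that no other elementary subhypergraph slips in through an unexpected configuration (e.g.\ two cycles whose lengths sum to the critical index, or a mixture of a short cycle and edges). Both exclusions follow from the minimality of $oegr(H)$ and $egr(H)$ together with the parity/size constraints above, so the argument closes cleanly without needing any new structural lemma beyond Lemma~\ref{det lem} and the preceding coefficient theorem.
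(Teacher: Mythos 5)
Your proof is correct, but it takes a genuinely different route from the paper. The paper disposes of both parts in two lines: it notes that exact cycles of length $\ell$ in $H$ correspond bijectively to cycles of length $\ell$ in the associated graph $G_H$, and then, since $\mathbf{U}(H)=A(G_H)$, simply cites the known graph-spectral results (Theorems~3.2.5 and 3.2.6 of Cvetkovi\'{c}--Rowlinson--Simi\'{c}) on reading off the girth and odd girth from the coefficients of the characteristic polynomial. You instead argue directly from the paper's own coefficient formula $c_i=\sum(-1)^{c(H')+e(H')}2^{c(H')}$ over elementary $i$-subhypergraphs, using the size identity $\sum_{C}l(C)+2e(H')=i$ together with parity and minimality of the (odd) exact girth to show that no contributing configuration exists below the critical index and that at the critical index only single cycles of that exact length survive, each contributing $-2$. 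Your treatment of part~(ii) is particularly clean: it makes transparent that $\omega_i$ counts exactly the cycle-free elementary $i$-subhypergraphs, each contributing $(-1)^{i/2}$, so that $\gamma_i$ is precisely the cycle-containing part of the sum --- something the paper's citation-based proof leaves implicit. What your approach buys is a self-contained argument that also reproves the underlying graph results; what the paper's buys is brevity. The one caveat you correctly flag --- that distinct exact cycles of length $d$ yield distinct elementary $d$-subhypergraphs and that nothing else sneaks in --- is handled by Lemma~6.1's bijection with elementary subgraphs of $G_H$, exactly as you say, so there is no gap.
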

	\begin{proof}
		It is clear that each exact cycle of length $k$ in $H$ corresponds to a cycle of length $k$ in $G_H$ and vice versa. Since $\mathbf{U}(H)=A(G_H)$, the result follows from~\cite[Theorems~3.2.5 and 3.2.6]{cvetko}.
	\end{proof}


\section{Unified spectrum of unified cycles and unified paths }\label{sec7}

In this section, we determine the unified spectrum of unified cycles and unified paths.

An unified cycle $EC$ and an unified path $EP$ of length $n$ whose parts are of cardinality one are respectively the cycle graph and the path graph of length $n$. Since $\mathbf{U}(EC)=A(EC)$ and $\mathbf{U}(EP)=A(EP)$, the unified eigenvalues of $EC$ are $2~ \cos(\frac{2\pi k}{n})$ for $k=1,2,\dots,n$ and the unified eigenvalues of $EP$ are $2~ \cos(\frac{\pi k}{n+2})$ for $k=1,2,\dots,n+1$ (c.f.~\cite[Theorems~3.7 and 3.6]{bapat}).

So, in the following results, we determine the unified spectrum of unified cycles and unified paths which are not graphs.
\begin{thm}\label{Cn thm}
	Let $EC:S_0,S_1,S_2,\dots,S_{n-1},S_n(=S_0)$ be an unified cycle of length $n\geq 3$ with $|S_i|>1$ for some $i\in \{1,2,\dots,n\}$. Then the unified eigenvalues of $EC$ are
\begin{itemize}
		\item[(i)] $2\cos(\frac{2\pi k}{n})$ with multiplicity $1$ for $k=1,2,\dots,n$;
		\item[(ii)]$\pm1$ with multiplicity $t_1$, provided $|S_{i-1}|$ and $|S_i|>1$ for at least one $i\in \{1,2,\dots n\}$, 	where
		 \begin{equation}\label{t1}
		t_1=\underset{i=1}{\overset{n}{\Sigma}}p_i
		\end{equation} and for $i=1,2,\dots,n$,
		\begin{equation}\label{del}
		{p_i}=\begin{cases}
		0, & \text{if}~|S_{i-1}|~\text{or}~|S_i|=1,\\
		\frac{1}{2}\underset{1\leq l_2<|S_i|}{\underset{1\leq l_1<|S_{i-1}|;}{\sum}}\binom{|S_{i-1}|}{l_1}\binom{|S_i|}{l_2}, & \text{otherwise};
		\end{cases}
		\end{equation} 
		\item[(iii)] 		
		$0,~\pm\sqrt{2}$ with multiplicity $t_2$, provided $\underset{e\in E(EC)}{\sum}|\tau(e)|\neq n+t_1$, where $$t_2=\frac{1}{2}\left[\left(\underset{e\in E(EC)}{\sum}|\tau(e)|\right)-n-t_1\right].$$
	\end{itemize}
\end{thm}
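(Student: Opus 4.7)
The plan is to use the identity $\mathbf{U}(EC)=A(G_{EC})$ and show that $G_{EC}$ decomposes as the vertex-disjoint union of the cycle $C_n$ on $\{S_0,\dots,S_{n-1}\}$, of $t_2$ copies of $P_3$, and of $t_1$ copies of $K_2$. Since the adjacency spectra of $C_n$, $P_3$, and $K_2$ are well known to be $\{2\cos(2\pi k/n):k=1,\dots,n\}$, $\{\sqrt{2},0,-\sqrt{2}\}$, and $\{1,-1\}$ respectively, the three parts of the theorem follow immediately by concatenating multisets, so the real content is producing the decomposition.

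To do so, I would fix an edge $e_i=S_{i-1}\cup S_i$ and classify each $2$-partition $\{T,e_i\setminus T\}$ of $e_i$ as one of three types: (I)~\textit{canonical}, the partition $\{S_{i-1},S_i\}$; (II)~\textit{refined}, where one part is a proper non-empty subset of $S_{i-1}$ or of $S_i$; (III)~\textit{mixed}, where both parts meet each of $S_{i-1}$ and $S_i$ properly and non-trivially. The $n$ canonical partitions produce exactly the edges of the cycle $S_0-S_1-\cdots-S_{n-1}-S_0$ in $G_{EC}$. For a mixed $T\subseteq e_i$, pairwise disjointness of $S_0,\dots,S_{n-1}$ forces any edge $e_k$ containing $T$ to contain both $S_{i-1}$ and $S_i$, hence $k=i$; so $T$ has the unique neighbour $e_i\setminus T$ in $G_{EC}$, and the mixed partitions produce $\sum_i p_i=t_1$ isolated $K_2$ components.

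For a refined set $W\subsetneq S_j$, since $W\subseteq S_j\subseteq e_j\cap e_{j+1}$ and the parts of $EC$ are disjoint, $W$ appears as a part in exactly the two partitions $\{W,\,S_{j-1}\cup(S_j\setminus W)\}$ of $e_j$ and $\{W,\,(S_j\setminus W)\cup S_{j+1}\}$ of $e_{j+1}$, so $W$ has degree exactly $2$ in $G_{EC}$. A parallel containment argument, using disjointness and $S_j\setminus W\neq\emptyset$, shows that its two neighbours $U_W:=S_{j-1}\cup(S_j\setminus W)$ and $V_W:=(S_j\setminus W)\cup S_{j+1}$ each lie inside only one edge of $EC$ and hence are leaves, so $U_W-W-V_W$ is a $P_3$ component. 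As $(j,W)$ varies these $P_3$'s are pairwise disjoint (each endpoint encodes $(j,W)$ via its unique full part $S_{j-1}$ or $S_{j+1}$), and they are disjoint from the cycle and from the $K_2$'s; they number $\sum_j(2^{|S_j|}-2)$, which equals $t_2$ via the identity $\sum_i|\tau(e_i)|=n+2\sum_j(2^{|S_j|}-2)+t_1$ obtained from the type-classification of 2-partitions. I expect the main obstacle to be this final disjointness bookkeeping: showing, via pairwise disjointness of the $S_k$'s, that no element of $I(EC)$ is shared between a cycle vertex $S_k$, a $P_3$ centre $W$, a leaf $U_W$ or $V_W$, and a mixed set $T$.
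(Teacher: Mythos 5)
Your proposal is correct and takes essentially the same route as the paper: both identify $\mathbf{U}(EC)$ with $A(G_{EC})$ and decompose $G_{EC}$ into the cycle on $S_0,\dots,S_{n-1}$, $t_1$ copies of $K_2$ (your ``mixed'' partitions, the paper's Subcase~2b) and $t_2$ copies of $P_3$ (your ``refined'' partitions, the paper's Case~1), then concatenate the known adjacency spectra of $C_n$, $K_2$ and $P_3$. Your classification by $2$-partition type and the explicit count $t_2=\sum_j\bigl(2^{|S_j|}-2\bigr)$ checked against $\sum_{e}|\tau(e)|=n+t_1+2\sum_j\bigl(2^{|S_j|}-2\bigr)$ are only an organizational refinement of the paper's vertex-by-vertex case analysis and edge count.
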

\begin{proof}
	For each $i=1,2,\dots, n$, let $e_i$ be the edge in $EC$ having $\{S_{i-1}, S_i\}$ as its two partition.
	Let $G_{EC}$ be the associated graph of the unified cycle $EC$. Corresponding to $EC$, there exists a unique cycle $C$ in $G_{EC}$ whose vertices are $S_0,S_1,S_2,\dots S_{n-1},S_0$. 
	We shall show that $G_{EC}$ is the disjoint union of $C$, some copies of $K_2$, and some copies of $P_3$.

Since $|S_i|>1$ for some $i\in \{1,2,\dots,n\}$, it can be easily seen that there exists a vertex in in $G_{EC}$ which does not belong to the cycle $C$.
Now, we consider such a vertex $T$ in $G_{EC}$.  Let $T\subset e_i(=S_{i-1}\cup S_{i})$ for some $i\in\{1,2,\dots,n\}$. Then we have the following two cases.
 
 \textbf{Case~1.}  $T\subseteq S_i$.
 
 Since $T$ does not belong to the cycle $C$, $T\neq S_i$. Therefore, $T\subset S_i$ and so $|S_i|>1$. 
 
 Notice that the vertices $T$ and $e_{i}\backslash T$ form an edge in $G_{EC}$. Similarly, $T$ and $e_{i+1}\backslash T$ form an edge in $G_{EC}$. As there is no edge $e~(\neq e_i,e_{i+1})$ in $EC$ such that $T\subset e$, it turns out that no vertex in $G_{EC}$ is adjacent to $T$ other than $e_{i}\backslash T$ and $e_{i+1}\backslash T$.		
 As there is no edge $e~(\neq e_i, e_{i+1})$ in $EC$ such that $e_i\backslash T\subset e$ and $e_{i+1}\backslash T\subset e$, it follows that $e_i\backslash T$ and $e_{i+1}\backslash T$ are adjacent only with $T$. 	
 Therefore, the vertices $T$, $e_{i}\backslash T$ and $e_{i+1}\backslash T$ forms a $P_3$ as a component in $G_{EC}$ with middle vertex $T$.

\textbf{Case~2.} $T\nsubseteq S_{i-1}$ and $S_{i}$.

\textbf{Subcase~2a.} Suppose $e_i\backslash T\subseteq S_{i-1}$ or $S_{i}$.

Since $T\neq S_{i-1}$ or $S_i$, we have $e_i\backslash T\subset S_{i-1}$ or $S_{i}$.
 Therefore, by Case~1, $e_i\backslash T$, $T$, $e_{i+1}\backslash(e_i\backslash T)$ forms $P_3$ component in $G_{EC}$. Thus, $T$ belongs to a $P_3$ component in $G_{EC}$.
 
	\textbf{Subcase~2b.}
Suppose $e_i\backslash T\nsubseteq S_{i-1}$ and $S_{i}$.

 Notice that there is an edge joining the vertices $T$, $e_i\backslash T$ in $G_{EC}$. As $T\nsubseteq S_{i-1}$ and $T\nsubseteq S_{i}$, there is no edge $e~(\neq e_i)$ in $EC$ such that $T\subset e$. This implies that no vertex in $G_{EC}$ is adjacent to $T$ except $e_i\backslash T$. In a similar way, as $e_i\backslash T \nsubseteq S_{i-1}$ and $e_i\backslash T \nsubseteq S_{i}$, there is no edge $e~(\neq e_i)$ in $EC$ such that $e_i\backslash T\subset e$. This implies that no vertex in $G_{EC}$ is adjacent to $e_i\backslash T$ except $T$. Thus, $T$ and $e_i\backslash T$ forms a component $K_2$ in $G_{EC}$.

 Suppose $|S_{i-1}|$ or $|S_i|=1$. Without loss of generality, we assume that $|S_i|=1$. Since $T\nsubseteq S_{i-1}$ and $\neq S_{i}$, we have $S_i\subset T$. It turns out that $e_i\backslash T\subset S_{i-1}$, a contradiction to Subcase~b. So, $|S_{i-1}|$ and $|S_i|>1$. Therefore, from the above two subcases, it is clear that if $|S_{i-1}|$ or $|S_i|=1$, then no $T\subset e_i$ belongs to a $K_2$ component in $G_{EC}$.
 
 Since each $e_i$ has $2p_i$ number of such $T$s, there exists $p_i$ disjoint copies of $K_2$ in $G_{EC}$, where $p_i$ is given in $\eqref{del}$. Totally, we have $t_1=\underset{i=1}{\overset{n}{\Sigma}}p_i$ disjoint copies of $K_2$ in $G_{EC}$, provided $|S_{i-1}|$ and $|S_i|>1$ for at least one $i\in \{1,2,\dots n\}$.
	
From the above two cases, any $S\in V(G_{EC})$ which does not belong to $C$, belongs to either a copy of $K_2$ or a copy of $P_3$, i.e., $G_{EC}$ is the disjoint union of $C$, some copies of $K_2$ and some copies of $P_3$.

 As there are $t_1$ copies of $K_2$ and one copy of $C$, there are $\underset{e\in E(EC)}{\sum}|\tau(e)|-n-t_1$ edges remaining in $G_{EC}$ only if $\underset{e\in E(EC)}{\sum}|\tau(e)|\neq n+t_1$.
So there are $$\frac{1}{2}\left[\left(\underset{e\in E(EC)}{\sum}|\tau(e)|\right)-n-t_1\right]$$ copies of $P_3$ in $G_{EC}$, provided $\underset{e\in E(EC)}{\sum}|\tau(e)|\neq n+t_1$.
	
The eigenvalues of $K_2~(=P_2)$, $P_3$ and $C_n$ are mentioned above.
	 
Thus the proof follows from the fact that the spectrum of the adjacency matrix of the disjoint union of $r$ graphs is the union of the spectra of their adjacency matrices. 
\end{proof}
\begin{cor}
	Let $EC: S_0,S_1,S_2,\dots,S_{n-1},S_0$ be an $2r$-uniform unified cycle of length $n \geq 3$ with $|S_i|=r$~$(>1)$ for all $i=0,1,\dots,n-1$. Then the unified eigenvalues of $EC$ are \begin{itemize}
		\item[(i)] $2~ \cos(\frac{2\pi k}{n})$ with multiplicity $1$ for $k=1,2,\dots,n$;
		\item[(ii)]$\pm1$ with multiplicity $n\delta_1$, where 
		\begin{equation}\label{del1}
		{\delta_1}=
	\frac{1}{2}\underset{1\leq l_1,l_2<r}{\sum}\binom{r}{l_1}\binom{r}{l_2}.
		\end{equation} 
		\item[(iii)] 		
		$0,~\pm\sqrt{2}$ with multiplicity $\delta_2$, where
		\begin{equation}\label{del2}
	{\delta_2}=
		\frac{n}{2}\left[\binom{2r}{1}+\binom{2r}{2}+\dots+\binom{2r}{r-1}+\frac{1}{2}\binom{2r}{r}-(\delta_1+1)\right].
		\end{equation}
	\end{itemize}
\end{cor}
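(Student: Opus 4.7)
The plan is to derive the corollary as a direct specialization of Theorem~\ref{Cn thm} to the case where the unified cycle is $2r$-uniform with every part $S_i$ of cardinality exactly $r > 1$. Under this hypothesis each edge $e_i = S_{i-1} \cup S_i$ has size $2r$, and every $p_i$ appearing in Theorem~\ref{Cn thm} collapses to a single common value, which I will identify with $\delta_1$. Part~(i) of the corollary is then immediate from part~(i) of Theorem~\ref{Cn thm}.

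For part~(ii), I substitute $|S_{i-1}| = |S_i| = r$ into the definition of $p_i$ in~\eqref{del}. The double sum reduces to $\tfrac{1}{2}\sum_{1 \leq l_1, l_2 < r}\binom{r}{l_1}\binom{r}{l_2}$, which is exactly $\delta_1$ from~\eqref{del1}. Summing over $i = 1, \dots, n$ gives $t_1 = n\delta_1$, and the side hypothesis of part~(ii) of Theorem~\ref{Cn thm} (that some consecutive pair of parts both have size exceeding one) holds trivially since $r > 1$.

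For part~(iii), the essential step is computing $\sum_{e \in E(EC)} |\tau(e)|$. Since each edge has size $2r$, the number of unordered $2$-partitions of $e$ into nonempty parts is obtained by selecting a proper nonempty subset up to complementation; grouping such subsets by cardinality yields
\[
|\tau(e)| \;=\; \binom{2r}{1} + \binom{2r}{2} + \dots + \binom{2r}{r-1} + \tfrac{1}{2}\binom{2r}{r}.
\]
Multiplying by $n$ (since there are $n$ edges of the same size) and substituting into the formula $t_2 = \tfrac{1}{2}\bigl[\sum_e |\tau(e)| - n - t_1\bigr]$ produces exactly $\delta_2$ from~\eqref{del2}. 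To invoke part~(iii) of Theorem~\ref{Cn thm} cleanly I would also verify the side condition $\sum_e|\tau(e)| \neq n + t_1$: using $\sum_{l=1}^{r-1}\binom{r}{l} = 2^r - 2$, a short computation gives $\delta_1 = (2^r-2)^2/2$, and comparing with $\binom{2r}{1} + \cdots + \binom{2r}{r-1} + \tfrac{1}{2}\binom{2r}{r} = 2^{2r-1} - 1$ shows $\delta_2 > 0$ whenever $r > 1$.

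The main obstacle is purely bookkeeping---confirming the binomial identity for $|\tau(e)|$ and checking that the hypotheses of Theorem~\ref{Cn thm} are in force. No new conceptual ingredient is needed beyond the parent theorem; the content of the corollary is that under the symmetry assumption $|S_i| = r$ for all $i$, the combinatorial quantities $p_i$ and $|\tau(e_i)|$ take the same value at every index, so the total multiplicities factor neatly as $n\delta_1$ and $\delta_2$.
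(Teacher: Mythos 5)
Your proposal is correct and follows essentially the same route as the paper: specialize Theorem~\ref{Cn thm} by noting $p_i=\delta_1$ for every $i$ (so $t_1=n\delta_1$) and that each edge of cardinality $2r$ has $|\tau(e)|=\binom{2r}{1}+\cdots+\binom{2r}{r-1}+\tfrac12\binom{2r}{r}$, then substitute into the formula for $t_2$. Your explicit verification of the side condition $\sum_e|\tau(e)|\neq n+t_1$ (via $\delta_2=n(2^r-2)>0$ for $r>1$) is a small but welcome addition the paper omits.
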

\begin{proof}
	Since $|S_i|=r$~($>1)$, each edge in $EC$ is of cardinality $2r$, so we have 
	\begin{equation}\label{tau1}
		\underset{e\in E(EC)}{\sum}|\tau(e)|=
			n\left[\binom{2r}{1}+\binom{2r}{2}+\dots+\binom{2r}{r-1}+\frac{1}{2}\binom{2r}{r}\right].
	\end{equation}  
Since $|S_{i-1}|$ and $|S_i|>1$, we have $p_i=\delta_1$ for each $i=1,\dots,n$, where $\delta_1$ is given in~\eqref{del1}. Now, by substituting  $p_i=\delta_1$ for $i=1,2,\dots,n$ and \eqref{tau1} in Theorem~\ref{Cn thm}, we get the result. 
\end{proof}
\begin{thm}\label{Pn thm}
	Let $EP:S_0,S_1,S_2,\dots,S_{n-1},S_n$ be an unified path of length $n \geq 2$ with $|S_i|>1$ for some $i\in \{0,1,\dots,n\}$. Then the unified eigenvalues of $EP$ are \begin{itemize}
		\item[(i)]$2\cos(\frac{\pi k}{n+2})$  with multiplicity $1$ for $k=1,2,\dots,n+1$;
		\item[(ii)]$\pm1$ with multiplicity
		$$t=\begin{cases}
		t_1, &\text{if}~|S_0|=|S_n|=1;\\
		t_1+2^{|S_n|}-2, &\text{if}~|S_0|=1~\text{and}~|S_n|>1;\\
		t_1+2^{|S_0|}-2, &\text{if}~|S_n|=1~\text{and}~ |S_0|>1;\\
		t_1+2^{|S_0|}+2^{|S_n|}-4, &\text{if}~|S_0|>1~\text{and}~|S_n|>1,
		\end{cases}$$	
		
		 provided $|S_{i-1}|$ and $|S_i|>1$ for at least one $i\in \{1,2,\dots n\}$, where $t_1$ is given in~\eqref{t1};
		\item[(iii)] $0$ and $\pm\sqrt{2}$ with multiplicity \begin{equation}\label{t'}
		t'=\frac{1}{2}\left[\left(\underset{e\in E(EP)}{\sum}|\tau(e)|\right)-n-t\right],
		\end{equation}
	provided $\underset{e\in E(EP)}{\sum}|\tau(e)|\neq n+t$.
	\end{itemize}
\end{thm}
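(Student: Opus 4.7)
The plan is to mimic the argument of Theorem~\ref{Cn thm}, replacing the unified cycle by the unified path and tracking the new contributions coming from the two endpoints $S_0$ and $S_n$. Specifically, I will show that the associated graph $G_{EP}$ decomposes as a disjoint union of the path $P_{n+1}$ on the vertices $S_0,S_1,\ldots,S_n$, together with some copies of $K_2$ and some copies of $P_3$; the stated spectrum then follows immediately by additivity of spectra over disjoint unions and the known eigenvalues of $P_{n+1}$, $K_2$, and $P_3$ recalled at the beginning of this section.

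To establish the decomposition I will classify each vertex $T\in I(EP)\setminus\{S_0,\ldots,S_n\}$. Because the $S_i$'s are pairwise disjoint in a unified path, each such $T$ is a part of a unique edge $e_i=S_{i-1}\cup S_i$, except when $T$ is contained in an interior shared part $S_j$ with $1\le j\le n-1$, in which case it is a part of both $e_j$ and $e_{j+1}$. Three cases then arise. If $T\subsetneq S_j$ for some interior $j$, the argument of Case~1 in the proof of Theorem~\ref{Cn thm} carries over verbatim and $\{T,\,e_j\setminus T,\,e_{j+1}\setminus T\}$ induces a $P_3$-component with middle vertex $T$. If $T\subset e_i$ straddles $e_i$ (neither $T$ nor $e_i\setminus T$ lies entirely in $S_{i-1}$ or $S_i$), the argument of Subcase~2b shows $\{T,\,e_i\setminus T\}$ is an isolated $K_2$-component. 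The genuinely new case is the endpoint one: if $T\subsetneq S_0$ with $|S_0|>1$, then since $S_0$ is not shared with any other edge, both $T$ and $e_1\setminus T\supsetneq S_1$ are parts of $e_1$ only, so $\{T,\,e_1\setminus T\}$ is a $K_2$-component. These account for $2^{|S_0|}-2$ additional $K_2$-components, and symmetrically $2^{|S_n|}-2$ arise at the other end.

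Adding these counts yields the value of $t$ in the theorem; the four subcases in the formula for $t$ reflect precisely whether $|S_0|$ and $|S_n|$ equal $1$ or exceed $1$. The quantity $t'$ then drops out of a single edge count: $G_{EP}$ has $\sum_{e\in E(EP)}|\tau(e)|$ edges in total, of which $n$ lie in the path component $P_{n+1}$ and $t$ lie in the $K_2$-components, leaving $\sum_{e\in E(EP)}|\tau(e)|-n-t$ edges to be distributed among the $P_3$-components (each contributing two edges), so the number of $P_3$-components is exactly $t'$ as in~\eqref{t'}. The step I expect to require the most care is verifying that the endpoint contributions at $S_0$ and $S_n$ are genuinely disjoint from the straddling contributions already counted by $p_1$ and $p_n$: one must check that no proper nonempty subset of $S_0$ (respectively $S_n$) is accidentally captured by the formula~\eqref{del}, and that the case split on $(|S_0|,|S_n|)$ reproduces exactly the piecewise formula for $t$. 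Beyond this bookkeeping the argument is routine once the decomposition of $G_{EP}$ is in place.
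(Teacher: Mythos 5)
Your proposal is correct and follows essentially the same route as the paper's proof: both decompose $G_{EP}$ into the path $P$ on $S_0,\dots,S_n$ plus $K_2$- and $P_3$-components by reusing Cases~1 and~2 of Theorem~\ref{Cn thm}, isolate the endpoint contributions $2^{|S_0|}-2$ and $2^{|S_n|}-2$ as extra $K_2$'s (since $S_0$ and $S_n$ are parts of only one edge each), and obtain $t'$ from the same edge count. No substantive differences to report.
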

\begin{proof}
		For each $i=1,2,\dots, n$, let $e_i$ be the edge in $EC$ having $\{S_{i-1}, S_i\}$ as its two partition.
	Let $G_{EP}$ be the associated graph of the unified path $EP$. Corresponding to $EP$, there is a unique path $P$ in $G_{EP}$ whose vertices are $S_0,S_1,S_2,\dots S_{n-1},S_n$.
	
	Now, similar to the Case~2 given in the proof of Theorem~\ref{Cn thm}, firstly we have $t_1$ copies of $K_2$ in $G_{EP}$, where $t_1$ is given in~\eqref{t1}, provided $|S_{i-1}|$ and $|S_i|>1$ for at least one $i\in \{1,2,\dots n\}$. 
	
	Notice that $S_0$ and $S_n$ do not form an edge in $EP$. Therefore,
	\begin{itemize}
		\item if $|S_0|=|S_n|=1$, then there are only $t_1$ copies of $K_2$ in $G_{EP}$;
		\item if $|S_0|>1$ and $|S_n|>1$, then for each proper non-empty subset $T$ of $S_0$ (resp. $S_n$), $T$ and $e_1\backslash T$ (resp. $T$ and $e_n\backslash T$) are adjacent in $G_{EP}$, and so we get a copy of $K_2$ in $G_{EP}$.
		Thus, we get $2^{|S_0|}+2^{|S_n|}-4$ copies of $K_2$, in addition to the $t_1$ copies of $K_2$ in $G_{EP}$; 
		\item if $|S_0|=1$ and $|S_n|>1$, then we have $2^{|S_n|}-2$ copies of $K_2$, in addition to the $t_1$ copies of $K_2$ in $G_{EP}$;
    	\item if $|S_n|=1$ and $|S_0|>1$, then we have $2^{|S_0|}-2$ copies of $K_2$, in addition to the $t_1$ copies of $K_2$ in $G_{EP}$.
	\end{itemize} 
	On the other hand, from Case~1 given in the proof of Theorem~\ref{Cn thm}, for any proper non-empty subset $T$ of $S_i$ ($i\neq 0,n$), we have $t'$ copies of $P_3$ in $G_{EP}$, where $t'$ is given in~$\eqref{t'}$, provided $\underset{e\in E(EP)}{\sum}|\tau(e)|\neq n+t$.
	
	Thus, any $S\in V(G_{EP})$ which does not belong to the path $P$, belongs to either a copy of $K_2$ or a copy of $P_3$, i.e., $G_{EP}$ is the disjoint union of $P$, $t$ copies of $K_2$ and $t'$ copies of $P_3$. 
	
	The rest of the proof follows similarly to that of Theorem~\ref{Cn thm}.
\end{proof}
\begin{cor}
	Let $EP: S_0,S_1,S_2,\ldots,S_{n-1},S_n$ be an $2r$-uniform unified path of length $n \geq 2$ with $|S_i|=r$~$(>1)$ for all $i=0,1,\ldots,n$. Then its unified eigenvalues are
	 \begin{itemize}
		\item[(i)] $2~ \cos(\frac{\pi k}{n+2})$  with multiplicity $1$ for $k=1,2,\dots,n+1$;
		\item[(ii)]$\pm1$ with multiplicity $\delta$, where
			$\delta=n\delta_1+2^{r+1}-4$, and $\delta_1$ is given in~\eqref{del1};
		\item[(iii)] 	 $0,~\pm\sqrt{2}$ with multiplicity 
			$${\delta^*}=
		\frac{1}{2}\left[n\left\{ \binom{2r}{1}+\binom{2r}{2}+\dots+\binom{2r}{r-1}+\frac{1}{2}\binom{2r}{r}\right\}-(\delta+n)\right].$$
	\end{itemize}
\end{cor}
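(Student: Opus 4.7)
The plan is to derive this corollary as a direct specialization of Theorem~\ref{Pn thm}, since the only extra hypotheses here are uniformity (every edge has cardinality $2r$) and the lower bound $|S_i|=r>1$ at every part. First I would verify part~(i): the distinct-part eigenvalues contributed by the underlying path component are automatic from Theorem~\ref{Pn thm}(i), since the path $P$ in $G_{EP}$ has $n+1$ vertices regardless of $r$, yielding $2\cos\!\bigl(\tfrac{\pi k}{n+2}\bigr)$ with multiplicity one for $k=1,\ldots,n+1$.

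For part~(ii), I would invoke the formula for $t_1$ from~\eqref{t1}. Because $|S_{i-1}|=|S_i|=r>1$ for every $i$, each summand $p_i$ in~\eqref{del} equals the constant $\delta_1$ given in~\eqref{del1}, so $t_1=n\delta_1$. Since we are in the case $|S_0|,|S_n|>1$ of Theorem~\ref{Pn thm}(ii), the total multiplicity of $\pm 1$ equals
\begin{equation*}
 t = t_1 + 2^{|S_0|}+2^{|S_n|}-4 = n\delta_1 + 2\cdot 2^{r}-4 = n\delta_1 + 2^{r+1}-4 = \delta,
\end{equation*}
where the contribution $2^{r}-2$ from each end accounts for the proper non-empty subsets of $S_0$ and of $S_n$ that spawn additional $K_2$ components.

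For part~(iii), I would compute $\sum_{e\in E(EP)}|\tau(e)|$. Each edge $e_i$ has $|e_i|=2r$, and the number of $2$-partitions of a set of size $2r$ is $\binom{2r}{1}+\binom{2r}{2}+\cdots+\binom{2r}{r-1}+\tfrac{1}{2}\binom{2r}{r}$ (halving the central binomial to avoid counting each partition twice). Since there are exactly $n$ edges,
\begin{equation*}
 \underset{e\in E(EP)}{\sum}|\tau(e)| = n\left[\binom{2r}{1}+\binom{2r}{2}+\cdots+\binom{2r}{r-1}+\tfrac{1}{2}\binom{2r}{r}\right].
\end{equation*}
Plugging this together with $t=\delta$ into~\eqref{t'} yields the claimed value of $\delta^{*}$, and Theorem~\ref{Pn thm}(iii) then supplies the eigenvalues $0$ and $\pm\sqrt{2}$ with that multiplicity.

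There is no serious obstacle here; the only points that require small care are (a) remembering that the endpoint parts $S_0$ and $S_n$ each contribute $2^r-2$ additional $K_2$'s rather than falling under the internal count $t_1$, and (b) correctly halving $\binom{2r}{r}$ when enumerating $2$-partitions of a $2r$-set. Once these are handled, the corollary follows by direct substitution into Theorem~\ref{Pn thm}.
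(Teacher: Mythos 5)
Your proposal is correct and follows essentially the same route as the paper: specialize Theorem~\ref{Pn thm} by noting $p_i=\delta_1$ for every $i$ (so $t_1=n\delta_1$), apply the endpoint case $|S_0|,|S_n|>1$ to get $t=n\delta_1+2^{r+1}-4$, and substitute $\sum_{e}|\tau(e)|=n\bigl[\binom{2r}{1}+\cdots+\binom{2r}{r-1}+\tfrac{1}{2}\binom{2r}{r}\bigr]$ into~\eqref{t'}. The only difference is that you spell out the endpoint $K_2$ count and the halving of $\binom{2r}{r}$ explicitly, which the paper leaves implicit.
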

\begin{proof}
	Since $|S_i|=r$~($>1)$, each edge in $EC$ is of cardinality $2r$, so we have  \begin{equation}\label{tau}
	\underset{e\in E(EP)}{\sum}|\tau(e)|=
	n\left[\binom{2r}{1}+\binom{2r}{2}+\dots+\binom{2r}{r-1}+\frac{1}{2}\binom{2r}{r}\right].
	\end{equation} 
Since $|S_{i-1}|$ and $|S_i|>1$, we have $p_i=\delta_1$ for each $i=1,\dots,n$, where $\delta_1$ is given in~\eqref{del1}. Now, by substituting $t_1=n\delta_1$ and \eqref{tau} in Theorem~\ref{Pn thm}, we get the result.
\end{proof}


\section{Unified characteristic polynomial of the hypergraphs constructed by some hypergraph operations}\label{sec8}

In this section, we determine the unified characteristic polynomial of a hypergraph constructed by some hypergraph operations.

	Let $H_i$ be a hypergraph on $n_i$ vertices for $i=1,2,\dots,k$. Then it is straight forward to see that the unified characteristic polynomial of the disjoint union of $H_is$ is
	\[P_{\mathbf{U}({\overset{.}{\bigcup}H_i})}(x)=\overset{k}{\underset{i=1}{\prod}}P_{\mathbf{U}(H_i)}(x).\]

\begin{defn}\normalfont
	 A vertex $v$ in a hypergraph $H$ is said to be a \textit{$g$-vertex} if $v\notin e$ for any $e\in E(H)$ with $|e|>2$.
\end{defn}
\begin{thm}
	Let $H$ be a simple hypergraph.
	Let $u$ be a vertex in $H$, and let $H_{uv}$ denote the hypergraph obtained from $H$ by adding a new vertex $v$ and joining it with $u$. Then the characteristic polynomial of $\mathbf{U}(H_{uv})$ is $$P_{\mathbf{U}(H_{uv})}(x)=xP_{\mathbf{U}(H)}(x)-P_{\mathbf{U}(H)(\{u\}|\{u\})}(x).$$
	Furthermore, if $u$ is a $g$-vertex, then
	$$P_{\mathbf{U}(H_{uv})}(x)=x P_{\mathbf{U}(H)}(x)-P_{\mathbf{U}(H\backslash\{u\})}(x).$$
\end{thm}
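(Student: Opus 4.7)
My plan is to express $\mathbf{U}(H_{uv})$ as a bordered matrix built from $\mathbf{U}(H)$ and then compute the characteristic polynomial by a single Schur-complement calculation.

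\emph{Setting up the bordered form.} The only new edge in $H_{uv}$ is $\{u,v\}$, whose unique $2$-partition is $\{\{u\},\{v\}\}$. Hence $I(H_{uv}) = I(H)\cup \{\{v\}\}$, so the $e$-index increases by exactly one. Ordering the rows and columns so that $\{v\}$ comes last, I would read off from the definition of the unified matrix that
\[
\mathbf{U}(H_{uv}) = \begin{pmatrix}\mathbf{U}(H) & \mathbf{e}_u \\ \mathbf{e}_u^{T} & 0\end{pmatrix},
\]
where $\mathbf{e}_u$ is the standard basis vector with a $1$ in the coordinate indexed by $\{u\}$ and zeros elsewhere. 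The $(\{v\},\{v\})$ diagonal entry is $0$ because $H_{uv}$ is simple, and the only nonzero entry off the diagonal of the new row/column sits at the $\{u\}$-coordinate with value $1=m(\{u,v\})$.

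\emph{First identity.} Writing $A = xI_k - \mathbf{U}(H)$ and applying the Schur-complement identity
\[
\det\begin{pmatrix} A & -\mathbf{e}_u \\ -\mathbf{e}_u^{T} & x\end{pmatrix} = x\det(A) - \mathbf{e}_u^{T}\operatorname{adj}(A)\mathbf{e}_u,
\]
(valid as a polynomial identity in $\mathbb{C}[x]$ and obtained from the usual Schur formula together with $A^{-1}=\operatorname{adj}(A)/\det(A)$), the left side equals $P_{\mathbf{U}(H_{uv})}(x)$, the first term on the right equals $xP_{\mathbf{U}(H)}(x)$, and the second equals the $(\{u\},\{u\})$-cofactor of $A$, namely $\det\!\bigl(xI_{k-1}-\mathbf{U}(H)(\{u\}|\{u\})\bigr) = P_{\mathbf{U}(H)(\{u\}|\{u\})}(x)$. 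This gives the first formula. An equivalent direct route is to expand $\det(xI_{k+1}-\mathbf{U}(H_{uv}))$ along the last column (only two nonzero entries) and then along the newly-last row (only one nonzero entry), tracking signs.

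\emph{Second identity.} When $u$ is a $g$-vertex, no part of $I(H)$ of size $\geq 2$ can contain $u$: such a part would necessarily arise from a $2$-partition of an edge of size $\geq 3$ containing $u$, contradicting the $g$-vertex hypothesis. Therefore the only element of $I(H)$ involving $u$ is the singleton $\{u\}$, and consequently $I(H\backslash\{u\}) = I(H)\backslash\{\{u\}\}$. For any $S,S'\in I(H\backslash\{u\})$ neither contains $u$, so the edge $S\cup S'$ (if it is an edge of $H$) avoids $u$ and is preserved in $H\backslash\{u\}$; hence the entries of $\mathbf{U}(H\backslash\{u\})$ and $\mathbf{U}(H)(\{u\}|\{u\})$ coincide, and their characteristic polynomials are equal. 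Substituting this equality into the first identity yields the second. The only delicate step I foresee is the sign bookkeeping in the direct cofactor expansion, which is bypassed cleanly by the Schur-complement route.
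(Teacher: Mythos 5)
Your proposal is correct, and its essential content matches the paper's proof: both arguments rest on the same structural observation, namely that the only new element of $I(H_{uv})$ is the singleton $\{v\}$, whose row in $\mathbf{U}(H_{uv})$ is the standard basis vector supported at $\{u\}$, so that $\mathbf{U}(H_{uv})$ is the pendant-vertex bordering of $\mathbf{U}(H)$. Where you differ is in how the resulting determinant is evaluated: the paper identifies the associated graph $G_{H_{uv}}$ with the graph $(G_H)_{\{u\}\{v\}}$ obtained by attaching a pendant edge at the vertex $\{u\}$ of $G_H$ and then cites the classical formula $P_{A(G_j)}(x)=xP_{A(G)}(x)-P_{A(G\backslash\{v_j\})}(x)$ from Cvetkovi\'c--Rowlinson--Simi\'c, whereas you reprove that formula from scratch via the Schur complement $x\det(A)-\mathbf{e}_u^{T}\operatorname{adj}(A)\mathbf{e}_u$. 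Your route is self-contained and avoids any appeal to the associated-graph dictionary, at the cost of redoing a standard computation; the paper's route is shorter but leans on the external reference and on the (easy but unstated in your write-up) fact that no other entry of the top-left block changes when the edge $\{u,v\}$ is added --- a point you do verify implicitly by noting that $\{v\}\notin I(H)$. Your treatment of the $g$-vertex case, showing $I(H\backslash\{u\})=I(H)\backslash\{\{u\}\}$ and that the corresponding entries agree so that $\mathbf{U}(H\backslash\{u\})=\mathbf{U}(H)(\{u\}|\{u\})$, is exactly the paper's argument and is if anything spelled out more carefully than in the paper.
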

\begin{proof}
	The associated graph $G_{H_{uv}}$ of $H_{uv}$ is the same as the graph $(G_H)_{\{u\}\{v\}}$ which is obtained from $G_H$ by adding the pendant edge $\{\{u\},\{v\}\}$ at the vertex $\{u\}$. Notice that $A((G_H)_{\{u\}\{v\}})=A(G_{H_{uv}})=\mathbf{U}(H_{uv})$ and $A(G_H)=\mathbf{U}(H)$.
	So, the proof directly follows from~\cite[Theorem~2.2.1]{cvetko}: ``Let $G$ be a simple graph with vertices $v_1$, $v_2$, \dots, $v_n$. Let $G_j$ denote the graph obtained from $G$ by adding a pendant edge at the vertex $v_j$, then $P_{A(G_j)}(x)=xP_{A(G)}(x)-P_{A(G\backslash\{v_j\})}(x)".$ This completes the proof of the first part.
	
	Furthermore, if $u$ is a $g$-vertex in $H$, then the unified matrix of $H\backslash\{u\}$ is $\mathbf{U}(H)(\{u\}|\{u\})$. This completes the proof.
\end{proof}

Let $H_1$ and $H_2$ be two vertex-disjoint hypergraphs, and let $u\in V(H_1)$, $v\in V(H_2)$.
The \textit{coalescence} of $H_1$ and $H_2$ with respect to $u,v$, denoted by $H_1(u)\diamond H_2(v)$, is the hypergraph
obtained from $H_1, H_2$ by identifying $u$ with $v$ \cite{fan2020}.

\begin{thm}
	Let $H_1$ and $H_2$ be vertex-disjoint simple hypergraphs, and let $u\in V(H_1)$, $v\in V(H_2)$. Then, 
	\begin{eqnarray*}
		P_{\mathbf{U}(H_1(u)\diamond H_2(v))}(x)= P_{\mathbf{U}(H_1)}(x) P_{\mathbf{U}(H_2)(\{v\}|\{v\})}(x)+P_{\mathbf{U}(H_1)(\{u\}|\{u\})}(x)P_{\mathbf{U}(H_2)}(x)-\\
		xP_{{\mathbf{U}(H_1)}(\{u\}|\{u\})}(x) P_{\mathbf{U}(H_2)(\{v\}|\{v\})}(x).
	\end{eqnarray*}
	In particular, if  $u$ and $v$ are $g$-\textit{vertices}, then
	\begin{equation*}
	P_{\mathbf{U}(H_1(u)\diamond H_2(v))}(x)=P_{\mathbf{U}(H_1)}(x) P_{\mathbf{U}(H_2\backslash \{v\})}(x)+P_{\mathbf{U}(H_1\backslash\{u\})}(x) P_{\mathbf{U}(H_2)}(x)-xP_{\mathbf{U}(H_1\backslash\{u\})}(x)P_{\mathbf{U}(H_2\backslash\{v\})}(x).
	\end{equation*}
\end{thm}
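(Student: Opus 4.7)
The plan is to mirror the strategy of the preceding theorem, reducing everything to the analogous coalescence identity for simple graphs applied to the associated graph. The crucial structural observation is that $G_{H_1(u)\diamond H_2(v)}$ coincides with the graph coalescence $G_{H_1}(\{u\}) \cdot G_{H_2}(\{v\})$, i.e.\ the graph formed from the disjoint union of $G_{H_1}$ and $G_{H_2}$ by identifying the vertex $\{u\}$ of $G_{H_1}$ with the vertex $\{v\}$ of $G_{H_2}$.

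First I would verify this structural claim. Since $H_1$ and $H_2$ are vertex-disjoint, the only common element of $I(H_1)$ and $I(H_2)$ after identifying $u$ with $v$ is the singleton $\{u\}=\{v\}$; every other element of $I(H_1)$ is a subset of $V(H_1)$ disjoint from $V(H_2)$, and vice versa. Consequently, for any $S,S'\in I(H_1)$ with neither equal to $\{u\}$, the adjacency $S\overset{c}{\sim}S'$ in $H_1(u)\diamond H_2(v)$ is determined solely by edges of $H_1$, since no edge of $H_2$ has a $2$-partition $\{S,S'\}$; the symmetric statement holds in $H_2$. The vertex $\{u\}=\{v\}$ inherits its $G_{H_1}$-neighbours (from edges of $H_1$) and its $G_{H_2}$-neighbours (from edges of $H_2$). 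This yields $G_{H_1(u)\diamond H_2(v)} = G_{H_1}(\{u\})\cdot G_{H_2}(\{v\})$.

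Next, I would invoke the classical coalescence identity for characteristic polynomials of simple graphs (found in Cvetkovi\'c's monograph, in the same vein as Theorem~2.2.1 already cited in the excerpt): for vertex-disjoint simple graphs $G_1,G_2$ with $u'\in V(G_1)$, $v'\in V(G_2)$,
\begin{equation*}
P_{A(G_1(u')\cdot G_2(v'))}(x) = P_{A(G_1)}(x)\,P_{A(G_2\backslash\{v'\})}(x) + P_{A(G_1\backslash\{u'\})}(x)\,P_{A(G_2)}(x) - x\,P_{A(G_1\backslash\{u'\})}(x)\,P_{A(G_2\backslash\{v'\})}(x).
\end{equation*}
Specializing to $G_1 = G_{H_1}$, $G_2 = G_{H_2}$, $u'=\{u\}$, $v'=\{v\}$, and using $A(G_{H_i}) = \mathbf{U}(H_i)$ together with the fact that deleting the vertex $\{u\}$ from $G_{H_i}$ corresponds to removing the row and column indexed by $\{u\}$ from $\mathbf{U}(H_i)$, i.e.\ to the principal submatrix $\mathbf{U}(H_i)(\{u\}|\{u\})$, we obtain the first displayed identity of the theorem.

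For the $g$-vertex case, I would verify that $\mathbf{U}(H_i)(\{u\}|\{u\}) = \mathbf{U}(H_i\backslash\{u\})$ whenever $u$ is a $g$-vertex of the simple hypergraph $H_i$. Indeed, such a $u$ lies only in edges of cardinality at most two, and since $H_i$ has no loops, the only $S\in I(H_i)$ containing $u$ is $S=\{u\}$ itself; deleting $u$ from $H_i$ removes precisely the edges of the form $\{u,w\}$, whose contribution to $\mathbf{U}(H_i)$ is confined to the row and column of $\{u\}$. Hence $I(H_i\backslash\{u\}) = I(H_i)\backslash\{\{u\}\}$ and the entries match, giving the claimed equality. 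Substituting this into the first formula yields the second. The main bookkeeping obstacle is the structural identification in the first step and the reconciliation of $\mathbf{U}(H_i)(\{u\}|\{u\})$ with $\mathbf{U}(H_i\backslash\{u\})$ in the $g$-vertex step; once these are in hand, everything reduces to the known graph identity.
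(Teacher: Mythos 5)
Your proposal is correct and follows the same route as the paper: identify $G_{H_1(u)\diamond H_2(v)}$ with the coalescence $G_{H_1}(\{u\})\diamond G_{H_2}(\{v\})$, apply the classical coalescence formula for characteristic polynomials of simple graphs, and use $\mathbf{U}(H_i)(\{u\}|\{u\})=\mathbf{U}(H_i\backslash\{u\})$ for $g$-vertices. You in fact supply more detail than the paper on the structural identification and on the $g$-vertex reconciliation, both of which the paper treats as evident.
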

\begin{proof}
	Let $G_{H_1}$ and $G_{H_2}$ be the associated graphs of $H_1$ and $H_2$, respectively. It can be easily seen that the associated graph corresponding to the hypergraph $H_1(u)\diamond H_2(v)$ is isomorphic to the graph $G_{H_1}(\{u\})\diamond G_{H_2}(\{v\})$. Notice that  $A(G_{H_1\backslash\{u\}})={\mathbf{U}(H_1)}(\{u\}|\{u\})$ and $A(G_{H_2\backslash\{v\}})={\mathbf{U}(H_2)}(\{v\}|\{v\})$. Thus, the result follows from~\cite[Theorem~2.2.3]{cvetko}: ``Let $G$ and $H$ be simple graphs and let $G(u)\diamond H(v)$ be the coalescence in which the vertex $u$ of $G$ is
	identified with the vertex $v$ of $H$. Then $P_{A(G(u)\diamond H(v))}(x)= P_{A(G)}(x) P_{A(H\backslash\{v\})}(x)+P_{A(G\backslash\{u\})}(x) P_{A(H)}(x)-xP_{A(G\backslash\{u\})}(x) P_{A(H\backslash\{v\})}(x)."$
	
	In particular, if $H_1(u)\diamond H_2(v)$ is obtained by identifying the $g$-vertices, then $\mathbf{U}(H_2)(\{v\}|\{v\})=\mathbf{U}(H_2\backslash \{v\})$ and $\mathbf{U}(H_1)(\{u\}|\{u\})= \mathbf{U}(H_1\backslash \{u\})$. Thus, we get the result as desired.
\end{proof}

\begin{thm}
	Let $H$ be a simple hypergraph. Let $u,v\in V(H)$. Let $H'$ denote the hypergraph obtained from $H$ by adding a set $S$ of finite number of new vertices to $H$, and making the elements of the set $S$ together with the vertices $u$ and $v$ as an edge in $H'$. Then the unified characteristic polynomial of $H'$ is given as follows:
	\begin{itemize}
		\item [(i)] If there is an edge $e\in E(H)$ with $|e|>2$ such that $u,v\in e$, then \begin{align*}
		P_{\mathbf{U}(H')}(x)&=(x^2-1)^{\alpha_1}\left(x^3P_{\mathbf{U}(H)}(x)-x^2P_{\mathbf{U}(H)(\{u\}|\{u\})}(x)-(x^2-x)P_{\mathbf{U}(H)(\{v\}|\{v\})}(x)\right.\nonumber\\& 
		\left.+(x-1)P_{\mathbf{U}(H)(T|T)}(x)-x^2P_{\mathbf{U}(H)(\{u,v\}|\{u,v\})}(x)+xP_{\mathbf{U}(H)(T'|T')}(x)\right),
		\end{align*} where $T=\{\{u\},\{v\}\}$, $T'=\{\{u\},\{u,v\}\}$ and \begin{equation}\label{alpha1}
		\alpha_1=\begin{cases}
		2^{|S|+1}-4, \qquad\text{if}~ |S|>1;\\
		~~~0, \qquad~~\text{otherwise}.
		\end{cases}
		\end{equation}
		\item[(ii)] If there is no edge $e\in E(H)$ with $|e|>2$ such that $u,v\in e$, then \begin{align*}
		P_{\mathbf{U}(H')}(x)=(x^2-1)^{\alpha_2}\left(x^2P_{\mathbf{U}(H)}(x)-xP_{\mathbf{U}(H)(\{u\}|\{u\})}(x)-xP_{\mathbf{U}(H)(\{v\}|\{v\})}(x)+P_{\mathbf{U}(H)(T|T)}\right),
		\end{align*}
	\end{itemize}
	where 
	\begin{align}\label{alpha2}
	\alpha_2=\begin{cases}
	2^{|S|+1}-3, \qquad\text{if}~ |S|>1;\\
	~~~0, \qquad~~\text{otherwise}.
	\end{cases}
	\end{align}
\end{thm}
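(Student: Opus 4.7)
The approach is to pass to the associated graph $G_{H'}$, exhibit it as a disjoint union of $G_H$ augmented by a small number of pendants together with several isolated $K_2$ components, and then invoke the pendant recursion $P_{A(G_j)}(x)=xP_{A(G)}(x)-P_{A(G\setminus v_j)}(x)$ from~\cite[Theorem~2.2.1]{cvetko} to compute the characteristic polynomial.

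Write $e^{\ast}:=S\cup\{u,v\}$ for the new hyperedge. The first step is to enumerate $I(H')\setminus I(H)$: every part of $e^{\ast}$ that meets $S$ is new, since no element of $S$ appears in any edge of $H$, so the only parts of $e^{\ast}$ already belonging to $I(H)$ are the singletons $\{u\}$, $\{v\}$ and -- precisely in case~(i) -- also the pair $\{u,v\}$, which is a part of the existing edge $e\supsetneq\{u,v\}$ with $|e|\ge 3$. The $2^{|S|+1}-1$ two-partitions of $e^{\ast}$ each contribute one new adjacency in $G_{H'}$.

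The second step is the observation that every new vertex $T\in I(H')\setminus I(H)$ coming from $e^{\ast}$ has degree one in $G_{H'}$: since $e^{\ast}$ is the unique edge of $H'$ in which any element of $S$ appears, the only 2-partition of an edge of $H'$ containing $T$ as a part is $\{T,e^{\ast}\setminus T\}$. Hence, in case~(i), three of the two-partitions attach pendants in $G_H$ at $\{u\}$, at $\{v\}$, and at $\{u,v\}$, while the remaining $\alpha_1=2^{|S|+1}-4$ two-partitions produce disjoint $K_2$ components whose two endpoints are both new. In case~(ii), $\{u,v\}$ is itself a new vertex, so it sits in an isolated $K_2$ paired with $S$, and only $\{u\}$ and $\{v\}$ receive pendants; the remaining two-partitions contribute further isolated $K_2$'s, with the boundary values $|S|\le 1$ handled separately to match the stated $\alpha_1,\alpha_2$.

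Since $G_{H'}$ decomposes as a disjoint union, $P_{\mathbf{U}(H')}(x)=(x^2-1)^{\alpha}\cdot P_{\mathbf{U}(H)^{+}}(x)$, where $\mathbf{U}(H)^{+}$ is the adjacency matrix of $G_H$ augmented by the appropriate pendants and $\alpha$ counts the isolated $K_2$ components. The factor $P_{\mathbf{U}(H)^{+}}(x)$ is obtained by applying the cited pendant recursion once per pendant; at each step the resulting principal minor corresponds to deleting the row and column of the pendant's attachment point from the current matrix, and this can be re-expanded in terms of principal minors of the original $\mathbf{U}(H)$. The main technical obstacle is in case~(i): after pendants at $\{u\}$ and $\{v\}$ have been attached, attaching the pendant at $\{u,v\}$ requires expanding the minor of the two-pendant-augmented matrix in terms of minors of $\mathbf{U}(H)$, and then regrouping terms to produce the compound coefficients $(x^2-x)$ and $(x-1)$ appearing in the statement; the key algebraic fact enabling this regrouping is that $\{u\}$, $\{v\}$, $\{u,v\}$ are pairwise non-adjacent in $G_H$ (two overlapping subsets cannot appear together in a 2-partition), so pendant attachment commutes with deleting any of these rows and columns.
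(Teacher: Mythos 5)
Your proposal follows essentially the same route as the paper: decompose $G_{H'}$ as $G_H$ with pendant edges attached at $\{u\}$, $\{v\}$ and, in case~(i), $\{u,v\}$, together with $\alpha_1$ (resp.\ $\alpha_2$) isolated copies of $K_2$, and then apply the pendant-edge recursion of \cite[Theorem~2.2.1]{cvetko} once per pendant; your enumeration of $I(H')\setminus I(H)$, the degree-one observation for the new parts, and the counts of isolated $K_2$ components all match the paper's argument. The only slip is the parenthetical ``key algebraic fact'': $\{u\}$ and $\{v\}$ are disjoint, not overlapping, so they \emph{can} be adjacent in $G_H$ (precisely when $\{u,v\}\in E(H)$); this does not damage the argument, since the pendant recursion and the commutation of deleting a vertex with attaching a pendant at a \emph{different} vertex hold irrespective of adjacency, so the non-adjacency claim is not actually needed where you invoke it.
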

\begin{proof}
	Let $X=S\cup\{u,v\}$. 
		First we shall prove part (i). Suppose that there is an edge $e\in E(H)$ with $|e|>2$ such that $u,v\in e$.
		Then $\{u,v\}\in I(H)$. Notice that $E(H')$ contains the edge $X$ in addition to the edges in $E(H)$. We know that $\mathbf{U}(H')=A(G_{H'})$. The construction of $G_{H'}$ from $G_H$ is given below.

			\textbf{Case a.}  If $|S|>1$, then all the subsets of $X$ excluding $\{u\}$, $\{v\}$, $\{u,v\}$, $\Phi$ and $X$ are the vertices added to $G_H$ in the construction of $G_{H'}$. Let $G_H^{(1)}$ be the graph obtained by adding the pendant edge $\{\{u\}, S\cup \{v\}\}$ at the vertex $\{u\}$ of $G_H$. Let $G_H^{(2)}$ be the graph obtained by adding the pendant edge $\{\{v\},S\cup \{u\}\}$ at the vertex $\{v\}$ of $G_H^{(1)}$. Let $G_H^{(3)}$ be the graph obtained by adding the pendant edge $\{\{u,v\}, S\}$ at the vertex $\{u,v\}$ of $G_H^{(2)}$.
			
			Also, form a copy of $K_2$ in $G_{H'}$ for each pair of proper subsets of $X$ excluding $\{u\}$, $\{v\}$, $\{u,v\}$, $ S\cup \{v\}$,  $S\cup \{u\}$ and $S$. These copies are mutually disjoint. In this way, totally we get $\alpha_1$ number of such copies, where $\alpha_1$ is given in~\eqref{alpha1}.				
			Then $G_{H'}$ is the disjoint union of $G_H^{(3)}$ and these $\alpha_1$ copies of $K_2$. 
			
			\textbf{Case b.}  If $|S|=1$, then there is no non-empty proper subset of $X$ other than $\{u\}$, $\{v\}$, $\{u,v\}$, $ S\cup \{v\}$,  $S\cup \{u\}$ and $S$. So, $G_{H'}$ is the graph $G_H^{(3)}$.
	
		Applying~\cite[Theorem~2.2.1]{cvetko}: ``Let $G$ be a simple graph with vertices $v_1$, $v_2$, \dots, $v_n$. Let $G_j$ denote the graph obtained from $G$ by adding a pendant edge at the vertex $v_j$, then $P_{A(G_j)}(x)=xP_{A(G)}(x)-P_{A(G\backslash\{v_j\})}(x)"$ to the graphs $G_H^{(1)}$, $G_H^{(2)}$ and $G_H^{(3)}$ successively, we have
		\begin{align*}
		P_{A(G_H^{(1)})}(x)&=xP_{A(G_H)}(x)-P_{A(G_H\backslash\{\{u\}\})}(x),\nonumber\\
		P_{A(G_H^{(2)})}(x)&=x^2P_{A(G_H)}(x)-xP_{A(G_H\backslash\{\{u\}\})}(x))-xP_{A(G_H\backslash \{\{v\}\})}(x)+P_{A(G_H\backslash T)}(x),\nonumber\\
		P_{A(G_H^{(3)})}(x)&=x[x(xP_{A(G_H)}(x)-P_{A(G_H\backslash\{\{u\}\})}(x))-xP_{A(G_H\backslash \{\{v\}\})}(x)+P_{A(G_H\backslash T)}(x)]\nonumber\\
		&\quad-x^2P_{A(G_H\backslash\{\{u,v\}\})}(x)+xP_{A(G_H\backslash T')}(x)+xP_{A(G_H\backslash \{\{v\}\})}(x)-P_{A(G_H\backslash T)}(x),
		\end{align*}
		where $T=\{\{u\},\{v\}\}$, and $T'=\{\{u\},\{u,v\}\}$.
		
		Since $A(G_H)=\mathbf{U}(H)$ and $A(G_H\backslash D)=\mathbf{U}(H)(D|D)$ for any $D\subseteq V(G_H)$, the above equations become
		\begin{align}
		P_{A(G_H^{(1)})}(x)&=xP_{\mathbf{U}(H)}(x)-P_{\mathbf{U}(H)(\{u\}|\{u\})}(x),\nonumber\\
		P_{A(G_H^{(2)})}(x)&=x^2P_{\mathbf{U}(H)}(x)-xP_{\mathbf{U}(H)(\{u\}|\{u\})}(x)-xP_{\mathbf{U}(H)(\{v\}|\{v\})}(x)+P_{\mathbf{U}(H)(T|T)},\label{eq}\\
		P_{A(G_H^{(3)})}(x)&=x^3P_{\mathbf{U}(H)}(x)-x^2P_{\mathbf{U}(H)(\{u\}|\{u\})}(x)-(x^2-x)P_{\mathbf{U}(H)(\{v\}|\{v\})}(x)\nonumber\\&  
		\quad+(x-1)P_{\mathbf{U}(H)(T|T)}(x)-x^2P_{\mathbf{U}(H)(\{u,v\}|\{u,v\})}(x)+xP_{\mathbf{U}(H)(T'|T')}(x).\label{eqq}
		\end{align}	
		Notice that 
		\begin{align}\label{EQ}
		P_{A(G_{H'})}(x)=P_{A(G_H^{(3)})}(x) [P_{A(K_2)}(x)]^{\alpha_1}.
		\end{align}
		So, part (i) follows by substituting~\eqref{eqq} and  $P_{A(K_2)}(x)=x^2-1$ in~\eqref{EQ}.
		
Next, we shall prove part (ii). Suppose there is no edge $e\in E(H)$ with $|e|>2$ such that $u,v\in e$. 		
		Then $\{u,v\}\notin I(H)$.
		Notice that $G_{H'}$ is obtained from $G_H$ by adding the pendant edges $\{\{u\}, S\cup \{v\}\}$ and $\{\{v\}, S\cup \{u\}\}$ at $\{u\}$ and $\{v\}$ respectively, followed by adding disjoint union of $\alpha_2$ copies of $K_2$, where $\alpha_2$ is given in~\eqref{alpha2}, i.e., $G_{H'}$ is the disjoint union of $G_H^{(2)}$, and $\alpha_2$ copies of $K_2$. Thus,
		\begin{align}\label{EQQ}
		P_{A(G_{H'})}(x)=P_{A(G_H^{(2)})}(x) [P_{A(K_2)}(x)]^{\alpha_2}.
		\end{align}  So, part (ii) follows by substituting~\eqref{eq} and  $P_{A(K_2)}(x)=x^2-1$ in~\eqref{EQQ}.
\end{proof}


\section{Bounds}\label{sec9}

\subsection{Unified spectral radius and some hypergraph invariants}
We call the largest eigenvalue of the unified matrix of a hypergraph $H$ as the \textit{unified spectral radius of $H$}. In this section, we provide some bounds on the unified spectral radius of $H$.
\begin{thm}
	Let $H$ be a simple hypergraph with $e$-index $k$. Then,	
	\begin{align*}
		\lambda_1(H)^2\leq\frac{k-1}{k}\left(\left(\underset{S\in I(H)}\sum d_H(S)\right)-\partial(H)\right).
	\end{align*}
\end{thm}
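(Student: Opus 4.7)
The plan is to reduce the desired inequality to a standard spectral inequality and then invoke the identity \eqref{L eq2} from the earlier lemma.

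First I would observe that, since $H$ is simple (no loops), every diagonal entry of $\mathbf{U}(H)$ is $0$, so $\mathrm{tr}(\mathbf{U}(H)) = \sum_{i=1}^{k}\lambda_i(H) = 0$. In particular, $\lambda_1(H) = -\sum_{i=2}^{k}\lambda_i(H)$. Squaring and applying the Cauchy--Schwarz inequality to the vector $(1,1,\dots,1)$ and $(\lambda_2(H),\dots,\lambda_k(H))$ in $\mathbb{R}^{k-1}$ gives
\begin{equation*}
\lambda_1(H)^2 \;=\; \Bigl(\sum_{i=2}^{k}\lambda_i(H)\Bigr)^{\!2} \;\leq\; (k-1)\sum_{i=2}^{k}\lambda_i(H)^2.
\end{equation*}

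Next I would rewrite the right-hand side as $(k-1)\bigl(\sum_{i=1}^{k}\lambda_i(H)^2 - \lambda_1(H)^2\bigr)$ and collect the $\lambda_1(H)^2$ terms, yielding $k\,\lambda_1(H)^2 \leq (k-1)\sum_{i=1}^{k}\lambda_i(H)^2$, i.e.
\begin{equation*}
\lambda_1(H)^2 \;\leq\; \frac{k-1}{k}\sum_{i=1}^{k}\lambda_i(H)^2.
\end{equation*}

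Finally I would invoke equation~\eqref{L eq2} from the preceding lemma, which applies because $H$ is simple (hence in particular has no multiple edges), to replace $\sum_{i=1}^{k}\lambda_i(H)^2$ by $\bigl(\sum_{S\in I(H)} d_H(S)\bigr) - \partial(H)$. Substituting this in gives the claimed bound.

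There is no serious obstacle: the argument is essentially the observation that when the trace is zero, Cauchy--Schwarz pins the squared spectral radius in terms of the full sum of squared eigenvalues, and the lemma already translates that sum into the combinatorial quantity on the right. The one point to double-check is that the simplicity hypothesis is strong enough to ensure both that the diagonal of $\mathbf{U}(H)$ vanishes (no loops) and that \eqref{L eq2} applies (no multiple edges); both follow directly from the definition of a simple hypergraph given in Section~\ref{sec2}.
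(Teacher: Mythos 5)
Your proof is correct and follows essentially the same route as the paper: both exploit $\mathrm{tr}(\mathbf{U}(H))=0$ to bound $\lambda_1(H)^2$ by $(k-1)\sum_{i=2}^{k}\lambda_i(H)^2$ via Cauchy--Schwarz, then substitute the sum of squared eigenvalues using \eqref{L eq2}. The only difference is cosmetic (you square $\lambda_1=-\sum_{i\geq 2}\lambda_i$ directly, while the paper passes through $\lambda_1\leq\sum_{i\geq 2}|\lambda_i|$), and your check that simplicity supplies both the zero diagonal and the applicability of \eqref{L eq2} is exactly right.
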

\begin{proof}
	Clearly,
	$\underset{i=1}{\overset{k}{\sum}}\lambda_{i}(H)=0$. Therefore, \begin{equation}\label{eqq1}
		\lambda_1(H)\leq \underset{i=2}{\overset{k}{\sum}}|\lambda_{i}(H)|.
	\end{equation}
	By Cauchy-Schwarz inequality, we have 
	\begin{equation}\label{eqq3}
		\left(\underset{i=2}{\overset{k}{\sum}}|\lambda_{i}(H)|\right)^2\leq \underset{i=2}{\overset{k}{\sum}}|\lambda_{i}(H)|^2\cdot \underset{i=2}{\overset{k}{\sum}}1=(k-1)\underset{i=2}{\overset{k}{\sum}}\lambda_{i}(H)^2.
	\end{equation}
	Applying~\eqref{eqq3} and \eqref{eqq1} in \eqref{L eq2}, we get
	\begin{align*}
		\left(\underset{S\in I(H)}\sum d_H(S)\right)-\partial(H)-\lambda_1(H)^2&=	\underset{i=2}{\overset{k}{\sum}}\lambda_{i}(H)^2\\
	&\geq \frac{1}{k-1}		\left(\underset{i=2}{\overset{k}{\sum}}|\lambda_{i}(H)|\right)^2 \\&\geq\frac{1}{k-1}\cdot \lambda_1(H)^2\nonumber
	\end{align*}
	and so,
	$\frac{k-1}{k}\left(\left(\underset{S\in I(H)}\sum d_H(S)\right)-\partial(H) \right)\geq \lambda_1(H)^2.$
\end{proof}	
\begin{thm}
	Let $H$ be a simple hypergraph. Then, $|\lambda(H)|\leq \Delta(H)$ for any eigenvalue $\lambda(H)$ of $\mathbf{U}(H)$.
\end{thm}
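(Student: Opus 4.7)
The plan is to reduce the bound to the classical statement for graphs applied to the associated graph $G_H$, and then to control $\Delta(G_H)$ by $\Delta(H)$. Since $H$ is a simple hypergraph it is loopless, so $\mathbf{U}(H) = A(G_H)$ and the two matrices share the same spectrum. It therefore suffices to prove two things: (a) the classical inequality $|\lambda| \le \Delta(G_H)$ for any eigenvalue $\lambda$ of $A(G_H)$, and (b) $\Delta(G_H)\le \Delta(H)$.

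For (a) I would invoke (or quickly reprove) the standard argument: take a unit eigenvector $\mathbf{x}=(x_S)_{S\in I(H)}$ of $A(G_H)$ for $\lambda$, choose a coordinate $S_0$ attaining the maximum of $|x_S|$, and use the eigenvalue equation
\[
|\lambda|\,|x_{S_0}| \;=\; \Bigl|\sum_{S'\sim S_0} x_{S'}\Bigr| \;\le\; d_{G_H}(S_0)\,|x_{S_0}| \;\le\; \Delta(G_H)\,|x_{S_0}|,
\]
from which $|\lambda|\le \Delta(G_H)$ follows.

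For (b) I would compute $d_{G_H}(S)$ for each $S\in I(H)$. By the construction of $G_H$, the neighbours of $S$ in $G_H$ are exactly the parts $e\setminus S$ arising from edges $e\in E(H)$ with $\emptyset\subsetneq S\subsetneq e$; since $H$ is simple there are no multiplicities to worry about, so $d_{G_H}(S)=|\{e\in E(H):S\subsetneq e\}|$. If $|S|=1$, say $S=\{v\}$, then loopleness gives $|e|\ge 2$ whenever $v\in e$, so $\{v\}\subsetneq e$ and hence $d_{G_H}(\{v\})=d_H(v)\le \Delta(H)$. If $|S|\ge 2$, pick any $v\in S$; every edge $e$ with $S\subsetneq e$ contains $v$, so $d_{G_H}(S)\le d_H(v)\le \Delta(H)$. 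Combining these two cases yields $\Delta(G_H)\le \Delta(H)$, and combining with (a) gives $|\lambda(H)|\le \Delta(H)$ as required.

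I do not anticipate any real obstacle here; the only point requiring care is the verification that, for parts $S$ of size at least two in $I(H)$, the degree in $G_H$ is still dominated by the maximum vertex degree of $H$. That amounts to the observation above that any edge witnessing $d_{G_H}(S)$ automatically contains every vertex of $S$, so the bound reduces to the ordinary degree bound in $H$.
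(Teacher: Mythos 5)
Your proposal is correct and follows essentially the same route as the paper: both reduce to the associated graph via $\mathbf{U}(H)=A(G_H)$, observe that for any $v\in S$ every edge witnessing $d_{G_H}(S)$ also contains $v$ (so the maximum degree of $G_H$ is attained at a singleton and is at most $\Delta(H)$), and then invoke the classical eigenvalue--maximum-degree bound for graphs. The only cosmetic difference is that you reprove that classical bound where the paper cites it.
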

\begin{proof}		 
	Let $S\in I(H)$. Notice that for any $v\in S$, we have $d_{G_H}(\{v\})\geq d_{G_H}(S)$. Therefore, the maximum degree of $G_H$ occurs at a vertex corresponding to a singleton element of $I(H)$.
	Since $H$ has no loops, $d_H(v)=d_{G_H}(\{v\})$ and so $\Delta(H)=\underset{v\in V(H)}{\max}d_H(v)= \Delta(G_H)$.
	Hence, the proof follows from~\cite[Proposition 1.1.1]{cvetko}: ``If a simple graph $G$ has the maximum degree $\Delta(G)$, then $|\lambda(G)|\leq\Delta(G)$ for any eigenvalue $\lambda(G)$ of $A(G)$".
\end{proof}
In particular, for a simple hypergraph $H$, $\lambda_1(H)\leq \Delta(H)$.
\begin{thm}\label{bound t}
	Let $H$ be a hypergraph with $e$-index $k$. Then, 
	$\lambda_1(H)\geq~\delta^*(H)-\frac{\partial(H)}{k}.$
\end{thm}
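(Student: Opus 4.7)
The natural approach is the Rayleigh quotient bound applied to the all-ones vector $\mathbf{1}\in\mathbb{R}^k$: since $\mathbf{U}(H)$ is symmetric of order $k$,
\[
\lambda_1(H)\;\ge\;\frac{\mathbf{1}^{\top}\mathbf{U}(H)\mathbf{1}}{\mathbf{1}^{\top}\mathbf{1}}\;=\;\frac{1}{k}\sum_{S\in I(H)}\mathrm{rowsum}(S),
\]
so the whole task reduces to computing $\sum_{S}\mathrm{rowsum}(S)$ and comparing it with $\sum_{S}d_H(S)$. The target inequality will then come out of the trivial observation $\sum_{S\in I(H)} d_H(S)\ge k\,\delta^*(H)$.

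The first step is to identify $\mathrm{rowsum}(S)$ edge by edge. For a singleton $S=\{v\}$, Lemma 4.1(i) already gives $\mathrm{rowsum}(\{v\})=d_H(v)=d_H(\{v\})$, so there is no correction term. For a part $S\in I(H)$ with $|S|\ge 2$, the diagonal entry $\mathbf{U}_{SS}$ is $0$ (the diagonal is nonzero only at singleton loops), and the off-diagonal entries contribute exactly $m(e)$ at the column $e\setminus S$ for every edge $e\in E(H)$ with $S\subsetneq e$. Thus
\[
\mathrm{rowsum}(S)\;=\;\sum_{\substack{e\in E(H)\\S\subsetneq e}}m(e)\;=\;d_H(S)\;-\;m(S)\cdot\mathbf{1}[\,S\in E^{*}(H)\,],
\]
since $d_H(S)$ counts $e=S$ itself whenever $S$ is an edge, while the row sum does not.

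The second step is to sum this identity over $S\in I(H)$. The correction term $m(S)\,\mathbf{1}[\,S\in E^{*}(H),\ |S|\ge 2\,]$ is nonzero precisely for those $S\in I(H)$ that are themselves edges of cardinality at least $2$; by the definition of $I(H)$, such an $S$ must also be a proper part of some other edge, so $S$ is an included edge. Conversely every included edge of cardinality $\ge 2$ lies in $I(H)$. Hence
\[
\sum_{S\in I(H)}\mathrm{rowsum}(S)\;=\;\Bigl(\sum_{S\in I(H)} d_H(S)\Bigr)\;-\;\partial(H).
\]

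Combining this with the Rayleigh bound and $\sum_{S\in I(H)}d_H(S)\ge k\,\delta^*(H)$ finishes the proof:
\[
\lambda_1(H)\;\ge\;\frac{1}{k}\Bigl(\sum_{S\in I(H)} d_H(S)-\partial(H)\Bigr)\;\ge\;\delta^{*}(H)-\frac{\partial(H)}{k}.
\]
The only delicate point is the bookkeeping in the second step: one has to be careful to exclude the diagonal singleton-loop contribution (where no correction is needed since $d_H$ already agrees with the row sum) and to recognize that the parts which are themselves edges are exactly the included edges of cardinality $\ge 2$, so that the total correction aggregates precisely to $\partial(H)$.
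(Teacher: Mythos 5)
Your proof is correct and follows essentially the same route as the paper: the Rayleigh quotient bound with the all-ones vector, combined with the identity $J_{k\times 1}^{\top}\mathbf{U}(H)J_{k\times 1}=\bigl(\sum_{S\in I(H)}d_H(S)\bigr)-\partial(H)$ (which the paper obtains from its Lemma 4.1(iii) computation rather than by the per-row bookkeeping you give, but the content is the same) and the trivial bound $\sum_{S}d_H(S)\geq k\,\delta^*(H)$.
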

\begin{proof}
	Let $x$ be a column vector in $\mathbb{R}^k$ with the usual Euclidean norm. We consider the extremal representation,
	\begin{align*}
		\lambda_1(H)=\underset{||x||=1}{\max}\bigl\{x^T\mathbf{U}(H)x\bigr\}=\underset{||x||\neq0}{\max}\biggl\{\frac{x^T\mathbf{U}(H)x}{x^Tx}\biggr\}.
	\end{align*} 
	Thus,\begin{align*}
		\lambda_1(H)&\geq \frac{J^T_{k\times 1} \mathbf{U}(H) J_{k\times 1}}{J_{k\times 1}^T J_{k\times 1}}\\
		&=\frac{1}{k}\left[\left(\underset{S\in I(H)}{\sum} d_H(S)\right)-\partial(H)\right]\\
		&\geq \delta^*(H)-\frac{\partial(H)}{k}.
	\end{align*}
	This completes the proof.	
\end{proof}

\begin{thm}
	Let $H$ be a hypergraph with rank $m$ and $e$-index $k$. Let $G$ be a simple spanning subgraph of $H$. Then, $$(3-m)\lambda_1(H)+(m-2)\lambda_k(H)\leq \lambda_1(G).$$
\end{thm}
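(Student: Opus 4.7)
The plan is to apply Weyl's inequality after decomposing $\mathbf{U}(H)$ into the part that matches the adjacency matrix of $G$ and a residual that captures the genuinely hypergraph-theoretic content. Let $\widetilde{A}(G)$ denote the $k\times k$ symmetric matrix obtained by placing $A(G)$ in the principal block of $\mathbf{U}(H)$ indexed by the singletons $\{\{v\}:v\in V(H)\}\subseteq I(H)$ and zero-padding elsewhere. Since $G$ is a simple graph, its $e$-index equals $|V(G)|$ and $\widetilde{A}(G)$ has the same non-zero spectrum as $A(G)$; in particular $\lambda_1(\widetilde{A}(G))=\lambda_1(G)$. Setting $X=\mathbf{U}(H)-\widetilde{A}(G)$, the symmetric matrix $X$ collects the 2-edges of $H$ absent from $G$, the matching-type entries arising from the 2-partitions of hyperedges of size at least three, and any loop contributions of $H$.

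Weyl's inequality for sums of symmetric matrices then gives
\[
\lambda_1(H)\;=\;\lambda_1\bigl(\widetilde{A}(G)+X\bigr)\;\le\;\lambda_1(\widetilde{A}(G))+\lambda_1(X)\;=\;\lambda_1(G)+\lambda_1(X),
\]
so $\lambda_1(G)\ge\lambda_1(H)-\lambda_1(X)$. A short rearrangement reduces the theorem to the single spectral estimate
\[
\lambda_1(X)\;\le\;(m-2)\bigl(\lambda_1(H)-\lambda_k(H)\bigr),
\]
because substituting back yields $\lambda_1(G)\ge \lambda_1(H)-(m-2)(\lambda_1(H)-\lambda_k(H))=(3-m)\lambda_1(H)+(m-2)\lambda_k(H)$.

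For the estimate on $\lambda_1(X)$, I would stratify $X$ by hyperedge size. Each hyperedge $e$ of size $s$ contributes a matching matrix $M_e$ on the $2^{s}-2$ non-empty proper subsets of $e$, with spectrum contained in $\{-1,0,+1\}$; the rank bound $s\le m$ then caps the number of stratification levels at $m-2$ above the $s=2$ baseline already absorbed into $\widetilde{A}(G)$. The spread $\lambda_1(H)-\lambda_k(H)$ of $\mathbf{U}(H)$ itself provides the natural unit of spectral variation per level, and a Courant--Fischer comparison between $\mathbf{U}(H)$ and its singleton-indexed submatrix converts the rank-stratification bound into the claimed factor of $(m-2)$.

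The main obstacle is making this last estimate rigorous: the matching matrices $M_e$ from different hyperedges may share subset-indices in $I(H)$, so one cannot simply add their individual spectral radii. A careful argument using test vectors constructed jointly from the $\lambda_1(H)$- and $\lambda_k(H)$-eigenvectors of $\mathbf{U}(H)$ — together with the structural fact that each non-singleton index in $I(H)$ is a proper subset of a hyperedge of size at most $m$ — appears to be the right vehicle, with the spread factor arising from the Courant--Fischer max-min characterisation applied across the stratification.
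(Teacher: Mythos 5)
Your opening reduction is sound as far as it goes: writing $\mathbf{U}(H)=\widetilde{A}(G)+X$ and applying Weyl's inequality correctly reduces the theorem to the single estimate $\lambda_1(X)\leq (m-2)\bigl(\lambda_1(H)-\lambda_k(H)\bigr)$. But that estimate is exactly where the proof has to happen, and you do not prove it --- you name it as ``the main obstacle'' and offer only a heuristic. The sketch does not survive scrutiny. Summing the matching matrices $M_e$ gives $\lambda_1(X)\leq\sum_e\lambda_1(M_e)$, which is the \emph{number} of hyperedges, not $(m-2)$ times the spread; the observation that there are only $m-2$ size classes above the graph level bounds the number of strata but says nothing about the spectral radius of the sum of all (overlapping) matchings \emph{within} one stratum, which for, say, all $3$-subsets of an $n$-set grows like $n^{3/2}$. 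The appeal to Courant--Fischer/interlacing also does not apply: $X$ is not a principal submatrix of $\mathbf{U}(H)$ (it is $\mathbf{U}(H)$ with certain entries zeroed out), so interlacing gives no control over $\lambda_1(X)$ in terms of $\lambda_1(H)$ and $\lambda_k(H)$. As written, the proposal is a reduction plus an unverified conjecture, not a proof.

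For comparison, the paper gets the factor $m-2$ from a partition of the \emph{index set} rather than a decomposition of the \emph{matrix}: it splits $I(H)$ into the $m-1$ cardinality classes $\mathcal{S}_1,\dots,\mathcal{S}_{m-1}$, applies the block-partition inequality $\lambda_1(A)+(t-1)\lambda_{\min}(A)\leq\sum_{i=1}^{t}\lambda_1(A_{ii})$ (Cvetkovi\'c--Rowlinson--Simi\'c, Corollary~1.3.17) with $t=m-1$, and then uses interlacing legitimately --- each diagonal block $M_{ii}$ \emph{is} a principal submatrix, so $\lambda_1(M_{ii})\leq\lambda_1(H)$ for $i\geq 2$, while $M_{11}=\mathbf{U}(G)$. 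Your instinct to stratify by cardinality is the right one, but it must be fed into a block-partition eigenvalue inequality of this type; if you want to salvage your route, you would need to prove your bound on $\lambda_1(X)$, and the most plausible way to do that is essentially to rederive the paper's block inequality, at which point the Weyl decomposition becomes superfluous.
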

\begin{proof}
	For each $r=1,2,\dots, m-1$, there exists $S\in I(H)$ of cardinality $r$, since $rank(H)=m$. Let $\mathbf{U}(H)$ be partitioned into $(m-1)^2$ blocks $M_{ij}:=\mathbf{U}(H)[\mathcal{S}_i|\mathcal{S}_j]$ for $i,j=1,2,\dots,m-1$, where $\mathcal{S}_t=\{S\in I(H)~|~|S|=t\}$ for $t=1,2,\dots,m-1$. Then by~\cite[Corollary~1.3.17]{cvetko}, we have
	\begin{equation}\label{ineq}
		\lambda_1(H)+(m-2)\lambda_k(H)\leq \underset{i=1}{\overset{m-1}{\sum}}\lambda_1(M_{ii}).
	\end{equation} 
	By Interlacing Theorem for symmetric matrices~(c.f.~\cite[Theorem~1.3.11]{cvetko}), we have $\lambda_1(\mathbf{U}(H))\geq \lambda_1(M_{ii})$ for all $i=1,2,\dots m-1$. So,~\eqref{ineq} becomes,
	\[\lambda_1(H)+(m-2)\lambda_k(H)\leq \lambda_1(M_{11})+(m-2)\lambda_1(H).\]
	Notice that $M_{11}=\mathbf{U}(G)$ and so $\lambda_1(M_{11})=\lambda_1(G)$, we get the result as desired. 
\end{proof}
\begin{thm}\label{interlace}
	Let $H$ be a hypergraph on $n$ vertices with $e$-index $k_1$ and let $H'$ be an induced subhypergraph of $H$ with $e$-index $k_2$. 
	Then,
	$$\lambda_{k_1-k_2+i}(H)\leq \lambda_i(H')\leq\lambda_i(H)~\text{for}~i=1,2,\dots,k_2.$$
\end{thm}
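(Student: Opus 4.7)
The plan is to recognize this as a direct consequence of the Cauchy Interlacing Theorem for real symmetric matrices once $\mathbf{U}(H')$ is identified as a principal submatrix of $\mathbf{U}(H)$. Thus the proof reduces to a structural verification followed by a citation to \cite[Theorem~1.3.11]{cvetko}.

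First I would establish the set-theoretic inclusion $I(H')\subseteq I(H)$. Since $H'$ is an induced subhypergraph of $H$, every edge of $H'$ is an edge of $H$ (specifically, the edges of $H$ lying inside $V(H')$), so every $2$-partition of an edge of $H'$ is a $2$-partition of the same edge viewed in $H$; together with the inclusion of singletons $\{v\}$ for $v\in V(H')\subseteq V(H)$, this yields the claim. Relabeling so that the rows and columns of $\mathbf{U}(H)$ indexed by $I(H')$ come first does not affect eigenvalues.

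Next I would check entry by entry that $\mathbf{U}(H')=\mathbf{U}(H)[I(H')\mid I(H')]$. The off-diagonal case is the crucial one: for $S_i,S_j\in I(H')$, the relation $S_i\overset{c}{\sim}S_j$ asserts the existence of an edge $e$ with $\{S_i,S_j\}\in\tau(e)$ and multiplicity $c$; in $H$ this edge is $e=S_i\cup S_j\subseteq V(H')$, and by the induced hypothesis $e\in E(H')$ with the same multiplicity, so both $\mathbf{U}(H')_{S_iS_j}$ and $\mathbf{U}(H)_{S_iS_j}$ equal $c$, and conversely. The diagonal case is immediate, since a singleton $\{v\}$ with $v\in V(H')$ is a loop of $H'$ iff it is a loop of $H$, with matching multiplicity.

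With this identification, $\mathbf{U}(H')$ is a $k_2\times k_2$ principal submatrix of the symmetric matrix $\mathbf{U}(H)$ of order $k_1$, and the Cauchy Interlacing Theorem \cite[Theorem~1.3.11]{cvetko} gives exactly $\lambda_{k_1-k_2+i}(H)\leq \lambda_i(H')\leq \lambda_i(H)$ for $i=1,2,\dots,k_2$. The one place requiring care — and really the only obstacle — is the entry-matching step, where the induced hypothesis is essential: without it, there could be an edge $e\in E(H)\setminus E(H')$ with both parts in $I(H')$, which would introduce a nonzero entry in $\mathbf{U}(H)$ that is absent from $\mathbf{U}(H')$ and break the submatrix identification.
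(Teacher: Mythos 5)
Your proposal is correct and follows exactly the paper's argument: establish $I(H')\subseteq I(H)$, identify $\mathbf{U}(H')$ as the principal submatrix $\mathbf{U}(H)[I(H')\mid I(H')]$ using the induced hypothesis, and invoke the Cauchy Interlacing Theorem. The paper states this in two sentences; your entry-by-entry verification simply makes explicit the details the paper leaves to the reader.
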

\begin{proof}
	Since $I(H')\subseteq I(H)$, and any $(S, S')$-th entry in $\mathbf{U}(H')$ is the same as that of $\mathbf{U}(H)$, it follows that the unified matrix of the induced subhypergraph $H'$ is a principal submatrix of $\mathbf{U}(H)$. Thus, the proof directly follows from the Interlacing Theorem for symmetric matrices. 
\end{proof}


\subsection{Chromatic numbers and unified eigenvalues of a hypergraph}
	The following definitions can be found in \cite{voloshin}.
	A \emph{proper coloring} or \emph{weak coloring} of a loopless hypergraph $H$ is an assignment of colors to the vertices of $H$ in such a way that at least two vertices in every edge receive different colors. A proper coloring of a hypergraph $H$ using at most $\lambda$ colors is called a \emph{proper $\lambda$-coloring}. The \emph{chromatic number of $H$}, denoted by $\chi(H)$, is the minimum number of colors required for a proper coloring of $H$.
	
	A \emph{strong coloring} or \emph{strong vertex coloring} of a hypergraph $H$ is a coloring of the vertices of $H$ in such a way that every edge $e\in E(H)$ has all vertices colored differently. The \emph{strong chromatic number} of $H$, denoted by $\chi_s(H)$, is the minimum number of colors required for a strong coloring of $H$.

As a strong vertex coloring of a hypergraph $H$ is always a weak coloring, we have $\chi(H)\leq \chi_s(H)$.
\begin{thm}
	Let $H$ be a loopless hypergraph having no included edges. If for every induced subhypergraph $H'$ of $H$, $\delta^*(H')=d_{H'}(v)$ for some $v\in V(H')$, then $\chi(H)\leq 1+\lambda_1(H).$
\end{thm}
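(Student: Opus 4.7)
My strategy is to adapt Wilf's classical bound $\chi(G) \leq 1 + \lambda_1(G)$ for simple graphs to the unified setting, by induction on $|V(H)|$. The base case $|V(H)| = 1$ is immediate since then $H$ has no edges (as $H$ is loopless), so $\chi(H) = 1$ and $\lambda_1(H) = 0$.

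Before the inductive step, I would check that both hypotheses descend to induced subhypergraphs. The condition on $\delta^*$ is quantified over all induced subhypergraphs of $H$, and any induced subhypergraph of an induced subhypergraph of $H$ is itself one, so this persists. The ``no included edges'' property is also inherited, because any inclusion $e_1 \subsetneq e_2$ among edges of an induced subhypergraph would already occur in $H$.

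For the inductive step, observe that $\partial(H) = 0$ (since $H$ has no included edges), so Theorem~\ref{bound t} yields $\lambda_1(H) \geq \delta^*(H)$. Applying the hypothesis to $H$ itself, pick $v \in V(H)$ with $d_H(v) = \delta^*(H) \leq \lambda_1(H)$, and set $H' := H \setminus \{v\}$; by the previous paragraph $H'$ also satisfies the hypotheses. The induction hypothesis gives $\chi(H') \leq 1 + \lambda_1(H')$, and Theorem~\ref{interlace} gives $\lambda_1(H') \leq \lambda_1(H)$, so $\chi(H') \leq 1 + \lambda_1(H)$. Fix a weak $\chi(H')$-coloring of $H'$ and extend it to $v$: for each edge $e$ containing $v$, the extension fails to be a weak coloring of $H$ only if all vertices of $e \setminus \{v\}$ share a single color $c_e$, in which case $v$ must avoid $c_e$. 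Hence at most $d_H(v)$ colors are forbidden for $v$, so we can extend either by reusing the existing palette or by adding one new color, giving
\[
\chi(H) \;\leq\; \max\bigl(\chi(H'),\, d_H(v) + 1\bigr) \;\leq\; 1 + \lambda_1(H).
\]

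The main technical point to watch is the coloring-extension step. Weak colorings are strictly more permissive than strong colorings, so it would be too much to require $v$ to differ from every vertex in $\bigcup_{e \ni v}(e \setminus \{v\})$; instead $v$ merely has to differ from the single ``forced'' color on each of the at most $d_H(v)$ edges whose remaining vertices are monochromatic. This is exactly why $d_H(v)$---rather than some larger unified degree $d_H(S)$ with $|S| \geq 2$---is the correct quantity to pit against $\lambda_1(H)$, and why the hypothesis that $\delta^*(H')$ be attained at a singleton $\{v\}$ is precisely what is needed to close the induction via Theorem~\ref{bound t}.
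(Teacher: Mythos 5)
Your proof is correct and rests on exactly the same ingredients as the paper's: the observation that a weak coloring of $H\setminus\{v\}$ extends to $v$ with at most $d_H(v)$ forbidden colors, Theorem~\ref{bound t} with $\partial(H)=0$ to get $\delta^*(H)\leq\lambda_1(H)$, and the interlacing bound of Theorem~\ref{interlace}. The only difference is packaging --- the paper passes to a color-critical induced subhypergraph and bounds its minimum degree below by $\chi(H)-1$, while you run an induction on $|V(H)|$ with a greedy extension; these are equivalent formulations of the same degeneracy argument.
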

\begin{proof}
	If $\chi(H)=1$, then $H$ is totally disconnected, and so the result is true.
	
	Suppose that $\chi(H)=r\geq2$. Let $H'$ be an induced subhypergraph of $H$ with the  property that $\chi(H')=r$ and $\chi(H'\backslash \{u\})<r$ for any $u\in V(H')$.

	First, we shall show that $\delta(H')\geq r-1$. Assume, for contradiction, that this is not the case. Let $v$ be a vertex of $H'$ such that $d_{H'}(v)<r-1$. Since $\chi(H'\backslash \{v\})<r$ and $H'$ has no included edges, we may extend the coloring of $H'\backslash \{v\}$ to a proper $(r-1)$-coloring of $H'$, a contradiction. So, 
	\begin{equation}\label{del eq}
		\delta(H')\geq r-1.
	\end{equation}
	By assumption,
	\begin{align}\label{eq 3}
		\delta(H')&\leq d_{H'}(v)=\delta^*(H').	
	\end{align}
	Now, by Theorems~\ref{bound t} and~\ref{interlace}, we have
	\begin{align}\label{eq }
		\delta^*(H')\leq\lambda_1(H')\leq \lambda_1(H). 
	\end{align}
	Thus, the result follows from~\eqref{del eq}, \eqref{eq 3} and \eqref{eq }.
\end{proof}

\begin{defn}\normalfont
	A subhypergraph of $H$ which itself is a graph is called a \textit{subgraph of $H$}. A subhypergraph $H'$ of $H$ is called an \textit{induced subgraph of $H$} if all the edges of cardinality two in $E(H)$ which are completely contained in $V(H')$ forms $E(H')$. A subhypergraph $H'$ of $H$ is called a \textit{spanning subgraph of $H$} if $V(H')=V(H)$.
\end{defn}

\begin{thm}
	Let $H$ be a simple hypergraph on $n$ vertices with $e$-index $k$, and having at least one edge of cardinality two. Let $G$ be a spanning induced subgraph of $H$. Then,
	\[\chi_s(H)\geq\chi(H)\geq \chi(G)\geq1+\displaystyle\frac{\lambda_1(G)}{|\lambda_n(G)|}\geq1+\displaystyle\frac{\lambda_1(G)}{|\lambda_{k}(H)|}.\]
\end{thm}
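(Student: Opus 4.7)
The plan is to establish the chain of four inequalities from left to right, each of which follows quickly from either a fact already noted in the paper or a classical spectral graph theory result.

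The first inequality $\chi_s(H)\geq\chi(H)$ is simply the observation already recorded in the preliminaries: every strong coloring is a weak coloring, because requiring that all vertices of an edge get distinct colors certainly forces at least two of them to differ. For the second inequality $\chi(H)\geq\chi(G)$, I would take any proper (weak) coloring $c$ of $H$ with $\chi(H)$ colors and restrict it to $V(G)=V(H)$. An edge of $G$ is, by the definition of spanning induced subgraph of $H$, exactly a $2$-element edge of $H$; for such an edge, the weak coloring condition (at least two vertices in the edge receive different colors) coincides with the graph coloring condition (the two endpoints get different colors). Hence $c$ is also a proper coloring of $G$, giving $\chi(G)\leq\chi(H)$.

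The third inequality $\chi(G)\geq 1+\lambda_1(G)/|\lambda_n(G)|$ is the classical Hoffman bound for the chromatic number of a simple graph (see, e.g., \cite[Theorem~3.6.2]{cvetko} or the analogous bound in \cite{bapat}); since $H$ has at least one edge of cardinality two, $G$ has an edge, so $\lambda_n(G)<0$ and the quotient is well-defined. I would simply invoke it, applied to the graph $G$, using $\mathbf{U}(G)=A(G)$.

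The fourth and last inequality is the one requiring a small spectral argument, and it is where I would focus the write-up. It is equivalent to $|\lambda_n(G)|\leq|\lambda_k(H)|$, which (since both eigenvalues are non-positive because the corresponding matrices have zero trace and a nonzero entry) amounts to $\lambda_k(H)\leq\lambda_n(G)$. For this, observe that $\mathbf{U}(G)$ is precisely the principal submatrix of $\mathbf{U}(H)$ indexed by the singleton elements $\{v\}\in I(H)$: for $u\neq v$, the $(\{u\},\{v\})$-entry of $\mathbf{U}(H)$ equals $1$ iff $\{\{u\},\{v\}\}$ is a $2$-partition of some edge of $H$, which happens iff $\{u,v\}\in E(H)$, iff $\{u,v\}\in E(G)$. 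Applying the Cauchy interlacing theorem for symmetric matrices (as in the proof of Theorem~\ref{interlace}) to this submatrix of order $n$ inside $\mathbf{U}(H)$ of order $k$ gives $\lambda_k(H)\leq\lambda_n(G)\leq 0$, hence $|\lambda_n(G)|\leq|\lambda_k(H)|$, and dividing $\lambda_1(G)>0$ by the larger denominator yields the claimed inequality. The main subtlety is just to notice that $G$ is not an induced subhypergraph in the sense of Theorem~\ref{interlace} (which strips all edges containing a deleted vertex set), so one must verify the submatrix identification directly rather than citing that theorem; once that is done the proof concludes in one line.
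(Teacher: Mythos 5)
Your proposal is correct and follows essentially the same route as the paper: the trivial first two inequalities, the Hoffman bound for $\chi(G)$, and the observation that $A(G)$ is a principal submatrix of $\mathbf{U}(H)$ so that interlacing gives $\lambda_k(H)\leq\lambda_n(G)<0$ and hence $|\lambda_n(G)|\leq|\lambda_k(H)|$. Your only addition is the explicit entrywise verification of the submatrix identification (and the remark that Theorem~\ref{interlace} does not apply verbatim since $G$ is not an induced subhypergraph), which the paper simply asserts without detail.
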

\begin{proof}
	The first inequality is straight forward. Since, a weak coloring of $H$ gives a proper coloring of $G$, we have $\chi(G)\leq \chi(H)$. Notice that $A(G)$ is a principal submatrix of $\mathbf{U}(H)$. Thus, by Interlacing Theorem for symmetric matrices, we have $\lambda_n(G)\geq \lambda_{k}(H)$. Since $H$ has no loops and $G$ has at least one edge, it follows that $\lambda_k(H)$ and $\lambda_n(G)$ are negative and so $|\lambda_n(G)|\leq |\lambda_{k}(H)|$. It is known from~\cite[Theorem 3.10.7]{cvetko} that if $G$ is a simple graph with $n$ vertices and with at least one edge, then $\chi(G)\geq1+\displaystyle\frac{\lambda_1(G)}{|\lambda_n(G)|}$. Hence the remaining two inequalities follows from these facts.
\end{proof}


\subsection{Independence number, complete-clique number and unified eigenvalues of a hypergraph}

\begin{defn}\normalfont
	If $H$ is a hypergraph with $e$-index $k$, then we define the \emph{average unified degree} (or \emph{mean unified degree}) of $H$, denoted by $\bm\bar{d}(H)$, as		\[\bm\bar{d}(H)=\frac{1}{k}\left(\underset{S\in I(H)}{\sum}d(S)\right).\]
\end{defn}
\begin{thm}
	If $H$ is a hypergraph having no included edges, then 
	$\lambda_1(H)\geq\bm\bar{d}(H)$. 
\end{thm}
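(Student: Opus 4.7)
The plan is to recycle the Rayleigh-quotient step that was already used inside the proof of Theorem~\ref{bound t}, and then observe that the hypothesis ``no included edges'' collapses the error term $\partial(H)$ to zero.

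Concretely, first I would recall the extremal representation of the largest eigenvalue of a real symmetric matrix,
\begin{align*}
\lambda_1(H) \;=\; \underset{\|x\|\neq 0}{\max}\frac{x^{T}\mathbf{U}(H)x}{x^{T}x},
\end{align*}
and evaluate the Rayleigh quotient at the all-ones test vector $x = J_{k\times 1}$. This gives the lower bound
\begin{align*}
\lambda_1(H) \;\geq\; \frac{J^{T}_{k\times 1}\,\mathbf{U}(H)\,J_{k\times 1}}{J^{T}_{k\times 1}J_{k\times 1}}
\;=\; \frac{1}{k}\sum_{i,j}\mathbf{U}(H)_{S_iS_j},
\end{align*}
i.e.\ $1/k$ times the sum of all entries of the unified matrix.

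Next I would identify this row-sum total. Exactly as in the derivation inside Theorem~\ref{bound t} (which in turn uses equation~\eqref{EQ3} from Lemma~4.1), the total entry sum of $\mathbf{U}(H)$ equals
\begin{align*}
\sum_{S\in I(H)} d_H(S) \;-\; \partial(H),
\end{align*}
since each $2$-partition $\{S,e\setminus S\}$ of a non-loop edge contributes twice to the off-diagonal entries (and is counted twice in $\sum_S d_H(S)$), while the loop contributions match the diagonal entries, the discrepancy being precisely the over-count $\partial(H)$ coming from included edges. Combining,
\begin{align*}
\lambda_1(H) \;\geq\; \frac{1}{k}\Bigl(\sum_{S\in I(H)} d_H(S) \;-\; \partial(H)\Bigr).
\end{align*}

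Finally, the hypothesis enters: since $H$ has no included edges, $\partial(H)=0$, so the right-hand side is exactly
$\frac{1}{k}\sum_{S\in I(H)} d_H(S) = \bar{d}(H)$, which is the required inequality. There is no real obstacle here; the only thing to be careful about is citing the identification of the entry sum with $\sum_{S} d_H(S) - \partial(H)$ (which is implicit in the proof of Theorem~\ref{bound t} and in Lemma~4.1(iii)) rather than re-deriving it from scratch.
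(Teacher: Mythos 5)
Your proposal is correct and follows essentially the same route as the paper: the Rayleigh quotient of $\mathbf{U}(H)$ at the all-ones vector $J_{k\times 1}$, together with the identification of the total entry sum as $\sum_{S\in I(H)} d_H(S)-\partial(H)$, which reduces to $k\,\bm\bar{d}(H)$ when there are no included edges. The paper simply states the entry-sum identity directly under the hypothesis rather than passing through $\partial(H)$ and setting it to zero, but the argument is the same.
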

\begin{proof}	
	Let the $e$-index of $H$ be $k$, and let $x$ be a column vector in $\mathbb{R}^k$ with the usual Euclidean norm. We consider the extremal representation,
	\begin{align*}
		\lambda_1(H)=\underset{||x||=1}{\max}\bigl\{x^T\mathbf{U}(H)x\bigr\}=\underset{||x||\neq0}{\max}\biggl\{\frac{x^T\mathbf{U}(H)x}{x^Tx}\biggr\}.
	\end{align*} 
	Thus,\begin{align*}
		\lambda_1(H)&\geq \frac{J^T_{k\times 1} \mathbf{U}(H) J_{k\times 1}}{J_{k\times 1}^T J_{k\times 1}}.
	\end{align*}
	Since $H$ has no included edges,
	$J^T_{k\times 1} \mathbf{U}(H) J_{k\times 1}=\underset{S\in I(H)}{\sum}d_H(S)$. 
	This completes the proof.
\end{proof}
A subset of vertices of a hypergraph $H$ which contains no edge of $H$ is called an \textit{independent set}.
The \textit{independence number} of $H$, denoted by $\alpha(H)$, is the cardinality of the largest independent set in $H$~(c.f.~\cite[p. 151]{voloshin}).
\begin{thm}
	Let $H$ be a simple hypergraph with $e$-index $k$. Let $\ell^+$ and $\ell^-$ denote the number of positive and negative eigenvalues of $\mathbf{U}(H)$, respectively. Then, $\alpha(H)\leq\min\{k-\ell^+, k-\ell^-\}.$ 
\end{thm}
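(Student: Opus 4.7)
The plan is to apply Cauchy interlacing to a carefully chosen principal submatrix of $\mathbf{U}(H)$ that is forced to be the zero matrix by the independence of a maximum independent set.

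Let $W\subseteq V(H)$ be an independent set with $|W|=\alpha(H)$, and consider the set of singletons $\mathcal{W}:=\{\{v\}\mid v\in W\}\subseteq I(H)$. I would first argue that the principal submatrix $M:=\mathbf{U}(H)[\mathcal{W}\mid \mathcal{W}]$ is the zero matrix. For the diagonal entry at $(\{v\},\{v\})$: by the definition of $\mathbf{U}(H)$, this equals $m(\{v\})$ if $\{v\}\in E(H)$ and zero otherwise; since $H$ is simple it has no loops, so every diagonal entry of $M$ is zero. For an off-diagonal entry at $(\{u\},\{v\})$ with $u\ne v$: this is nonzero precisely when $\{u\}$ and $\{v\}$ form a $2$-partition of some edge $e\in E(H)$, which forces $e=\{u,v\}$. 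But $u,v\in W$ and $W$ is independent, so no subset of $W$ is an edge of $H$; in particular $\{u,v\}\notin E(H)$. Hence every off-diagonal entry of $M$ is zero as well, and $M=\mathbf{0}_{\alpha(H)\times\alpha(H)}$.

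Next I would invoke the Cauchy interlacing theorem for symmetric matrices (the same tool used in Theorem~\ref{interlace}) applied to the principal submatrix $M$ of $\mathbf{U}(H)$. Writing $m=\alpha(H)$ and $k$ for the $e$-index of $H$, interlacing gives
\begin{equation*}
\lambda_i(\mathbf{U}(H))\;\geq\;\lambda_i(M)\;\geq\;\lambda_{k-m+i}(\mathbf{U}(H)),\qquad i=1,2,\dots,m.
\end{equation*}
Since every eigenvalue of $M$ equals $0$, this reads $\lambda_i(\mathbf{U}(H))\geq 0$ for $i=1,\dots,\alpha(H)$ and $\lambda_{k-\alpha(H)+i}(\mathbf{U}(H))\leq 0$ for $i=1,\dots,\alpha(H)$. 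In other words, $\mathbf{U}(H)$ has at least $\alpha(H)$ non-negative eigenvalues and at least $\alpha(H)$ non-positive eigenvalues.

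Finally, from $\lambda_{k-\alpha(H)+1}(\mathbf{U}(H))\leq 0$ we conclude that every positive eigenvalue lies among $\lambda_1,\dots,\lambda_{k-\alpha(H)}$, whence $\ell^+\leq k-\alpha(H)$, i.e., $\alpha(H)\leq k-\ell^+$. Analogously, $\lambda_{\alpha(H)}(\mathbf{U}(H))\geq 0$ forces every negative eigenvalue to lie among $\lambda_{\alpha(H)+1},\dots,\lambda_k$, giving $\ell^-\leq k-\alpha(H)$, i.e., $\alpha(H)\leq k-\ell^-$. Combining the two bounds yields $\alpha(H)\leq\min\{k-\ell^+,\,k-\ell^-\}$. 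The only non-routine step is the zero-submatrix observation; once that is in place the rest is a direct application of interlacing, and I anticipate no further obstacles.
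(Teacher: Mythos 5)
Your proof is correct. The zero-submatrix observation is sound: simplicity rules out loops (so the diagonal entries vanish), and an off-diagonal entry indexed by $\{u\},\{v\}$ with $u,v\in W$ could only be nonzero if $\{u,v\}$ were an edge, which independence of $W$ forbids; the interlacing step and the extraction of $\ell^+\leq k-\alpha(H)$ and $\ell^-\leq k-\alpha(H)$ are then routine and correctly executed. The paper takes a shorter route: it observes that an independent set $W$ of $H$ gives the independent set $\{\{v\}\mid v\in W\}$ of the associated graph $G_H$, so $\alpha(H)\leq\alpha(G_H)$, and then invokes the known inertia bound for graphs (Cvetkovi\'c's theorem) applied to $A(G_H)=\mathbf{U}(H)$. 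The two arguments rest on the same mathematical core --- the cited graph theorem is itself proved by interlacing against a zero principal submatrix --- but yours unpacks that citation and works directly with $\mathbf{U}(H)$, which buys a self-contained argument that does not pass through $G_H$ at all, while the paper's version buys brevity and consistency with its running strategy of transferring graph results through the identification $\mathbf{U}(H)=A(G_H)$. No gaps.
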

\begin{proof}
	Notice that if $S$ is an independent set of $H$, then $S'=\{\{v\}~|~v\in S\}$ is an independent set in $G_H$. Hence, $\alpha(H)\leq\alpha(G_H)$. Since $\mathbf{U}(H)=A(G_H)$, the proof follows from~\cite[Theorem 3.10.1]{cvetko}: ``Let $G$ be a simple graph on $n$ vertices and let $\ell^+$ and $\ell^-$ denote the number of positive and negative eigenvalues of $A(G)$, respectively. Then $\alpha(G)\leq\min\{n-\ell^+, n-\ell^-\}".$ 
\end{proof}
\begin{defn}\normalfont
	Let $H$ be a hypergraph. By a \emph{complete-clique} in $H$, we mean a complete subhypergraph of $H$. The \emph{complete-clique number} of $H$, denoted by $\omega(H)$, is the number of vertices in a largest complete-clique of $H$ if $H$ contains a complete-clique; $0$, otherwise.  
\end{defn}
The proof of the following result is analogous to that of Theorem~3.10.3 of~\cite{cvetko}.
\begin{thm}
	Let $H$ be a hypergraph without multiple edges. Let $n^-, n^0, n^+$ denote the number of eigenvalues of $\mathbf{U}(H)$ which are less than, equal to, or greater than $0$, respectively. Then, $\omega(H)\leq \min\{n^-+n^0+1,~ n^0+n^+,~ \lambda_1(H)\}.$
\end{thm}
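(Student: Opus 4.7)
The plan is to exhibit a principal submatrix of $\mathbf{U}(H)$ that is easy to diagonalize and then invoke Cauchy interlacing, mirroring the proof for graphs.

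If $\omega(H) = 0$ the bound is vacuous, so I would assume $\omega := \omega(H) \geq 1$ and fix a complete subhypergraph $K$ of $H$ on a vertex set $V(K) = \{v_1, \dots, v_\omega\}$. The singletons $\{v_1\}, \dots, \{v_\omega\}$ all lie in $I(H)$, so they index a symmetric principal submatrix $M$ of $\mathbf{U}(H)$ of order $\omega$. The crucial observation I want to verify is that $M = J_\omega$, the all-ones matrix. For the diagonal: completeness of $K$ forces each $\{v_i\}$ to be an edge of $K$, hence of $H$, and the no-multiple-edges hypothesis gives $m(\{v_i\}) = 1$, so every diagonal entry of $M$ equals $1$. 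For $i \neq j$, the edge $\{v_i, v_j\} \in E(K) \subseteq E(H)$ admits $\{\{v_i\}, \{v_j\}\}$ as its unique $2$-partition and has multiplicity $1$, so the off-diagonal entries of $M$ are also $1$.

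Next, recalling that $\sigma(J_\omega) = \{\omega, 0, 0, \dots, 0\}$ with $\omega$ appearing once and $0$ appearing $\omega - 1$ times, I would apply the Interlacing Theorem for symmetric matrices (cited already in the paper as \cite[Theorem~1.3.11]{cvetko}) to $M$ as a principal submatrix of $\mathbf{U}(H)$. Writing $k$ for the $e$-index of $H$ and $\mu_1 \geq \dots \geq \mu_\omega$ for the eigenvalues of $M$, this gives
$$\lambda_i(H) \;\geq\; \mu_i \;\geq\; \lambda_{k-\omega+i}(H), \qquad i = 1, 2, \dots, \omega.$$
Taking $i = 1$ yields $\lambda_1(H) \geq \mu_1 = \omega$, which is the third inequality in the minimum. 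For $i = 2, \dots, \omega$, the left half of the interlacing gives $\lambda_i(H) \geq \mu_i = 0$, exhibiting $\omega$ nonnegative eigenvalues of $\mathbf{U}(H)$ and hence $\omega \leq n^0 + n^+$. Symmetrically, the right half gives $\lambda_{k - \omega + i}(H) \leq 0$ for $i = 2, \dots, \omega$, producing $\omega - 1$ nonpositive eigenvalues and hence $\omega \leq n^- + n^0 + 1$.

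The only place that requires any care is the identification $M = J_\omega$, which is really just unpacking the definition of $\mathbf{U}(H)$ on singleton indices and using that both loops $\{v_i\}$ and two-element edges $\{v_i, v_j\}$ belong to every complete subhypergraph on $V(K)$; everything after that is a direct transcription of the classical clique/interlacing argument for $A(G)$. I therefore do not expect a substantial obstacle beyond bookkeeping, and would write out the three inequalities in sequence and take their minimum to finish.
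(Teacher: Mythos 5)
Your proof is correct and takes essentially the same route as the paper: both arguments identify the principal submatrix of $\mathbf{U}(H)$ indexed by the singletons of a complete-clique on $\omega(H)$ vertices with the all-ones matrix $J_{\omega}$ (your verification of this identification is just a more detailed version of the paper's one-line assertion that it follows from the absence of multiple edges) and then extract the three bounds from Cauchy interlacing against the spectrum of $J_{\omega}$. The only cosmetic difference is in the degenerate case, where the paper argues via $tr(\mathbf{U}(H))=0$ while you simply note the bound is trivial; both dispositions are fine.
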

\begin{proof}
	Let the $e$-index of $H$ be $k$. Suppose $H$ has no complete-clique, then $tr(U(H))=0$ and so $\lambda_1(H)>0$. Thus the result is true in this case. Suppose $H$ has a complete-clique on $p$ vertices. As $H$ has no multiple edges, it follows that $J_{p\times p}$ is a principal submatrix of $\mathbf{U}(H)$.  By Interlacing Theorem for $\mathbf{U}(H)$ and $J_{p\times p}$, we get $\lambda_{k-p+i}(H)\leq \lambda_i(J_{p\times p})\leq \lambda_i(H)$ for $i=1,2,\dots,k$. Since the eigenvalues of $J_{p\times p}$ are $p$ with multiplicity $1$, and $0$ with multiplicity $p-1$, we have
	\begin{eqnarray}
		\lambda_{k-p+1}(H)\leq p\leq \lambda_1(H),\label{clique eqn 1}& \\
		\lambda_{k-p+i}(H)\leq 0\leq \lambda_i(H)&\text{for}~i=2,3,\dots,p.\label{clique eqn 2}
	\end{eqnarray}
	From the L.H.S of~\eqref{clique eqn 2}, we have
	\begin{equation}
		p\leq n^0+n^-+1.\label{clique eqn 3}
	\end{equation}
	From the R.H.S of~\eqref{clique eqn 1} and \eqref{clique eqn 2}, we obtain
	\begin{equation}
		p\leq n^0+n^+.\label{clique eqn 4}
	\end{equation}
	Hence the result follows from~\eqref{clique eqn 3}, \eqref{clique eqn 4} and from the second inequality of~\eqref{clique eqn 1}.
\end{proof}


\section*{Conclusion}

The unified matrix associated with a hypergraph provides a unified approach for linking spectral hypergraph theory with  the spectra of the adjacency matrices of graphs. In this context,  we introduced certain hypergraph structures and invariants, such as exact walk, exact path, exact cycle, unified path, unified cycle, exactly connectedness, exact distance, exact girth, and exact diameter, and related them to the eigenvalues of the unified matrix. Although the relationships between these structures and invariants, and the spectrum of the unified matrix of the hypergraph have been established, further research is needed to explore their properties from a non-spectral hypergraph theoretical viewpoint. Moreover, this approach allows for the extension of various results and properties of graphs, typically expressed using adjacency matrices, to hypergraphs using the unified matrix.


\end{document}